\documentclass [12pt]{amsart}

\frenchspacing

\sloppy





\usepackage[top=100pt,bottom=60pt,left=96pt,right=92pt]{geometry}
\usepackage[utf8]{inputenc}
\usepackage[T1]{fontenc}
\usepackage[catalan,german,spanish,dutch,british]{babel}
\usepackage{lmodern}
\usepackage[pdftex]{graphicx}
\usepackage{tikz}
\usetikzlibrary{matrix}
\usepackage{pinlabel}
\usepackage{faktor} 

\usepackage{mathtools} 

\usepackage[mathscr]{eucal}
\usepackage{amsmath}
\usepackage{bm} 
\usepackage{amsthm}
\usepackage{amssymb}
\usepackage{amsfonts} 
\usepackage{mathrsfs}
\usepackage{amscd}
\usepackage{enumitem} 
\usepackage{xcolor}
\usepackage{layout}
\usepackage{stmaryrd} 
\usepackage{float} 
\usepackage{longtable}
\usepackage{txfonts} 
\usepackage{setspace} 
\usepackage{subcaption} 
\captionsetup[subfigure]{labelfont=rm}

\usepackage{verbatim}
\usepackage{nicefrac}
\usepackage{textcomp}

\usepackage[normalem]{ulem} 
\usepackage{tikz-cd} 

\usepackage{chngcntr} 

\usepackage{hyperref} 



\hypersetup{
    colorlinks=true,
    linkcolor=blue,
    filecolor=blue,      
    urlcolor=blue,
    citecolor = blue,
    }
 
\urlstyle{same}


\numberwithin{equation}{section}
\counterwithin{figure}{section}

\newtheorem{theorem}{Theorem}[section]
\newtheorem{proposition}[theorem]{Proposition}
\newtheorem{corollary}[theorem]{Corollary}
\newtheorem{lemma}[theorem]{Lemma}

\newtheorem{definition}[theorem]{Definition}
\newtheorem{remark}[theorem]{Remark}
\newtheorem{example}[theorem]{Example}

\theoremstyle{plain}




\newcommand{\purge}[1]{} 


\newcommand{\vungoc}{V\~u Ng\d{o}c}



\def\epsilon{\varepsilon}
\def\phi{\varphi}
\def\theta{\vartheta}

\newcommand{\al}{\alpha}
\newcommand{\be}{\beta}
\newcommand{\ga}{\gamma}

\newcommand{\ka}{\kappa}
\newcommand{\lam}{\lambda}

\newcommand{\si}{\sigma}

\newcommand{\om}{\omega}


\newcommand{\Ga}{\Gamma}
\newcommand{\De}{\Delta}







\newcommand{\gti}{{\ti{g}}}

\newcommand{\pti}{{\ti{p}}}

\newcommand{\zbar}{\bar{z}}




\def\C{{\mathbb C}}
\def\D{{\mathbb D}}

\def\F{{\mathbb F}}

\def\N{{\mathbb N}}

\def\R{{\mathbb R}}
\def\mbS{{\mathbb S}} 
\def\T{{\mathbb T}}


\newcommand{\mcC}{\mathcal C}

\newcommand{\mcF}{\mathcal F}
\newcommand{\mcG}{\mathcal G}
\newcommand{\mcH}{\mathcal H}

\newcommand{\mcJ}{\mathcal J}

\newcommand{\mcL}{\mathcal L}

\newcommand{\mcS}{\mathcal S}

\newcommand{\mcX}{\mathcal X}








\newcommand{\IFF}{\Leftrightarrow}

\newcommand{\ti}{\tilde}



\DeclarePairedDelimiter{\abs}{\lvert}{\rvert}






\DeclareMathOperator{\Hyp}{Hyp}

\DeclareMathOperator{\Id}{Id}

\DeclareMathOperator{\Mod}{mod}

\DeclareMathOperator{\Span}{Span}

\DeclareMathOperator{\tr}{tr}




%
\def\slashii#1{\setbox0=\hbox{$#1$}             
\dimen0=\wd0                                 
\setbox1=\hbox{\sl/} \dimen1=\wd1            
\ifdim\dimen0>\dimen1                        
\rlap{\hbox to \dimen0{\hfil\sl/\hfil}}   
#1                                        
\else                                        
\rlap{\hbox to \dimen1{\hfil$#1$\hfil}}   
\hbox{\sl/}                               
\fi}                                         %
%
\def\slashiii#1{\setbox0=\hbox{$#1$}#1\hskip-\wd0\hbox to\wd0{\hss\sl/\/\hss}}
%






\allowdisplaybreaks


\begin{document}

\title{Creating hyperbolic-regular singularities in the presence of an $\mathbb S^1$-symmetry}

\author{Yannick Gullentops $\&$ Sonja Hohloch}

\date{\today}

\begin{abstract}
On a 4-dimensional compact symplectic manifold, we study how suitable perturbations of a toric system to a family of completely integrable systems with $\mbS^1$-symmetry lead to various hyperbolic-regular singularities. We compute and visualise associated phenomena like flaps, swallowtails, and $k$-stacked tori for $k \in \{2, 3, 4\}$ and give an upper bound for $k$ in our family of systems.
\end{abstract}

\maketitle

\tableofcontents


\section{Introduction}

Hamiltonian systems are an important class of dynamical systems since they appear naturally in many different contexts from mathematics over physics to chemistry and biology. 
Hamiltonian systems always have at least one conservation law, namely conservation of energy, but there are surprisingly many systems that display additional conservation laws and/or symmetries.
Intuitively a Hamiltonian systems is `completely integrable' if it has a `maximal number' of `conserved quantities/symmetries' (see Definition \ref{def:integrable} for the proper definition specialised to dimension four). 

In this paper, we focus on a certain family of integrable systems on a $4$-dimensional manifold that has an underlying $\mbS^1$-symmetry. This type of symmetry appears in many natural systems, for instance the coupled spin oscillator, the coupled angular momenta, the Lagrange, Euler, and Kovalevskaya
spinning tops, and the spherical pendulum. Over the course of the past decade, there has been a lot of progress in various aspects of completely intergable systems with $\mbS^1$-symmetry on $4$-dimensional manifolds, for instance, in symplectic classification (Pelayo $\&$ \vungoc\ \cite{Pelayo2009ConstructingIS,Pelayo2009SemitoricIS}, Palmer $\&$ Pelayo $\&$ Tang \cite{palmpelaytangsemitoric}), computation of invariants (Alonso $\&$ Dullin $\&$ Hohloch \cite{ALONSO2019131,Alonso_2019}, Alonso $\&$ Hohloch \cite{Alonso_2021}), study of specific examples (Hohloch $\&$ Palmer \cite{HolAndPalmFamily}, De Meulenaere $\&$ Hohloch \cite{meulenaere2019family}), bifurcation behavior (Le Floch $\&$ Palmer \cite{floch2019semitoric}), quantum aspects (Le Floch $\&$ \vungoc\ \cite{floch2021inverse}), extension of $\mbS^1$-actions (Hohloch $\&$ Palmer \cite{hohloch2021extending}), and so on.

So far, there are symplectic classifications in the following situations of interest for this paper: 
\begin{itemize}
 \item 
 for toric systems (see Definition \ref{def:toric}), it was established by Delzant (cf.\ Theorem \ref{theo:delzant}),
 \item
for semitoric systems (see Definition \ref{def:semitoric}), it was achieved by Pelayo $\&$ \vungoc\ \cite{Pelayo2009ConstructingIS,Pelayo2009SemitoricIS} and Palmer $\&$ Pelayo $\&$ Tang \cite{palmpelaytangsemitoric}.
\end{itemize}

\begin{figure}[h]
    \centering
\begin{subfigure}[t]{.48\textwidth}
\centering
     \includegraphics[scale = .3]{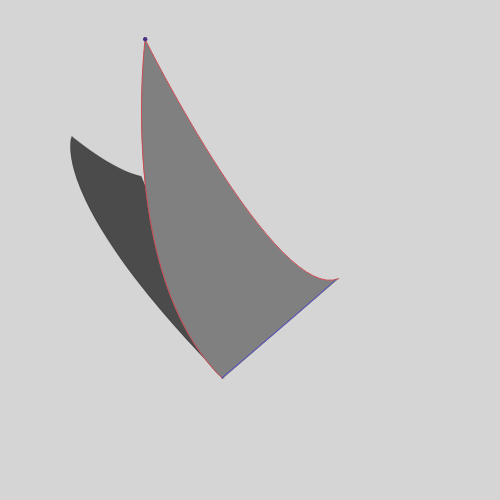}
     \caption{A flap displayed by an unfolded bifurcation diagram. The flap branches off the darker grayish background along the red line.}
     \label{fig:unfolded flap}
  \end{subfigure}\quad
\begin{subfigure}[t]{.48\textwidth}
\centering    
    \includegraphics[scale = .2]{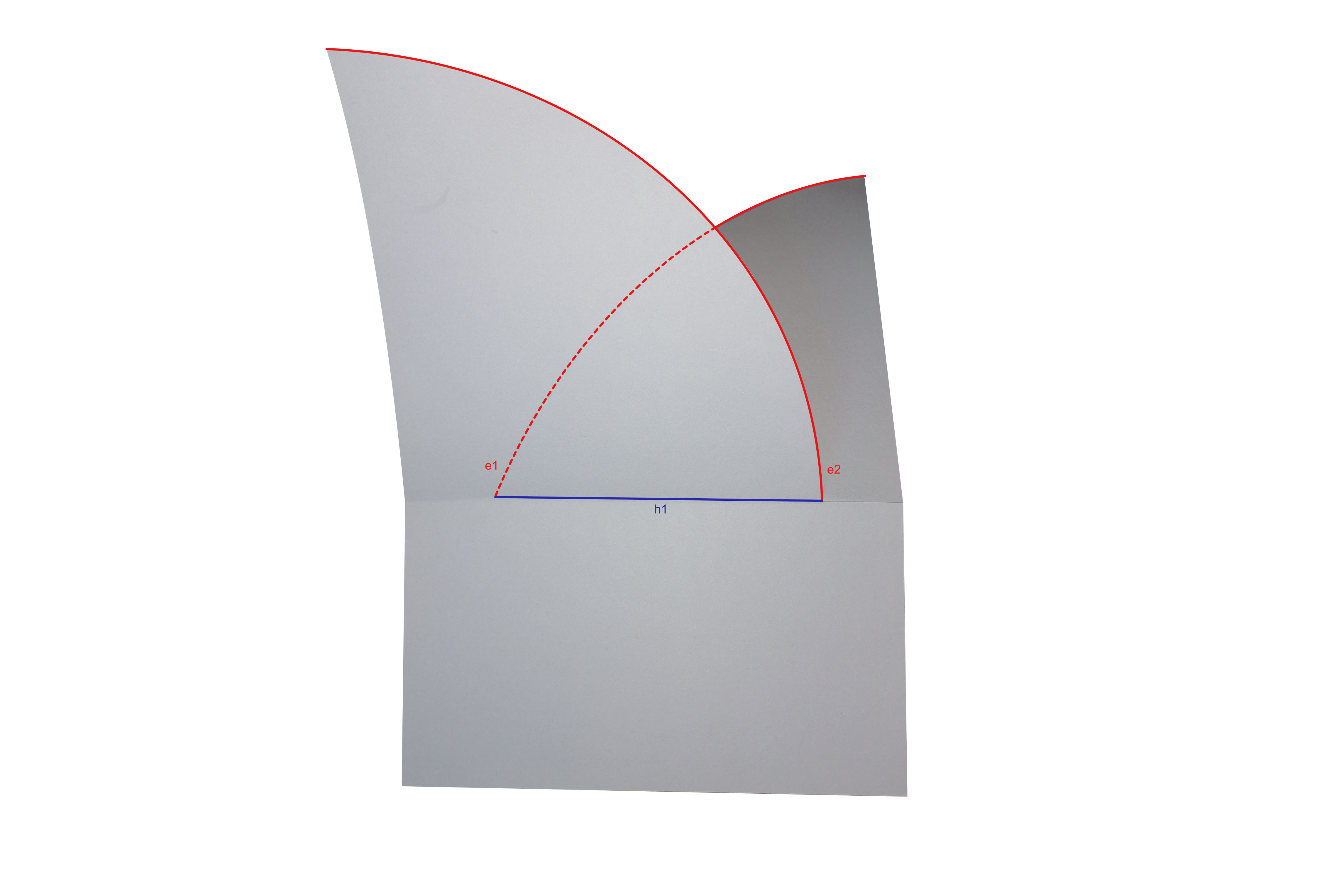}
    \caption{A swallowtail displayed by an unfolded bifurcation diagram.}
    \label{fig:swallowtailunf}
\end{subfigure}
\caption{Flaps and Swallowtails: the red line consists of hyperbolic-regular values, the green lines of elliptic-regular values and the dark dot is an elliptic-elliptic value.}
\label{fig:flapAndSwallow}
\end{figure}

Neither of these two classes of systems admit singularities with hyperbolic components. Unfortunately, there is not yet any symplectic classification for more general classes up to our knowledge.
The aim of the present paper is to gain more understanding of `semitoric systems with hyperbolic singularities' in order to prepare the way towards a symplectic classification in the future. This is done by suitable perturbations of the toric system underlying the semitoric system studied by De Meulenare $\&$ Hohloch \cite{meulenaere2019family}. This provides us with explicit examples of hyperbolic phenomena like flaps and swallowtails, see Figure \ref{fig:flapAndSwallow}.

\begin{figure}[hb]
    \centering
\begin{subfigure}[t]{.48\textwidth}
\centering
   \includegraphics[scale = .55]{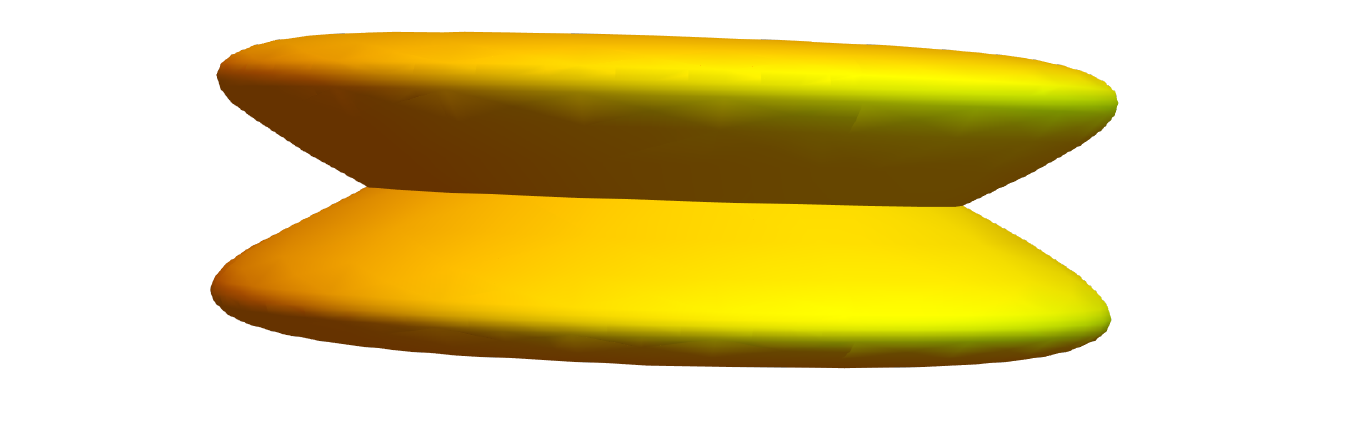}
    \caption{A $2$-stacked torus.}
    \label{fig:doubleTorus}
 \end{subfigure}\quad
\begin{subfigure}[t]{.48\textwidth}
\centering
\includegraphics[scale = .35]{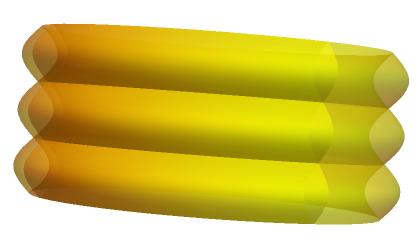}
\caption{A $3$-stacked torus.}
 \label{fig:tritorus}
\end{subfigure}
\caption{Hyperbolic-regular fibres.
}
\label{fig:hypRegFibres}
\end{figure}

Moreover, we can visualise the occuring hyperbolic-regular fibres, finding explicit examples of $k$-stacked tori for $k \in \{2, 3,4\}$, see Figures \ref{fig:doubleTorus}, \ref{fig:tritorus},  \ref{fig:levelSetDutorus}, \ref{fig:levelSetTritorus}, and \ref{fig:quadtorus}.

Now let us be more precise: Consider the octagon $\De$ in Figure \ref{fig:octagon}. Using Delzant's construction \cite{Delzant1988HamiltoniensPE}, De Meulenare $\&$ Hohloch \cite{meulenaere2019family} built the toric system associated with this octagon on a 4-dimensional, compact, connected, symplectic manifold $(M, \om):= (M_\De, \om_\De)$ by means of symplectic reduction by a Hamiltonian $\T^6$-action of the 10-dimensional preimage of a certain map from $\C^8 \to \R^6$ (more details are given in Section \ref{sec:theoctsyst} which summarises De Meulenare $\&$ Hohloch \cite{meulenaere2019family}). Points in $(M, \om)$ are written as equivalence classes of the form $[z]=[z_1, \dots, z_8]$ where $z_k = x_k + i y_k \in \C$ for $1 \leq k \leq 8$. The momentum map of the toric system on $(M, \om)$ is given by 
$$
F=(J, H): (M, \om) \to \R^2 \quad \mbox{with} \quad J([z_1, \dots, z_8])= \frac{1}{2}\abs{z_1}^2 \quad \mbox{and} \quad H([z_1, \dots, z_8])= \frac{1}{2} \abs{z_3}^2
$$
and satisfies $F(M)= \De$. The induced Hamiltonian $2$-torus action is effective.

\begin{figure}[h]
\centering
\input{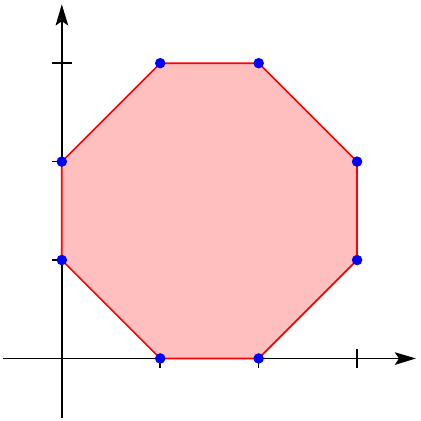_t}
\caption{The octagon $\De \subset \R^2$ with vertices $(0,2)$, $ (0,1)$, $ (1, 0)$, $ (2, 0)$, $ (3, 1)$, $ (3, 2)$, $(2, 3)$ and $ (1, 3)$ is a Delzant polytope.}
\label{fig:octagon}
\end{figure}

We now perturb the second component of the system $F=(J, H)$. To this aim, we define the function $H_t: M \to \R$ via
\begin{align*}
    H_t:= H_{(t_1,t_2,t_3,t_4)} := (1-2 t_1)H + \sum_{j = 1}^4 t_j\gamma_j 
\end{align*}
where $t:=(t_1, t_2, t_3, t_4) \in \R^4$ and $\ga_1, \dots, \ga_4$ are given by
\begin{align*}
    \gamma_1([z])  := \frac{1}{100} \ \bigl(\overline{z_2z_3z_4}z_6z_7z_8+z_2z_3z_4\overline{z_6z_7z_8}\bigr) = \frac{1}{50} \ \Re(\overline{z_2z_3z_4}z_6z_7z_8) 
\end{align*}    
where $\Re$ denotes the real part of a complex number and
 \begin{align*}   
     \gamma_{2}([z])  : =\frac{1}{50} \ |z_5|^4|z_4|^4, \quad 
    \gamma_{3}([z])  : = \frac{1}{50} \ |z_4|^4|z_7|^4,\quad 
    \gamma_{4}([z])    : = \frac{1}{100} \ |z_5|^4|z_7|^4.
\end{align*}
The term $\gamma_1$ comes from the perturbation performed in De Meulenaere $\&$ Hohloch \cite{meulenaere2019family}.
The terms $\ga_2$, $\ga_3$, $\ga_4$ are inspired by the work of Palmer $\&$ Le Floch \cite{floch2019semitoric}. 

\begin{theorem}
 For all $t \in \R^4$, the system $F_t:=(J, H_t): (M, \om) \to \R^2$ is completely integrable and has an effective Hamiltonian $\mbS^1$-action generated by $J$.  
\end{theorem}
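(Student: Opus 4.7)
The plan is to verify the three defining conditions of complete integrability in dimension four, as laid out in Definition \ref{def:integrable} (smoothness, Poisson commutation, and functional independence of $J$ and $H_t$ on a dense open subset), and then the effectiveness of the $\mbS^1$-action generated by $J$. The effectiveness is inherited directly from the toric system $F=(J,H)$ of Section \ref{sec:theoctsyst}: since $J$ is unchanged by the perturbation and its $\mbS^1$-action is a subaction of the effective toric $\T^2$-action coming from Delzant's construction, it remains effective.

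Smoothness of $H_t$ reduces to checking that each perturbation $\gamma_j$ descends to a smooth function on the reduced space $M$. Each $\gamma_j$ is polynomial in $z_k,\bar z_k$ and manifestly invariant under the $\T^6$-action used in the Delzant reduction of the octagon $\De$: the factors $|z_k|^4$ appearing in $\gamma_2,\gamma_3,\gamma_4$ are plainly invariant, and the invariance of the monomial $\bar z_2\bar z_3\bar z_4 z_6 z_7 z_8$ (together with its conjugate) in $\gamma_1$ is the same weight calculation already carried out in \cite{meulenaere2019family}. Poisson commutation then follows by bilinearity from $\{J,H_t\}=(1-2t_1)\{J,H\}+\sum_j t_j\{J,\gamma_j\}$: the first bracket vanishes since $F$ is toric, and each $\{J,\gamma_j\}=0$ because the flow of $X_J$ only rotates the coordinate $z_1$ (as $J=\tfrac{1}{2}|z_1|^2$), under which each $\gamma_j$ (a function of $z_k$ for $k\neq 1$ only) is invariant.

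The main obstacle is proving functional independence: for every $t\in\R^4$, the closed set $Z_t:=\{p\in M\mid (dJ\wedge dH_t)_p=0\}$ must have empty interior. Since $J$, $H$ and all $\gamma_j$ are real analytic in the Delzant coordinates, $Z_t$ is a real analytic subset of $M$, so it is either the whole of $M$ or nowhere dense. Equivalently, using $\{J,H_t\}=0$ and connectedness of the regular locus of $J$, $Z_t=M$ would force $H_t$ to factor through $J$, i.e.\ $H_t=\phi(J)$ for some function $\phi$. To rule this out, I would exhibit, for each $t$, an explicit point $p\in M$ at which $(dJ\wedge dH_t)_p\neq 0$: choosing $p$ with enough of $z_4,z_5,z_7$ vanishing causes most of the $\gamma_j$ together with their differentials to vanish, reducing $dH_t$ at $p$ to $(1-2t_1)\,dH$ plus at most one $d\gamma_j$ term. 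The case $t_1\neq\tfrac{1}{2}$ then follows directly from non-vanishing of $dJ\wedge dH$ on the toric regular locus of $F$, while the borderline case $t_1=\tfrac{1}{2}$ is treated separately by picking a point where the $d\gamma_j$ span a subspace transverse to $\R\,dJ$ in $T^*_pM$, which can be verified by a direct Delzant-coordinate calculation with $z_4,z_5,z_7$ all nonzero.
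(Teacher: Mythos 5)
Your handling of descent, Poisson commutation, and effectiveness is correct and essentially the paper's own route (Theorem \ref{th:toricOctagonSys} and Proposition \ref{prop:gammacommute}), and your analyticity reduction for independence is also the paper's starting point. The genuine gap is in the witness-point scheme for functional independence (the analogue of Proposition \ref{prop:gammadependend}): the points you propose do not exist where you need them. On $M$ at most two coordinates can vanish at a point, and only cyclically adjacent ones (every point lies in some $U_\nu$, where only $z_\nu,z_{\nu+1}$ may vanish). To kill $d\gamma_2,d\gamma_3,d\gamma_4$ simultaneously you would need two of $z_4,z_5,z_7$ to vanish, and the only admissible pair is $z_4=z_5=0$, which is the vertex preimage $\phi_4(0,0)$ over $(3,1)$, where $dJ=dH=0$; already $z_5=0$ alone forces $J=3$ and hence $dJ=0$ (manifold equation $|z_1|^2+|z_5|^2=6$, cf.\ Theorem \ref{the:zerocor}). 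With only $z_4=0$ (or only $z_7=0$) two perturbation terms survive, namely $t_1d\gamma_1$ (only one of the six factors of $\gamma_1$ vanishes) and $t_4d\gamma_4$ (resp.\ $t_2d\gamma_2$), so the promised reduction to ``$(1-2t_1)dH$ plus at most one $d\gamma_j$'' is unavailable at any point with $dJ\neq 0$. Worse, any point with some $z_k=0$ lies over the boundary of the octagon, i.e.\ in the singular locus of the toric system, where $dJ\wedge dH=0$; so the appeal to ``non-vanishing of $dJ\wedge dH$ on the toric regular locus'' is vacuous exactly at the points your construction produces.

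Even granting a reduction with one surviving term at a toric-regular point, the conclusion would not be ``direct'': in the charts, $H$, $\gamma_2$, $\gamma_3$, $\gamma_4$ depend only on the two radii $|z_\nu|^2,|z_{\nu+1}|^2$, so $dH$ and $d\gamma_2,d\gamma_3,d\gamma_4$ all lie in a fixed $2$-plane containing $dJ$, and for each fixed $p$ the covector $(1-2t_1)dH(p)+\sum_{j\geq 2}t_jd\gamma_j(p)$ is proportional to $dJ(p)$ along a nontrivial affine set of parameters. So one needs either a genuinely $t$-adapted choice of points or a finite family of points together with a proof that no parameter value is bad for all of them; the latter is exactly what the paper does: it forms the analytic $2\times 2$ minor $f_{14}$ of $X^{J\circ\phi_1}$, $X^{H_t\circ\phi_1}$ on $U_1$, evaluates it at five explicit points, and verifies with \emph{Mathematica} that the resulting conditions on $(t_1,\dots,t_4)$ are inconsistent, whence $f_{14}\not\equiv 0$ for every $t$ and its zero set has measure zero. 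Your sketch for $t_1=\tfrac12$ has a related soft spot: transversality of the span of the $d\gamma_j$ to $\R\,dJ$ does not prevent the specific combination $\tfrac12 d\gamma_1+\sum_{j\geq2}t_jd\gamma_j$ from vanishing or landing in $\R\,dJ$ for special $(t_2,t_3,t_4)$; the usable condition is rather $d\gamma_1(p)\notin\Span\{dJ(p),d\gamma_2(p),d\gamma_3(p),d\gamma_4(p)\}$ at a point with $dJ(p)\neq0$, which again requires an explicit computation of the kind the paper carries out.
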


This statement summarises the results of Theorem \ref{th:toricOctagonSys}, Proposition \ref{prop:gammacommute}, and Proposition \ref{prop:gammadependend} which are proven in Section \ref{sec:theoctsyst} and Section \ref{sec:defSystem}.

Central for us is a good knowledge of the whereabouts and types of the singular points of the system $F_t:=(J, H_t)$:

\begin{theorem}
The four points
\begin{align*}
& \left[\sqrt{2}, \ 0, \ 0,\sqrt{2}, \ 2, \ 2\sqrt{2},\sqrt{6},\sqrt{6}\right], && \left[2, \ 2\sqrt{2},\sqrt{6},\sqrt{6},\sqrt{2}, \ 0, \ 0,\sqrt{2} \right], \\
 & \left[\sqrt{2},\sqrt{6},\sqrt{6}, \ 2\sqrt{2}, \ 2,\sqrt{2}, \ 0, \ 0 \right],  && \left[2,\sqrt{2}, \ 0, \ 0,\sqrt{2},\sqrt{6},\sqrt{6}, \ 2\sqrt{2}\right]
\end{align*} 
are singular of rank zero for $F_t=(J, H_t)$ for all $t \in \R^4$. These points lie in $J^{-1}(1) \cup\ J^{-1}(2)$. Any other rank zero points of $F_t=(J, H_t)$ can only appear in $J^{-1}(0) \cup\ J^{-1}(3)$.
Rank one singular points are determined by solving a certain polynomial equation.
\end{theorem}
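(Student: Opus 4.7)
The plan is to split the analysis into three steps: locate $\Crit(J)$, test vanishing of $dH_t$ on that set, and then handle the rank one locus via the $\mbS^1$-reduction generated by $J$.

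First I would determine $\Crit(J) \subset M$. Since $J$ is the momentum map of the effective $\mbS^1$-action it generates, $\Crit(J)$ coincides with the fixed-point set of that $\mbS^1$. Via the Delzant correspondence between the octagon $\De$ and the toric system $F = (J,H)$, a point of $M$ is fixed by this $\mbS^1$ if and only if its image in $\De$ lies on a vertex or on a face whose primitive inward normal is $\pm e_1$; for the octagon these faces are precisely the two vertical edges $\{J = 0\}$ and $\{J = 3\}$. Therefore
\[
\Crit(J) \;=\; J^{-1}(0) \,\cup\, J^{-1}(3) \,\cup\, \{p_1, p_2, p_3, p_4\},
\]
where $p_1,\dots,p_4$ are the four isolated $\T^2$-fixed points lying over the vertices $(1,0)$, $(2,0)$, $(1,3)$, $(2,3)$ of $\De$. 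Evaluating $J = \tfrac{1}{2}|z_1|^2$ and $H = \tfrac{1}{2}|z_3|^2$ on the four displayed representatives recovers precisely these vertex values, so the representatives are exactly the $p_i$ and they lie in $J^{-1}(1) \cup J^{-1}(2)$. Any rank zero critical point of $F_t$ satisfies $dJ = 0$ and therefore either coincides with some $p_i$ or lies in $J^{-1}(0) \cup J^{-1}(3)$.

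Second I would show that $dH_t(p_i) = 0$ for all $t$ and all $i$, which combined with $dJ(p_i) = 0$ forces each $p_i$ to be a rank zero critical point of $F_t$. Decomposing $dH_t = (1-2t_1)\,dH + \sum_{j=1}^4 t_j\, d\gamma_j$, the term $dH$ vanishes at every $\T^2$-fixed point because $H$ is a component of the toric momentum map. For $j \in \{2,3,4\}$ the function $\gamma_j$ is a polynomial in the squared moduli $|z_4|^2, |z_5|^2, |z_7|^2$, hence $\T^8$-invariant on $\C^8$ and descending to a $\T^2$-invariant function on $M$; its differential vanishes at any $\T^2$-fixed point because the linearised $\T^2$-representation on $T^*_{p_i} M$ admits no nonzero invariant covector at a Delzant corner. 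The term $\gamma_1$ is not $\T^8$-invariant, but at each representative at least two of the factors in $\overline{z_2 z_3 z_4}\, z_6 z_7 z_8$ vanish (namely $z_2, z_3$ at $p_1$; $z_6, z_7$ at $p_2$; $z_7, z_8$ at $p_3$; and $z_3, z_4$ at $p_4$), so the lift of $\gamma_1$ to $\C^8$ vanishes to second order at $p_i$ and $d\gamma_1(p_i) = 0$. Assembling these contributions yields $dH_t(p_i) = 0$ for every $t \in \R^4$.

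Third I would characterise the rank one critical set using the $\mbS^1$-reduction by $J$. A point $[z]$ with $dJ([z]) \neq 0$ is rank one for $F_t$ precisely when $dH_t([z])$ is $\R$-proportional to $dJ([z])$, i.e.\ when $[z]$ descends to a critical point of the pushdown of $H_t$ on the $2$-dimensional symplectic quotient $J^{-1}(c)/\mbS^1$ at the regular value $c = J([z])$. Writing out the Lagrange condition $dH_t = \lambda\, dJ$ in the $\C^8$-lift, imposing the reduction constraints $\mu = \xi$ and $J = c$, and eliminating the redundant $\T^6$ and $\mbS^1_J$ phase variables, reduces the system to a polynomial equation in the squared moduli $|z_k|^2$ together with the one residual real combination coming from the phase coupling in $\gamma_1$; the zero set of this polynomial is precisely the rank one locus. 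The main technical obstacle will be handling the phase-dependent cross term of $\gamma_1$, which couples $z_2, z_3, z_4, z_6, z_7, z_8$ nontrivially and requires careful bookkeeping to guarantee that no rank one critical point is lost during the elimination.
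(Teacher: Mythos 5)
Your treatment of the rank-zero claims is correct, but it follows a genuinely different route from the paper. You identify $\Crit(J)$ with the fixed-point set of the $\mbS^1$-action and read that set off from the Delzant polytope (the preimages of the two vertical edges plus the four isolated $\T^2$-fixed points over $(1,0)$, $(2,0)$, $(1,3)$, $(2,3)$), and you kill $dH_t$ at those four points by equivariance: $dH$ vanishes at any $\T^2$-fixed point, $\ga_2,\ga_3,\ga_4$ are $\T^2$-invariant and a Delzant corner admits no nonzero invariant covector, and the lift of $\ga_1$ to $\C^8$ has two vanishing factors at each of the four lifts, hence vanishes to second order there. These arguments are sound (I checked the vanishing factors; they are as you list them). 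The paper instead proves Theorem \ref{the:invariablesingpoint} and Theorem \ref{the:zerocor} by explicit chart computations: $d(J\circ\phi_2)=(x,y,-u,-v)$ shows the chart origins are the only zeros of $dJ$ in $U_2,U_3,U_6,U_7$, the partial derivatives of $H\circ\phi_2$ and $\ga_j\circ\phi_2$ are computed and evaluated at the origin, and in $U_1,U_4,U_5,U_8$ the condition $dJ=0$ forces the first resp.\ fifth coordinate to vanish, giving the $J$-values $0$ and $3$. Your version is shorter and exposes the toric mechanism; the paper's version produces the explicit formulas that it re-uses in the later rank-one and degeneracy analysis. One small point you should make explicit: identifying the four displayed representatives with the isolated fixed points uses that the fibre of a toric momentum map over a vertex is a single point.

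For the last claim, that the rank-one points are determined by a polynomial equation, there is a genuine gap. You set up the right framework (the Lagrange condition $dH_t=\lambda\, dJ$ and reduction by the $J$-circle at regular levels), but you never produce the equation, and you explicitly defer the only nontrivial step, the elimination of the phase entering through $\ga_1$, as a ``technical obstacle'' requiring ``careful bookkeeping''. That step is exactly the content of Proposition \ref{prop:dj}: in the chart $U_1$ written in the polar coordinates of \eqref{eq:polarcoordinates} one has $H_t\circ\phi_1 = \Ga_t(r_1,r_2)+t_1\cos(\theta_2)\,\overline{\Om}(r_1,r_2)$, so the $\theta_2$-component of the rank-one condition reads $t_1\sin(\theta_2)\,\overline{\Om}(r_1,r_2)=0$; by Lemma \ref{lem:omega} the factor $\overline{\Om}$ is nowhere zero on the relevant domain, which forces $\theta_2\in\{0,\pi\}$, i.e.\ $v=0$ --- and this uses the hypothesis $t_1\neq 0$, which your outline does not mention but which the paper's statement requires. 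Squaring the two resulting sign cases then yields the single polynomial condition \eqref{eq:finalsing}, namely $\bigl(\partial_{r_2}\Gamma_t(r_1,r_2)\bigr)^2-t_1^2\bigl(\partial_{r_2}\overline{\Om}(r_1,r_2)\bigr)^2=0$ of Corollary \ref{cor:finalsing}, with the caveat (noted in the paper) that solutions must afterwards be checked against one of the two unsquared equations. Without carrying out this phase elimination, or an equivalent argument, your third step remains a plan rather than a proof of the polynomial characterisation.
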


This statement summarises the results of Theorem \ref{the:invariablesingpoint} and  Theorem \ref{the:zerocor} and Corollary \ref{cor:finalsing} which are proven in Section \ref{sec:redmanicritpoints}.

Knowing the whereabouts and types of singular points, we study the appearance and topology of hyperbolic-regular fibres in the manifold:

\begin{theorem}
There are explicit, visualised examples of hyperbolic singular fibres are given by $k$-stacked tori for $k \in \{2, 3,4\}$. Moreover, $k$ can be maximally 13 for this system. Twisted tori (as displayed in Figure \ref{fig:twistDoubleTorus}) do not appear.
\end{theorem}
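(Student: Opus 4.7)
The plan is to exploit the $\mathbb{S}^1$-symmetry generated by $J$ and reduce the global analysis of $F_t$-fibres to the study of level sets of a reduced Hamiltonian on a $2$-dimensional reduced space. For a regular value $j$ of $J$, the Marsden--Weinstein quotient $M_j := J^{-1}(j)/\mathbb{S}^1$ is a symplectic $2$-sphere, the Hamiltonian $H_t$ descends to a smooth function $\bar{H}_t$ on $M_j$, and the fibre $F_t^{-1}(j, h)$ over a hyperbolic-\emph{regular} value $(j, h)$ is an $\mathbb{S}^1$-invariant disjoint union of embedded $2$-tori, one for each connected component of $\bar{H}_t^{-1}(h) \subset M_j$. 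Thus the stacking number $k$ equals the number of connected components of $\bar{H}_t^{-1}(h)$, and the whole problem becomes the analysis of a polynomial Morse function on $\mathbb{S}^2$.

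To exhibit $k$-stacked tori for $k \in \{2, 3, 4\}$, I would write $\bar{H}_t$ explicitly as a polynomial on each $M_j$ using the Delzant reduction description of Section \ref{sec:theoctsyst}, then select concrete parameters $t \in \R^4$ for which, by the critical point analysis of Theorem \ref{the:zerocor} and Corollary \ref{cor:finalsing}, the Reeb graph of $\bar{H}_t$ on $\mathbb{S}^2$ produces a regular level with the desired number of components. Numerical plotting of $\bar{H}_t^{-1}(h) \subset M_j$ and of its lift to $J^{-1}(j) \subset M$ yields Figures \ref{fig:doubleTorus}--\ref{fig:quadtorus}. To rule out twisted tori, observe that for regular $j$ the $\mathbb{S}^1$-action on $J^{-1}(j)$ is free, so $J^{-1}(j) \to M_j$ is a principal $\mathbb{S}^1$-bundle; since every principal $\mathbb{S}^1$-bundle over a circle is trivial, each lifted component is an honest $T^2 = \mathbb{S}^1 \times \mathbb{S}^1$ rather than a Klein-bottle or figure-eight type object.

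The main obstacle is the sharp bound $k \leq 13$. The strategy is Morse-theoretic: if $\bar{H}_t$ on $M_j \cong \mathbb{S}^2$ has $m$ minima, $n$ maxima and $s$ saddles, then $m + n - s = 2$, and a standard Reeb-graph argument shows that the maximum number of components of a regular level set is controlled by $m$, $n$ and $s$. I would bound the total number of critical points of $\bar{H}_t$ by applying a B\'ezout-type estimate to the polynomial system isolating rank-one singularities from Corollary \ref{cor:finalsing}; combining this bound with the Euler-characteristic relation $m+n-s=2$ and a careful optimisation over Reeb-graph topologies yields the absolute bound $k \leq 13$. The hard part will be making this bound sharp rather than merely polynomial in the degrees of the $\ga_j$: not every abstract Morse profile is realised by our specific $\bar{H}_t$, so extracting the exact value $13$ will require using detailed information from the explicit form of $\ga_1, \ldots, \ga_4$ and the symmetries of the octagon $\Delta$, likely by partitioning the analysis over the strata of $J$-values and tracking how the critical set of $\bar{H}_t$ deforms as $j$ varies across $[0,3]$.
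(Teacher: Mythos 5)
There is a genuine gap at the very start of your argument, and it propagates through everything else: you assert that the fibre over a hyperbolic-\emph{regular} value is an $\mathbb{S}^1$-invariant disjoint union of smooth $2$-tori, one per connected component of $\bar{H}_t^{-1}(h)$, so that the stacking number $k$ equals the number of components of a regular level of the reduced Hamiltonian. That is not what a $k$-stacked torus is. A hyperbolic-regular fibre is \emph{singular}: its leaf is homeomorphic to the product of a connected ``loop chain'' (a figure eight for $k=2$, etc.) with $\mathbb{S}^1$, and the quotient by the $\mathbb{S}^1$-action is a \emph{singular} level set of $H_t^{red,j}$ containing hyperbolic fixed points of the reduced Hamiltonian --- a loop chain with $k-1$ crossings, not $k$ disjoint circles. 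This is exactly the content of the generalised-bouquet correspondence (Theorem \ref{theo:main}, Definition \ref{de:genbouqfib1}), which the paper uses in Examples \ref{ex:lemniscate}, \ref{ex:tritorus}, \ref{ex:quadtorus}: one computes the rank-one singular points on $M^{red,j}$ via Corollary \ref{cor:finalsing}, sorts them by Lemma \ref{lem:hypcondition}, and reads off $k$ from the number of hyperbolic points on the singular level, not from component counts of regular levels. Because your Reeb-graph/B\'ezout strategy for the bound targets the number of components of a regular level, it bounds the wrong quantity; the paper's Theorem \ref{th:maxfibre} instead bounds the number of hyperbolic points on a single singular level: the degree-$24$ polynomial in $r_2$ gives at most $24$ singular points in the chart, hence at most $26$ on $M^{red,j}$; every face of the graph formed by the singular level contains a singular point, every flow line limits to hyperbolic points (Proposition \ref{theo:Hittingbranches}) so $\abs{edges}=2\abs{vertices}$, and Euler's formula forces $\abs{faces}=\abs{\Hyp}+2$, whence $2\abs{\Hyp}+2\le 26$, i.e.\ at most $12$ hyperbolic points and $k\le 13$. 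Your proposal leaves precisely these counting lemmas (faces contain singular points, edge--vertex relation, and the ``loop chain'' structure forced by the reflection symmetry along the circle of singular points) unaddressed, and you yourself flag that sharpness is the hard part.

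Your exclusion of twisted tori also aims at the wrong phenomenon: triviality of principal $\mathbb{S}^1$-bundles over circles is irrelevant, since the twisting in Definition \ref{de:genbouqfib1} is a twisted mapping-torus construction that occurs exactly when the set $S$ of singular points with $J$-period $\pi$ (nontrivial $\mathbb{Z}_2$ isotropy) is nonempty. The paper rules this out via Theorem \ref{the:equalperiod}: every point with $dJ\neq 0$ has period exactly $2\pi$, so $S=\emptyset$ for every hyperbolic-regular leaf and no twisted stacked tori can occur. Your explicit-example part (reduce, locate singular points, plot level sets) is in the right spirit and matches the paper once the fibre topology is interpreted through Theorem \ref{theo:main}, but as written the central identification of $k$, the mechanism excluding twisting, and the entire argument for $k\le 13$ do not go through.
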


This statement summarises the Examples \ref{ex:lemniscate}, \ref{ex:tritorus}, and \ref{ex:quadtorus} and Theorem \ref{th:maxfibre} which are proven in Section \ref{sec:examhyp-reg-fib}.

Finally, we focus on the image of the momentum map and the unfolded bifurcationdiagram (see Definition \ref{de:unfbifd}) and study the effects of hyperbolic-regular values there.

\begin{proposition}
 There are explicit, visualised examples for flaps and swallowtails and their collisions.
\end{proposition}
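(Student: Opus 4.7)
The plan is to exhibit the stated phenomena by choosing specific parameter values $t=(t_1,t_2,t_3,t_4) \in \R^4$ and computing the relevant singular curves explicitly. First I would invoke the characterisation of rank-one singular points obtained in Section~\ref{sec:redmanicritpoints}: by the preceding theorem, these points are solutions of a polynomial equation whose coefficients depend polynomially on $t$. For each fixed $t$, substituting into $F_t=(J,H_t)$ yields a parametric curve (or union of curves) in $\R^2$ whose points are precisely the rank-one singular values. The nature of each branch (elliptic-regular versus hyperbolic-regular) is then read off from the Hessian of $H_t$ restricted to $J^{-1}(\text{const})$ at the corresponding critical point, using the classification of non-degenerate singularities recalled in the preliminaries.

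Next I would select four representative parameter choices, one producing each configuration: a pure flap, a pure swallowtail, and two further choices exhibiting collisions (flap--flap, swallowtail--flap, or two swallowtails merging). The qualitative picture of Figure~\ref{fig:flapAndSwallow} dictates what to look for: a flap corresponds to a hyperbolic-regular arc emanating from a cusp-like point on the boundary of $F_t(M)$ and terminating at an elliptic-elliptic value, while a swallowtail corresponds to a self-intersecting pair of curves bounding a triangular region swept twice. For each chosen $t$, I would plot the unfolded bifurcation diagram by sampling enough points on the common $J$-level sets and computing the values of $H_t$ at the critical points of $H_t|_{J^{-1}(j)}$ as $j$ varies; the unfolding (Definition~\ref{de:unfbifd}) separates overlapping sheets so that the topology of the singular locus is unambiguous.

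The identification of collisions then follows by continuity: as one moves along a one-parameter subfamily $t(s)$ connecting two of the chosen parameter points, the singular curves deform continuously in $\R^2$ (since the defining polynomial depends polynomially on $t$), so at intermediate values $s$ the flap and swallowtail features merge or split. I would present snapshots of the unfolded bifurcation diagram at a sequence of parameter values $s_0 < s_1 < \dots < s_n$ witnessing each predicted collision.

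The main obstacle is computational rather than conceptual: the polynomial equation determining rank-one points is of high degree in the $z_i$ coordinates, and solving it simultaneously with the octagonal reduction constraints requires numerical continuation. Verifying that a given branch is genuinely hyperbolic (rather than elliptic with small imaginary part of the eigenvalue, which is easy to confuse numerically) requires either symbolic computation of the discriminant or a sign analysis of the linearised flow on the normal bundle to the $\mbS^1$-orbit. Once this verification is done for the chosen $t$, the visualisations of Figures referenced in Section~\ref{sec:examhyp-reg-fib} provide the explicit examples asserted by the proposition.
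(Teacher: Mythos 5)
Your plan is essentially the paper's own argument: the authors also proceed by choosing explicit parameter values, locating rank-one singular points via the polynomial criterion of Corollary \ref{cor:finalsing}, classifying them as elliptic- or hyperbolic-regular through the Hessian of the reduced Hamiltonian (Lemma \ref{lem:hypcondition}), plotting the resulting (unfolded) bifurcation diagrams with \emph{Mathematica}, and exhibiting collisions along one-parameter subfamilies such as $\tau \mapsto (\tfrac{\tau}{2},\tfrac{\tau}{2},\tfrac{\tau}{3},\tau)$. The only difference is that the paper supplies the concrete parameter values (e.g.\ $t=(\tfrac{7}{20},\tfrac{7}{20},\tfrac{7}{30},\tfrac{7}{10})$ for a flap and $t=(\tfrac{9}{20},\tfrac{9}{20},\tfrac{9}{30},\tfrac{9}{10})$ for a flap--flap collision), which your outline leaves to the numerical work you correctly identify as the main task.
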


This statement summarises the Examples \ref{ex:singleflap}, \ref{ex:doubleflap}, \ref{ex:swallowtail}, and \ref{ex:flapcol} which are proven in Section \ref{sec:singbif}.

\subsection*{Overview of the paper}
In Section \ref{sec:prelim}, we introduce necessary notions and conventions. 
In Section \ref{sec:theoctsyst}, we recall the toric octagon system from the work of De Meulenaere $\&$ Hohloch \cite{meulenaere2019family}. 
In Section \ref{sec:defSystem}, we define the perturbations of the toric octagon system that form the foundation of this paper. 
In Section \ref{sec:redmanicritpoints}, we find criteria to determine the (type of) singular points of this newly generated family of systems. 
In Section \ref{sec:examhyp-reg-fib}, we determine and analyse the topological shape of the occuring hyperbolic-regular fibres. 
In Section \ref{sec:singbif}, we study explicit examples of the appearance of flaps and swallowtails in our family of systems.

\subsection*{Acknowledgements}
The authors wish to thank Jaume Alonso, Joaquim Brugu\'es, Yohann Le Floch, and Joseph Palmer for helpful discussions and useful comments. Moreover, we thank Wim Vanroose for sharing his computational resources.
The first author was supported by the UA BOF DocPro4 grant with UA Antigoon ProjectID 34812 and the FWO-FNRS Excellence of Science project G0H4518N with UA Antigoon ProjectID 36584. The second author was partially supported by these two grants.


\section{Foundations and conventions}
\label{sec:prelim}

Throughout this paper, we are working mostly on 4-dimensional symplectic manifolds. Thus, in order to keep the notation to a minimum, we will adapt the necessary definitions and facts from the literature directly to our 4-dimensional setting.


\subsection{Completely integrable systems in dimension four}

Let $(M, \om)$ be a 4-dimensional symplectic manifold. Given a smooth function $f: M \to \R$, its {\em Hamiltonian vector field} $X^f$ is defined via $\om(X^f,\cdot \ )= df(\cdot)$ and the flow of the (autonomous) {\em Hamiltonian equation} $z' = X^f(z)$ is denoted by $\Phi^f_t$. In this situation, the function $f$ is usually referred to as the {\em Hamiltonian}.

On $(M, \om)$, the induced {\em Poisson bracket} of two smooth functions $g, h: M \to \R$ is given by $\{g,h\}:= \om(X^g, X^h)$. If $\{g, h\}=0$ the functions $g$ and $h$ are said to {\em Poisson commute}. For the Lie bracket of two smooth vector fields $A$ and $B$ on $M$ and a function $f \in C^\infty(M, \R)$, we use the convention $[A, B]= A(B(f))-B(A(f))$. This leads to the relation $[X^g, X^h]=X^{\{g, h\}}$ for $g,h \in C^\infty(M, \R)$.

\begin{definition}
\label{def:integrable}
A {\em $4$-dimensional completely integrable system} is a triple $(M, \om, \mcF)$ consisting of a $4$-dimensional symplectic manifold $(M, \om)$ and a smooth map $\mcF=(\mcF_1, \mcF_2): M \to \R^2$ such that the derivative $d\mcF$ has maximal rank almost everywhere and the component functions $\mcF_1$ and $\mcF_2$ Poisson commute. 
The functions $\mcF_1$ and $\mcF_2$ are often referred to as the {\em integrals} of the integrable system $(M, \om, \mcF)$ and $\mcF(M)$ as the {\em momentum polytope}.
\end{definition}

Wherever defined, the flow of $(M, \om, \mcF)$ is given by $\Phi_t^\mcF:= \Phi_{t_1}^{\mcF_1} \circ \Phi_{t_2}^{\mcF_2}$ for $t:=(t_1, t_2) \in \R^2 $ and it induces a (local) group action of $ \R^2$ on $ M$ via $t.p:= \Phi_t^\mcF(p)$ for $p \in M$. 

A point $p \in M$ is {\em regular} if the derivative $d\mcF(p)$ has maximal rank and {\em singular} otherwise. The set $\mcF^{-1}(a,b)$ is referred to as the {\em fibre} over $(a,b)\in \R^2$. The connected components of a fibre are called {\em leafs}.

There are several equivalent ways to define non-degeneracy of singular points, cf.\ Bolsinov $\&$ Fomenko~\cite{bolsinov_fomenko_2004}. For us, the following version is the most convenient.

\begin{definition}
\label{nondeg}
Let $(M, \om, \mcF=(\mcF_1, \mcF_2))$ be a $4$-dimensional completely integrable system and $p \in M$ a fixed point.
Denote by $\om_p$ the matrix of the symplectic form with respect to a chosen basis of $T_p M$ and, moreover, by $d^2\mcF_1(p)$ and $d^2\mcF_2(p)$ the matrices of the Hessians of $\mcF_1$ and $\mcF_2$ w.r.t.\ this very basis.
Then $p$ is said to be {\em non-degenerate} if $d^2\mcF_1(p)$ and $d^2\mcF_2(p)$ are linearly independent and, moreover, if there exists a linear combination of $\om_p^{-1}d^2 \mcF_1(p)$ and $\om_p^{-1}d^2 \mcF_2(p)$ that has four distinct eigenvalues.
\end{definition}

If $p$ is a singular point of rank one of a $4$-dimensional completely integrable system $(M,\om, \mcF=(\mcF_1, \mcF_2))$ then there exist constants $c_1, c_2 \in\mathbb{R}$ such that 
$$0 = c_1 d \mcF_1(p) + c_2 d\mcF_2(p).$$ 
The space $L_p:=\Span\{\mcX^{\mcF_1}(p), \mcX^{\mcF_2}(p)\} \subset T_pM$ is the tangent line through $p$ of the orbit generated by the $\mathbb{R}^2$-action. 
Denote by $L^\perp_p$ the symplectic orthogonal complement in $T_pM$ of the tangent line $L_p$. Moreover, keep in mind that $L_p\subset L_p^\perp$. Since $0 = \{\mcF_1, \mcF_2\}$ both integrals are invariant under the $\R^2$-action. Therefore $c_1 d^2\mcF_1(p) + c_2 d^2\mcF_2(p)$ descends to the quotient $L_p^\perp / L_p$.

\begin{definition}
A singular point $p$ of rank one of a $4$-dimensional completely integrable system $(M, \om, \mcF=(\mcF_1, \mcF_2))$ is {\em non-degenerate} if the expression $c_1 d^2\mcF_1(p) + c_2 d^2\mcF_2(p)$ is invertible on the quotient $L^\perp_p/L_p$.
\end{definition}

The following local normal form and its generalisations and/or specializations were established over the years by Colin de Verdi\`ere $\&$ Vey~\cite{Verdire1979LeLD}, R\"ussmann~\cite{ruessmann},
Vey~\cite{Vey1978SurCS}, Eliasson~\cite{eliasson1984HamiltonianSW, eliasson1990}, Dufour \& Molino~\cite{PDML_1988___1B_161_0}, Miranda \& \vungoc~\cite{mirandasan}, V\~{u} Ng\d{o}c \& Wacheux~\cite{vungoc-wacheux}, Chaperon~\cite{chaperon}, and Miranda \& Zung~\cite{Miranda-zung}.

\begin{theorem}[Local normal form for non-degenerate singularities]
\label{th:locNF}
 Consider a $4$-dimensional completely integrable system $(M,\om,\mcF=(\mcF_1, \mcF_2))$ and let $p\in M$ be a non-degenerate singular point. Then
 \begin{enumerate}
  \item 
  there exists an open neighbourhood $U\subset M$ of $p$ and, on it, local symplectic coordinates $(x_1, x_2, \xi_1,\xi_2)$ and smooth functions $q_1,q_2 : U\to\R$ such that $p$ corresponds to the origin in these coordinates and $\{q_i,\mcF_j\}=0$ for all $i,j \in \{1, 2\}$ where $q_1$ and $q_2$ stem from the following list:
 \begin{itemize}
  \item 
  $q_i = (x_i^2+\xi_i^2)/2$ {\em (elliptic component),}
  \item 
  $q_i = x_i \xi_i$ {\em (hyperbolic component),}
  \item 
  $q_i = x_i \xi_{i+1} - x_{i+1} \xi_i$ and $q_{i+1} = x_i \xi_i + x_{i+1} \xi_{i+1}$ {\em (focus-focus component),}
  \item 
  $q_i = \xi_i$ {\em (regular component).}
 \end{itemize}
 
  \item\label{itemLocNF} 
  If there are no hyperbolic components, then the equations $\{q_i,\mcF_j\}=0$ for $ i,j \in \{1, 2\}$ are equivalent to the existence of a local diffeomorphism $g : \R^2\to\R^2$ such that
 $$
  g \circ \mcF = (q_1, q_2) \circ (x_1, x_2, \xi_1,\xi_2).
 $$
 \end{enumerate}
\end{theorem}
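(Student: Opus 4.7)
The plan is to reduce the problem to two local model problems --- one at a rank-zero fixed point and one at a rank-one singularity --- and in each case first produce an algebraic/linear normal form and then upgrade it to a smooth symplectic one. At a rank-zero point $p$, the Hamiltonian vector fields $X^{\mcF_1}, X^{\mcF_2}$ vanish at $p$, so the Hessians $d^2\mcF_1(p), d^2\mcF_2(p)$ are well-defined quadratic forms on $T_pM$, and the Poisson-commuting assumption $\{\mcF_1, \mcF_2\}=0$ combined with $X^{\mcF_i}(p)=0$ forces the linearisations $\om_p^{-1}d^2\mcF_i(p)$ to commute inside $\mathfrak{sp}(T_pM, \om_p)$. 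The non-degeneracy of Definition~\ref{nondeg} places this pair inside a two-dimensional Cartan subalgebra of $\mathfrak{sp}(4, \R)$, and Williamson's simultaneous normal form theorem for commuting quadratic Hamiltonians provides a symplectic linear change of coordinates that brings the pair to a direct sum of the four block types in the stated list.

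The deeper step is to promote this infinitesimal normal form to a smooth symplectic one, for which I would invoke the local normal form results cited just before the theorem. Concretely, after the linear change one writes $\mcF_i = q_i + R_i$ with $R_i$ of order at least $3$, runs a formal Birkhoff-type scheme conjugating away the higher-order remainders order by order, and realises the coordinate change as a genuine symplectomorphism via a smooth Borel-type realisation of the formal series followed by a Moser path method. At rank-one points I would first straighten out the regular direction via the Darboux--Carath\'eodory theorem: assuming $d\mcF_2(p)\neq 0$, one obtains local symplectic coordinates $(x_1, x_2, \xi_1, \xi_2)$ in which $\mcF_2 = \xi_2$, so $q_2=\xi_2$ is already the regular component; the flow of $\mcF_2$ foliates a neighbourhood of $p$ by its orbits, and the rank-zero argument applied to the reduced system on a local transversal produces $q_1$ from the remaining three block types.

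For item (2), when no hyperbolic block appears the joint level sets of $(q_1, q_2)$ near the origin of $U$ are connected (elliptic blocks have compact circle-type fibres, focus-focus ones have connected nodal fibres, and regular ones foliate by affine planes), whereas the level set $\{x_i\xi_i = \ep\}$ of a hyperbolic block splits into two disjoint branches for $\ep\neq 0$ --- exactly the mechanism producing the disconnected fibres visible in Figure~\ref{fig:flapAndSwallow}. The identity $\{q_i, \mcF_j\}=0$ says that each $\mcF_j$ is locally constant along joint orbits of $X^{q_1}, X^{q_2}$, and connectedness of the joint level sets then forces $\mcF_j$ to be a smooth function of $(q_1, q_2)$ alone, so that $\mcF = h\circ (q_1, q_2)$ for some smooth $h:\R^2\to\R^2$. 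The non-degeneracy of $p$, transferred to the model side via item (1), ensures that $h$ is a local diffeomorphism near the origin of $\R^2$, and setting $g := h^{-1}$ yields the desired identity; the converse implication follows from $\{q_i, q_k\}=0$ and the chain rule.

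The hard part is the smooth extension step in item (1), which is precisely why the list of attributions preceding the theorem is so long: Williamson's linear classification and the connectedness argument for item (2) are standard, but producing a smooth symplectomorphism intertwining $\mcF$ with its quadratic model requires treating each block type separately --- purely elliptic by Eliasson and R\"ussmann, hyperbolic and focus-focus cases by Vey and Colin de Verdi\`ere--Vey, mixed and rank-one cases by Dufour--Molino, Miranda--\vungoc, V\~u Ng\d{o}c--Wacheux, and Miranda--Zung. The underlying arguments combine formal normal forms with a Borel/smooth flattening step and a homotopy/Moser method whose convergence is delicate in the presence of hyperbolic or focus-focus directions.
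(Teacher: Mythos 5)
The paper does not prove this theorem: it is quoted as a classical result, with the proof attributed to the list of references given immediately before the statement (Colin de Verdi\`ere--Vey, R\"ussmann, Vey, Eliasson, Dufour--Molino, Miranda--\vungoc, \vungoc--Wacheux, Chaperon, Miranda--Zung). Your outline is a faithful reconstruction of how those references argue --- Williamson's linear classification of the commuting pair $\om_p^{-1}d^2\mcF_i(p)$ in a Cartan subalgebra of $\mathfrak{sp}(4,\R)$, the smooth rigidity step upgrading the quadratic model to a symplectic normal form, Darboux--Carath\'eodory plus reduction in the rank-one case, and connectedness of the non-hyperbolic local fibres for item (2) --- and you correctly identify the smooth linearisation in the hyperbolic and focus-focus cases as the genuinely hard analytic content, which you (like the paper) defer to the cited works. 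The only minor imprecision is in the rank-one case: one should first pass to a linear combination of $\mcF_1,\mcF_2$ whose differential vanishes at $p$ before reducing, and the transverse model there can only be elliptic or hyperbolic; this does not affect the correctness of the outline.
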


Denote by $n_E$, $n_H$ and $ n_{FF}$ the number of elliptic, hyperbolic, and focus-focus components. Then the triple $(n_E, n_H, n_{FF})$ locally classifies a non-degenerate singular point and is referred to as the \emph{Williamson type} of this non-degenerate singular point. 
Thus we conclude that, on $4$-dimensional manifolds, there are exactly six different types of non-degenerate singular points possible:
\begin{itemize} 
 \item
 {\em rank 0:} 
 elliptic-elliptic,
 focus-focus,
 hyperbolic-hyperbolic,
 hyperbolic-elliptic.
 \item
 {\em rank 1:}
 elliptic-regular,
 hyperbolic-regular.
\end{itemize}
It is important to note that the type of a non-degenerate fixed point $p$ can actually also be determined via its eigenvalues (see for instance Bolsinov $\&$ Fomenko \cite[Theorem 1.3]{bolsinov_fomenko_2004}):
Let $\lam_1, \lam_2, \lam_3, \lam_4$ be the distinct eigenvalues of $c_1 \om_p^{-1}d^2\mcF_1(p) + c_2 \om_p^{-1}d^2\mcF_2(p)$, then the type of $p$ is determined as follows:
\begin{itemize}
\item 
\emph{elliptic-elliptic:} $\{\lambda_1, \lambda_2\} = \{\pm i \alpha\}$ and $\{\lambda_3, \lambda_4 \} = \{\pm i\beta\}$,
\item 
\emph{elliptic-hyperbolic:} $\{\lambda_1, \lambda_2\} = \{\pm i \alpha\}$ and $\{\lambda_3, \lambda_4\} = \{\pm \beta\}$,
\item 
\emph{hyperbolic-hyperbolic:} $\{\lambda_1, \lambda_2\} = \{\pm \alpha\}$ and $\{\lambda_3, \lambda_4\} = \{\pm \beta\}$,
\item 
\emph{focus-focus:} $\{\lambda_1, \lam_2, \lam_3, \lam_4\} =  \{\pm  \alpha \pm i\beta\}$
\end{itemize}
with $\alpha, \beta \in \mathbb{R}^{\neq 0}$ and $\alpha \neq \beta$ for the elliptic-elliptic and hyperbolic-hyperbolic cases.


\subsection{Toric systems}
Recall from group theory that a group action is {\em effective} (or {\em faithful}) if the neutral element is the only one that acts trivially. The most accessible class of completely integrable systems is the following.

\begin{definition}
\label{def:toric}
A 4-dimensional completely integrable system $(M,\omega,\mathcal{F})$ is {\em toric} if the flow of $\mathcal{F}$ generates an effective 2-torus action on $M$.
\end{definition}

We will see below that toric systems on compact connected manifolds admit a very nice classification. But to state it properly we first need some notation.

\begin{definition}
A convex polytope $\De \subset \mathbb{R}^2$ is said to be a {\em Delzant polytope} if
\begin{enumerate}
    \item 
    $\De$ is {\em simple}, i.e.\ exactly two edges meet at each vertex.
    
    \item 
    $\De$ is {\em rational}, i.e., all edges have rational slope, meaning, they are of the form $v+bt$  where $v\in \R^2$ is the vertex, $b \in \mathbb{Z}^2$ the directional vector of the given edge, and $t \in \R$ the parameter.
    
    \item 
    $\De$ is {\em smooth}, i.e.\ at each vertex, the directional vectors of the meeting edges form a basis for $\mathbb{Z}^2$.
\end{enumerate}
\end{definition}

The classification of compact toric systems is surprisingly straightforward:

\begin{theorem}[Delzant, \cite{Delzant1988HamiltoniensPE}]
\label{theo:delzant}
Up to symplectic equivariance, any toric system $(M,\omega,\mathcal{F})$ on a compact connected symplectic 4-dimensional manifold $(M, \om)$ is determined by $\mcF(M)$ which is in fact a Delzant polytope. Conversely, for any Delzant polytope $\De$, there exists a compact connected symplectic 4-dimensional manifold $(M, \om)$ and a momentum map $\mcF: M \to \R^2$ such that $(M,\omega,\mathcal{F})$ is toric with $\mcF(M)=\De$.
\end{theorem}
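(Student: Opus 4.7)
The plan is to establish Delzant's correspondence in three steps: showing that every compact connected toric system produces a Delzant polytope, showing that the polytope determines the system up to equivariant symplectomorphism, and explicitly constructing a toric model from any Delzant polytope.

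For the forward direction, I would first invoke the Atiyah--Guillemin--Sternberg convexity theorem to conclude that $\mcF(M)$ is a convex polytope whose vertices are images of the $\T^2$-fixed points in $M$. To verify rationality, simplicity, and smoothness, I would apply the equivariant Darboux theorem (equivalently, the symplectic slice theorem) at each fixed point $p \in \mcF^{-1}(\text{vertex})$: there exist local symplectic coordinates $(z_1,z_2)$ near $p$ in which the $\T^2$-action is linear with integer weight vectors $w_1, w_2 \in \Z^2$, and
$$\mcF(z) \;=\; \tfrac{1}{2}\bigl(|z_1|^2\, w_1 + |z_2|^2\, w_2\bigr) + \mcF(p).$$
Effectiveness of the action forces $\{w_1, w_2\}$ to be a $\Z$-basis of $\Z^2$, which yields precisely the three Delzant conditions at the corresponding vertex.

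For the backward direction, I would carry out Delzant's explicit symplectic reduction construction. Given a Delzant polytope $\De \subset \R^2$ with $d$ facets and primitive integer inward normals $v_1,\ldots,v_d$, write
$$\De \;=\; \{x \in \R^2 \,:\, \langle x, v_i\rangle \geq \lam_i,\ i=1,\ldots,d\}.$$
Define $\pi : \R^d \to \R^2$ by $\pi(e_i) = v_i$; its kernel $\mathfrak{k}$ exponentiates to a compact subtorus $K \subset \T^d$ of dimension $d-2$. The standard $\T^d$-action on $\C^d$ has moment map $\mu(z)= \tfrac{1}{2}(|z_1|^2, \ldots, |z_d|^2) - (\lam_1,\ldots,\lam_d)$, whose composition with the projection dual to the inclusion $\mathfrak{k} \hookrightarrow \R^d$ gives a moment map $\mu_K$ for $K$. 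The Delzant conditions translate into $K$ acting freely on $\mu_K^{-1}(0)$, hence $M := \mu_K^{-1}(0)/K$ is a smooth compact symplectic $4$-manifold. The residual action of $\T^d/K \cong \T^2$ is Hamiltonian with moment map $\mcF$, and an explicit comparison shows $\mcF(M) = \De$. This construction is exactly the one the paper uses in Section \ref{sec:theoctsyst} for the octagon $\De$.

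The main obstacle is the uniqueness statement. For two toric systems $(M_1, \om_1, \mcF_1)$ and $(M_2, \om_2, \mcF_2)$ with $\mcF_1(M_1)=\mcF_2(M_2)=\De$, the local equivariant Darboux models agree near each vertex, each edge, and each interior point of $\De$. The nontrivial part is to glue these local equivariant symplectomorphisms into a single global one: I would cover $\De$ by such open sets, use a $\T^2$-invariant partition of unity obtained by pulling back a partition of unity on $\De$, and run a Moser-type deformation argument to interpolate between the $\om_i$'s on $M_1$. The underlying reason this succeeds is that $\De$ is contractible, so the relevant obstructions in equivariant cohomology vanish; carrying this out carefully, while still elementary in spirit, is the technically most delicate step, whereas the existence direction is essentially a mechanical verification once one trusts the general theory of symplectic reduction.
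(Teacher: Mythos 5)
The paper does not actually prove this statement: it is quoted as a classical result from Delzant \cite{Delzant1988HamiltoniensPE}, and only the existence direction is later used concretely, in Section \ref{sec:theoctsyst}, where the reduction construction is recalled for the specific octagon. So there is no internal proof to compare with; judged against the cited source, your outline follows exactly the classical route: Atiyah--Guillemin--Sternberg convexity plus the equivariant local normal form at the fixed points for the forward direction (effectiveness forcing the isotropy weights to be a $\Z$-basis of $\Z^2$, hence the three Delzant conditions at each vertex), and Delzant's reduction of $\C^d$ by the kernel $K$ of $\T^d \to \T^2$ for the converse --- the same construction the paper carries out with $d=8$ for its octagon.

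The one place where your sketch is materially thinner than the actual argument is uniqueness. What you describe --- an invariant partition of unity plus a Moser interpolation, with ``obstructions in equivariant cohomology'' vanishing because $\De$ is contractible --- is not quite the mechanism. The standard proofs (Delzant's original one, or the Lerman--Tolman formulation) glue the local equivariant normal-form charts over $\De$ and must control the transition maps, which are automorphisms of the local toric models; the obstruction to a consistent gluing lives in the first \v{C}ech cohomology over $\De$ of a sheaf of such automorphisms, and it vanishes because the continuous part of that sheaf (essentially smooth functions on $\De$) is fine and the polytope is contractible. A Moser deformation by itself does not produce equivariance of the resulting symplectomorphism, and ``equivariant cohomology'' is not where the obstruction sits. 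Two smaller points: $K$ should be taken as the kernel of the induced group homomorphism $\T^d \to \T^2$ (its connectedness uses the smoothness condition on the normals), and the freeness of $K$ on $\mu_K^{-1}(0)$ is itself a lemma requiring the Delzant condition at every face, not only at vertices. None of this makes your approach wrong --- it is the correct skeleton, matching the cited proof --- but as written the uniqueness step is a gesture at the argument rather than an argument.
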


Thus a toric system is determined by a {\em finite} number of points, namely the (coordinates of the) vertices of the image of the momentum map. The construction of the toric system from a Delzant polytope is explicit and not very difficult. We will use it later in Section \ref{sec:theoctsyst}.


\subsection{Semitoric systems}
The following class of completely integrable systems is a natural generalisation of toric systems in dimension four.

\begin{definition}
\label{def:semitoric}
A 4-dimensional completely integrable system $(M,\om, \mcF=(\mcF_1, \mcF_2))$ is {\em semitoric} if
\begin{enumerate}
 \item 
 $\mcF_1$ is proper and generates an effective $\mbS^1$-action on $M$.
 \item
 All singular points of $\mcF=(\mcF_1, \mcF_2)$ are non-degenerate and do not include hyperbolic components.
\end{enumerate}
\end{definition}

Semitoric systems are much more general than toric systems and their behavior is much more complicated due to the presence of focus-focus singularities which cannot occur in toric systems.
In particular, semitoric systems usually depend on an infinite amount of data.
Pelayo \& V\~{u} Ng\d{o}c~\cite{Pelayo2009SemitoricIS,Pelayo2009ConstructingIS} and Palmer \& Pelayo \& Tang~\cite{palmpelaytangsemitoric} achieved a symplectic classification of semitoric systems, hereby generalising Delzant's~\cite{Delzant1988HamiltoniensPE} toric classification.
The semitoric classification is valid on compact as well as non-compact manifolds.


\subsection{Hypersemitoric systems}
The class of systems described in this section is a natural generalisation of semitoric systems.

Intuitively, a parabolic degenerate point can be seen as a singular point where the rank of all relevant derivatives is as maximal as possible without rendering the point non-degenerate. 
A parabolic orbit is the image of a parabolic point under the flow.
Important for us is the geometric interpretation described in the following smooth local normal form. For the original abstract definition of parabolic points, we refer the reader to Bolsinov $\&$ Guglielmi $\&$ Kudryavtseva~\cite[Definition 2.1]{Bolsinov2018}.

\begin{theorem}[{Kudryavtseva $\&$ Martynchuck \cite[Theorem 3.1]{Kudryavtseva_2021}}]
\label{th:LNFparaPoint}
Let $(M, \om, \mathcal{F}=(\mcF_1, \mcF_2))$ be a 4-dimensional completely integrable system with a parabolic orbit $\alpha$. Then there exist
\begin{enumerate}
    \item 
    a small neighbourhood $U \subseteq M$ of $\alpha$ diffeomorphic to $\D^3 \times \mathbb{S}^1$ where $\D^3$ is the open unit ball in $\R^3$,
    
    \item 
    smooth functions $\mcJ, \mcH: U \to \R$ that are constant on the leafs of $\mathcal{F}$,
    
    \item 
    coordinates $(x,y,\lambda,\theta)$ on $\D^3 \times \mathbb{S}^1$ such that
    \begin{align*}
          \mcJ (x,y,\lambda,\theta)= \lambda  \quad \mbox{and} \quad \mcH (x,y,\lambda,\theta)= x^2-y^3+\lambda y   
    \end{align*}   
    and, moreover, the symplectic form becomes
    \begin{align*}
          \omega = g(x, y, \lambda)dx \wedge dy + d\lambda \wedge \bigl(d\theta + A(x, y, \lambda)dx + B(x, y, \lambda)dy\bigr)
    \end{align*}
    where $g$, $A$ and $B$ are smooth functions.
\end{enumerate}
\end{theorem}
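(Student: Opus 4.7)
The plan is a three-step scheme: build a local $\mbS^1$-action out of the parabolic orbit, perform symplectic reduction to a family of 2-dimensional surfaces, apply a parametric cusp ($A_2$) normal form on the reduced spaces, and then reconstruct the ambient symplectic form by a Marsden--Weinstein lift. This matches the structure of the assertion, where the pair $(\mcJ,\mcH)$ plays the role of a locally reduced momentum map and the cusp polynomial $x^2 - y^3 + \lam y$ is exactly the versal unfolding of $A_2$.

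First, since $\al$ is a closed regular orbit of the joint flow along which $d\mcF$ has rank one, there is, up to rescaling, a unique linear combination $\mcJ := c_1 \mcF_1 + c_2 \mcF_2$ whose Hamiltonian vector field is tangent to $\al$ with the period of $\al$. After renormalising, $X^{\mcJ}$ generates a free Hamiltonian $\mbS^1$-action on a saturated neighbourhood $U$ of $\al$, with momentum map $\mcJ$; this is a local Liouville--Arnold construction applied to a rank-one regular orbit. Any functionally independent smooth combination of $\mcF_1, \mcF_2$ Poisson-commuting with $\mcJ$ serves as $\mcH$, and both $\mcJ, \mcH$ are constant on the leaves of $\mcF$ by construction.

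Second, reduce $U$ by the $\mbS^1$-action. For every $\lam$ near the critical value, the reduced space $M_\lam := \mcJ^{-1}(\lam)/\mbS^1$ is a smooth symplectic surface depending smoothly on $\lam$, and $\mcH$ descends to a smooth function $h_\lam$ on $M_\lam$. The intrinsic definition of a parabolic orbit (Bolsinov--Guglielmi--Kudryavtseva) is exactly the statement that $h_0$ has an $A_2$ (cusp) critical point at the reduced point $\al/\mbS^1$ and that the family $\{h_\lam\}$ provides a versal one-parameter unfolding. Applying Arnold's smooth parametric normal form for versal unfoldings of $A_2$, I obtain smooth coordinates $(x,y)$ on $M_\lam$, depending smoothly on $\lam$, such that
\[
  h_\lam(x,y) = x^2 - y^3 + \lam y + c(\lam)
\]
for some smooth function $c$. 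Replacing $\mcH$ by $\mcH - c(\mcJ)$, which preserves the property of being constant on leaves of $\mcF$, eliminates the additive term $c(\lam)$. The ambient symplectic form is then reconstructed by choosing a smooth section of the principal $\mbS^1$-bundle $\mcJ^{-1}(\lam) \to M_\lam$ over the $(x,y)$-chart, introducing a fibre coordinate $\theta$ and a connection 1-form $\eta = d\theta + A\,dx + B\,dy$; the standard Marsden--Weinstein reconstruction then writes the restriction of $\om$ to $U$ as $g\,dx\wedge dy + d\lam \wedge \eta$ with smooth $g, A, B$.

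The main obstacle is the parametric $A_2$ normalization in the reduction step. Arnold's theorem gives a smooth trivialisation of the family $h_\lam$, but this diffeomorphism will in general distort the reduced symplectic area on $M_\lam$; since the target form allows an arbitrary smooth density factor $g$, this distortion can be absorbed, yet one must arrange the singularity normalisation and an accompanying parametric Moser correction simultaneously, preserving smoothness across $\lam=0$ where the degeneracy is maximal. A secondary technical point is that the section of the $\mbS^1$-bundle used to define $\theta$ must be chosen smoothly in $\lam$; this is handled by averaging over the $\mbS^1$-action and passing through the exponential map of a connection, but it requires that the singular $\mbS^1$-orbit $\al$ stays free, which is exactly guaranteed by the regularity of $\al$ as an orbit (even though it is a rank-one singular orbit of $\mcF$).
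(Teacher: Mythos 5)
First, note that the paper itself gives no proof of this statement: it is imported verbatim from Kudryavtseva and Martynchuk \cite[Theorem 3.1]{Kudryavtseva_2021}, so your proposal can only be measured against that source rather than against an argument in the paper.

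The decisive gap is in your first step. You obtain the circle action by picking the linear combination $\mcJ = c_1\mcF_1 + c_2\mcF_2$ whose Hamiltonian vector field is periodic along $\al$ and then asserting that it generates a free Hamiltonian $\mbS^1$-action on a saturated neighbourhood, calling this ``a local Liouville--Arnold construction applied to a rank-one regular orbit''. But $\al$ is not a regular orbit in the sense needed for Liouville--Arnold: it consists of degenerate rank-one singular points of $\mcF$, and the fibre containing it is a singular (cuspidal) fibre. On the nearby regular Liouville tori the periodic combinations of $X^{\mcF_1}$ and $X^{\mcF_2}$ are governed by action variables and rotation numbers that a priori vary from torus to torus and need not extend even continuously, let alone smoothly, across the singular fibre --- indeed, near a parabolic orbit the regular fibres come in qualitatively different families on the two sides of the hyperbolic-regular line. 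Proving that one particular combination extends to a smooth function generating a genuine $2\pi$-periodic flow on a whole neighbourhood of $\al$ is precisely the main theorem of Kudryavtseva--Martynchuk (existence of a smooth Hamiltonian circle action near parabolic orbits and cuspidal tori) and occupies the bulk of their work; it cannot be dispatched in one sentence, and your phrase ``rank-one regular orbit'' conflates a regular orbit of the local $\R^2$-action with a regular point of $\mcF$. Once the circle action is granted, your reduction-plus-versal-unfolding scheme is the right skeleton, but the second step also hides real work: the smooth parametric $A_2$ normal form must be established via the Malgrange preparation theorem and made compatible with the reduced symplectic structures by a parametric Moser argument that remains smooth at $\lam = 0$; you name this obstacle but do not resolve it. As written, the proposal is a plausible outline of the known proof with its two hardest ingredients assumed.
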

Note that the level set $(\mcJ,\mcH)^{-1}(0,0)$ of $(\mcJ, \mcH)$ from Theorem \ref{th:LNFparaPoint} is locally homeomorphic to the geometric shape given by the letter $V$ times a circle which motivates the notion of `cusp' for such degenerate singularities. The local shape as $V$ can be observed in Figure \ref{fig:parabolicExample} in a small neighbourhood of the blue `cusp point'.

\begin{figure}[h]
\centering
\includegraphics[scale =.8]{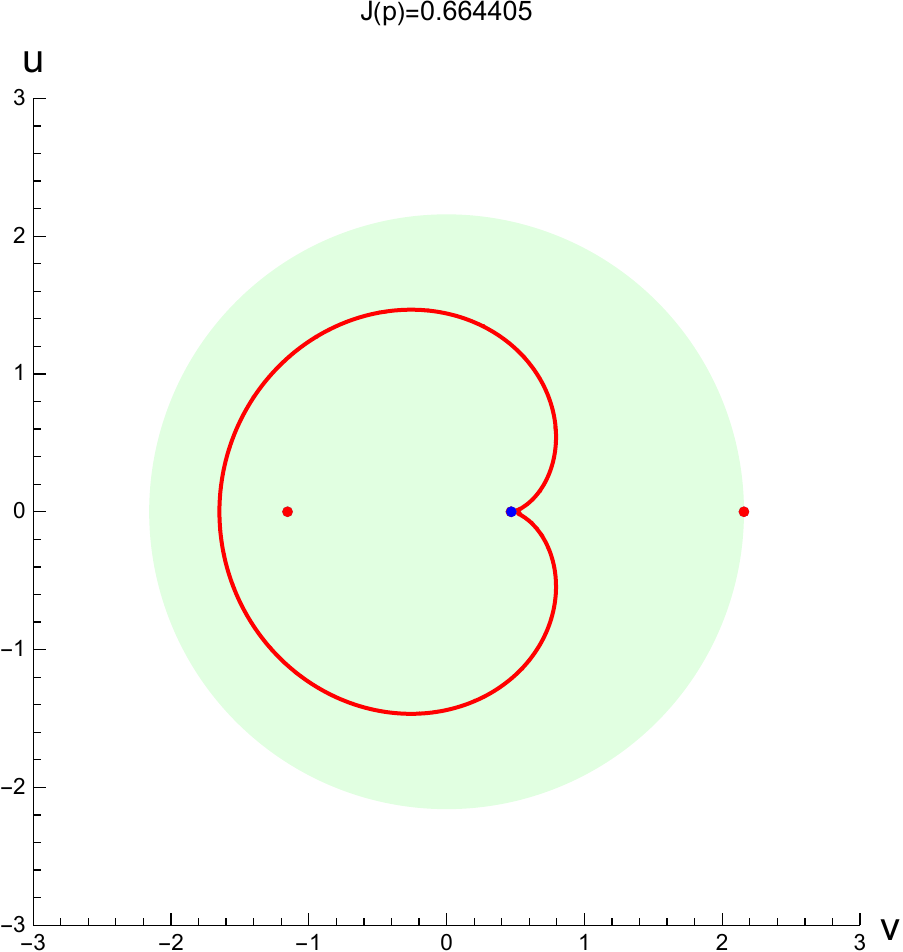}
\caption{The blue point $p$ at the `cusp' of the red curve corresponds to a parabolic point after dividing by the local $\mbS^1$-action. The shape of the red curve near the parabolic point motivates the notion of `cusps' or `cuspidal points' for parabolic points. This plot is done at $t=\left(\frac{1}{2}, \frac{1}{2}, \frac{1}{3}, 1 \right)$ and $J(p)=0.664405=:j$ for the system $(J, H_t)$ introduced and discussed in Section \ref{sec:defSystem}.} 
\label{fig:parabolicExample}
\end{figure}

Now we consider

\begin{definition}[Hohloch $\&$ Palmer \cite{hohloch2021extending}]
A {\em hypersemitoric system} is a 4-dimensional completely integrable system $(M,\omega,\mcF=(J,H))$ such that
\begin{enumerate}
\item 
$J: M \to \R$ is proper.
\item 
$J$ generates an effective $\mathbb{S}^1$-action.
\item 
All degenerate points of $\mcF=(J, H): M \to \R^2$ (if any) are parabolic.
\end{enumerate}
\end{definition}

Hypersemitoric systems are significantly more general than semitoric systems since they admit the full range of combinations of non-degenerate singular components except for hyperbolic-hyperbolic points that are prevented by the existence of the global $\mbS^1$-action (see for instance Hohloch $\&$ Palmer \cite{hohloch2021extending}). The occurrence of parabolic degenerate points in hypersemitoric systems was admitted since they appear naturally in combination with certain hyperbolic-regular points.


\subsection{Proper $\mathbb{S}^1$-systems}

Completing the line of generalisations from toric via semitoric to hypersemitoric systems, we are now dropping the assumption on the type of degeneracy of singular points:

\begin{definition}
A 4-dimensional completely integrable system $(M,\omega,\mathcal{F} = (J,H))$ is a {\em proper $\mathbb{S}^1$-system} if
\begin{enumerate}
    \item 
    $J: M \to \R$ is proper,
    \item 
    $J$ generates an effective $\mathbb{S}^1$-action on $M$.
\end{enumerate}
\end{definition}

The topological structure of the connected components of hyperbolic-regular fibres turned out to be completely determined by the quotient under the $\mbS^1$-action induced by $J$. In the following, we briefly recall the necessary notions for the construction.

\begin{definition}
An {\em undirected generalised bouquet $(G, S,C)$} consists of a compact topological space $G$ and finite disjoint subsets $S, C \subset G$ with $S \cup\ C \neq \emptyset$ such that
\begin{enumerate}
\item
$G \setminus \{ S \cup\ C\}$ is a smooth 1-dimensional manifold.

\item

For all $p \in S$ there exists a small open neighbourhood in $G$ which is homeomorphic to $\{ (x, y) \in \R^2 \mid xy =0, \ x\geq 0, \ y \geq 0 \}$ wherein $p$ corresponds to $(0, 0)$.

\item

For all $p \in C$ there exists a small open neighbourhood in $G$ which is homeomorphic to $\{ (x, y) \in \R^2 \mid xy =0\}$ wherein $p$ corresponds to $(0, 0)$.

\end{enumerate}
\end{definition}

Geometrically, $G \setminus \{ S \cup\ C\}$ is diffeomorphic to a disjoint union of open intervals. Moreover, near points of $S$, $G$ looks like a `corner' and, near points of $C$, like a `cross'. An example is sketched in Figure \ref{fig:teardrop}.

\begin{definition}
\label{de:genbouqfib1}
Let $(G,S,C)$ be an undirected generalised bouquet and consider the space $ G_S:= \faktor{ \bigl(G \times \{0, 1\} \bigr)} { \sim} $ where the equivalence relation $\sim$ is given by 
\begin{equation*}
(g, \si) \sim (\gti, \ti{\si}) 
\quad \IFF\  \quad
(g, \si)  = (\gti,\ti{\si}) \ \mbox{ {\bf or} } \
\si, \ti{\si} \in \{0, 1\} \mbox{ and } g, \gti \in S \mbox{ with } g= \gti.
\end{equation*}
This space carries the natural involution $T : G_S \to G_S$ given by $T(g, 0) = (g, 1)$ and $T(g, 1) = (g, 0)$ for all $g \in G$. We set $\mathcal{G}_S := \faktor{(G_S \times \R)}{ \simeq}$ where the equivalence relation $\simeq $ is given by $ (g, \si, r) \simeq (T(g, \si), r+ \pi) $.
\end{definition}

\begin{figure}[h]
    \centering
\begin{subfigure}[t]{.31\textwidth}
\centering
 \includegraphics[scale =0.15]{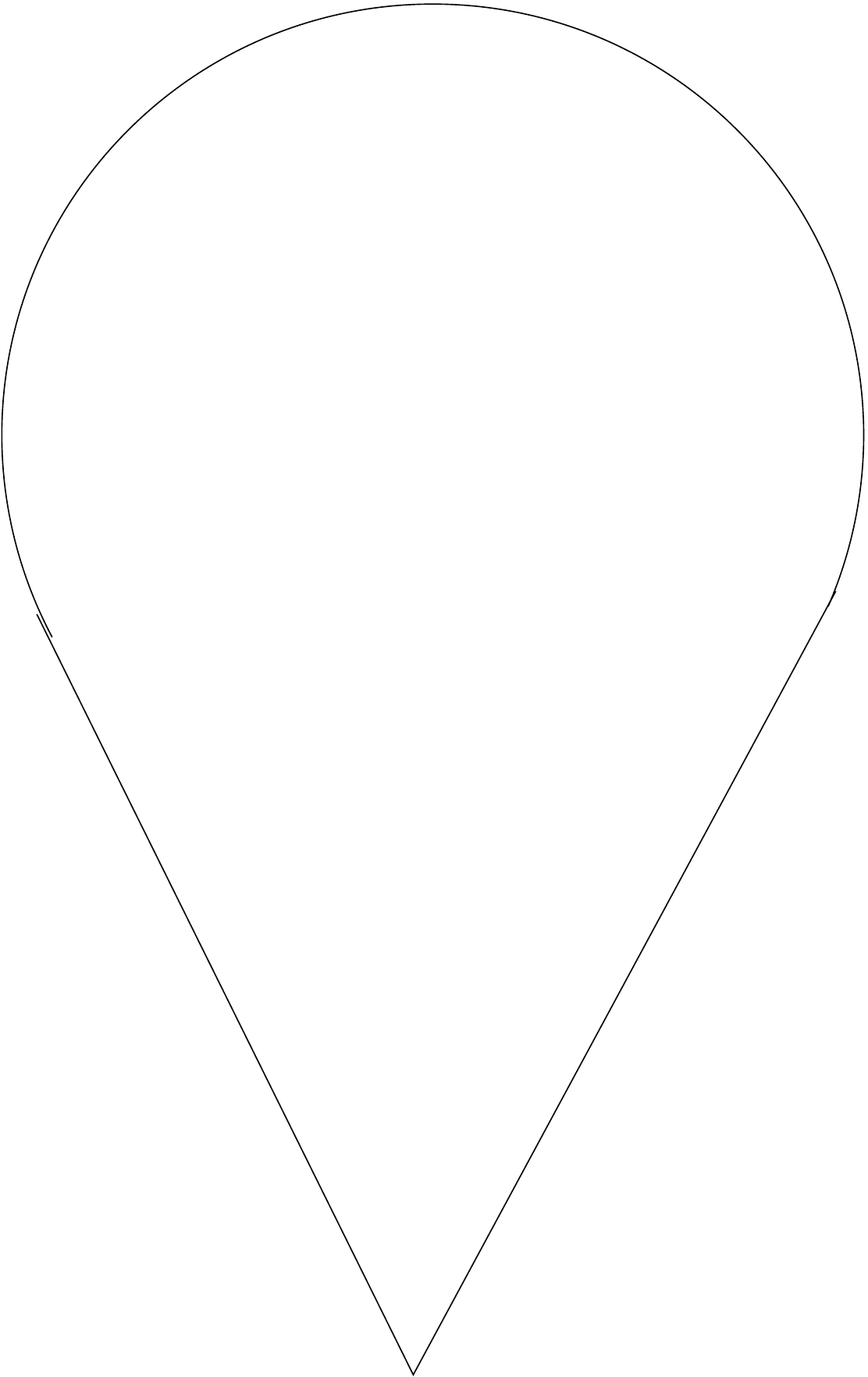}
   \caption{Teardrop.}
    \label{fig:teardrop}
 \end{subfigure}\
\begin{subfigure}[t]{.31\textwidth}
 \centering
\includegraphics[scale =0.6]{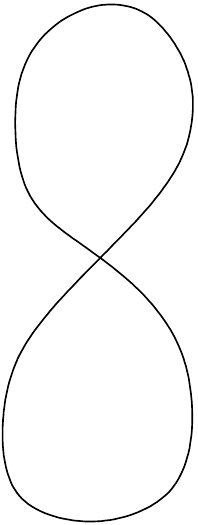}
\caption{Figure eight curve.}
 \label{subfig:FigureEight} 
\end{subfigure} \ 
\begin{subfigure}[t]{.31\textwidth}
\centering
\includegraphics[scale = .35]{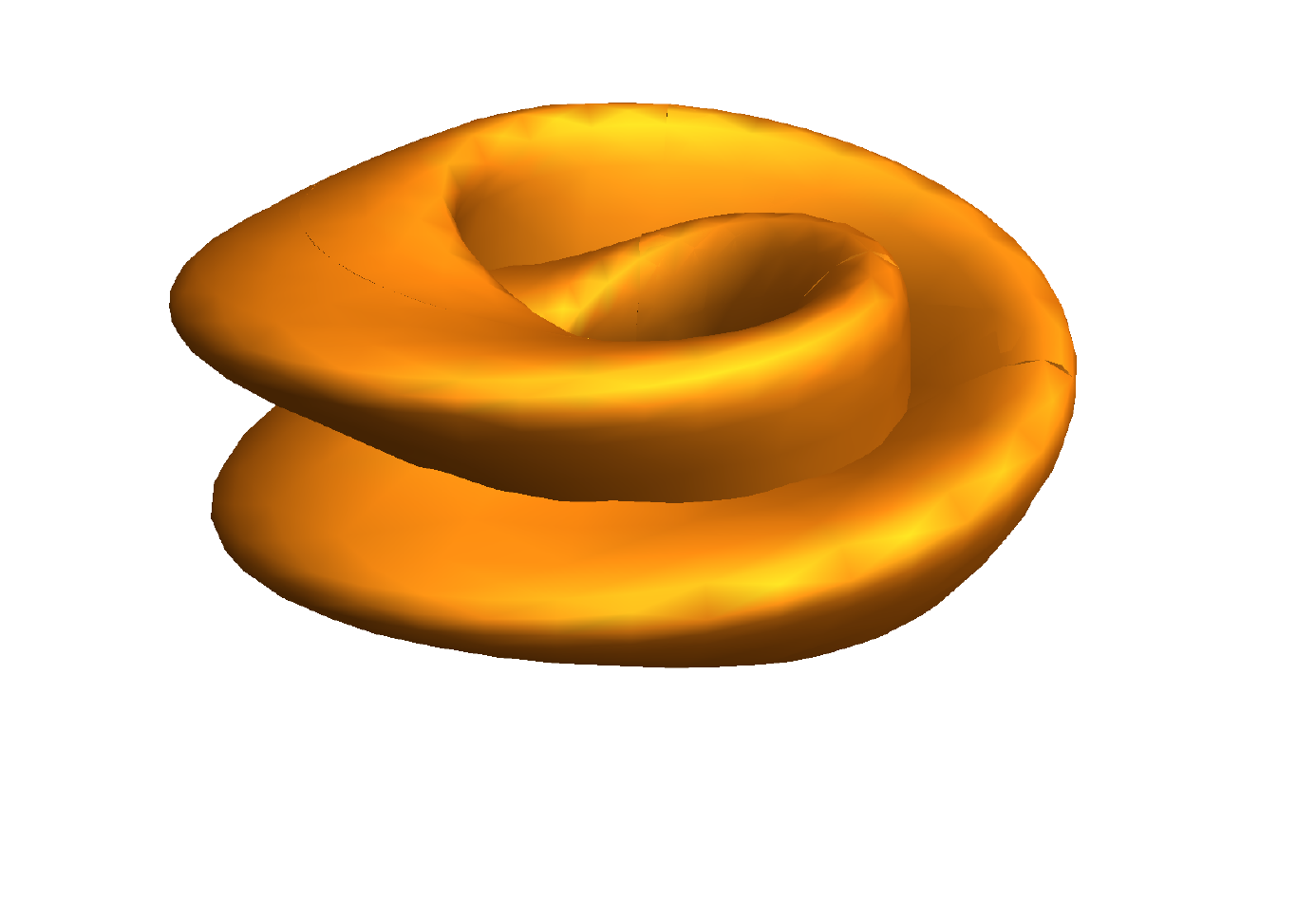} 
\caption{A twisted $2$-stacked torus.}
 \label{fig:twistDoubleTorus}
\end{subfigure}
\caption{The teardrop in (a) can be seen as a generalised bouquet $(G, S, C)$ with $C= \emptyset$ and $S$ consisting of the point at the tip of the teardrop. $G_S$ looks like a figure eight curve, see (b), and $\mcG_S$ is homeomorphic to a twisted $2$-stacked torus, see (c).}
\label{fig:exampleBouquet}
\end{figure}

As sketched in Figure \ref{fig:exampleBouquet}, the transition from $G$ to $G_S$ `turns corners into crosses' by gluing a `mirrored image' of $G$ to $G$. Passing from $G_S$ to $\mcG_S$ yields a twisted mapping torus of $G_S$. The geometric-dynamical meaning of these constructions becomes clear in the following statement:

\begin{theorem}[{Gullentops \cite[Theorem 1.9]{thesis:yannick}, Gullentops $\&$ Hohloch \cite{fiberGullandHohl}}]
\label{theo:main}
Up to homeomorphism, there exists a one-to-one correspondence between undirected generalised bouquets and hyperbolic-regular leafs of proper $\mathbb{S}^1$-systems in the following sense:
\begin{enumerate}
    \item
    Let $(M, \om, \mcF=(J, H))$ be a proper $\mathbb{S}^1$-system. 
    Given a leaf $\F$ of a hyperbolic-regular fibre then the associated undirected generalised bouquet is
    \begin{align*}
        G & :=G(\F):=\faktor{\mathbb{F}}{\mathbb{S}^1} \ \mbox{ where the quotient is taken w.r.t.\ the action induced by } J, \\
        S & := S(\F) := \left\{\left. [x] \in \faktor{\mathbb{F}}{\mathbb{S}^1} \ \right|  \    x  \in \F \text{ singular with } \Phi_\pi^J(x)=x \right\},\\
        C &: = C(\F):= \left\{\left. [x] \in \faktor{\mathbb{F}}{\mathbb{S}^1} \ \right| \ x \in \F \text{ singular with } \Phi_\pi^J(x) \neq x \right\}.
    \end{align*}
        
    \item 
    Given an undirected generalised bouquet $(G,S,C)$ there exists a proper $\mbS^1$-system having a hyperbolic-regular fibre homeomorphic to $\mathcal{G}_S$.
\end{enumerate}
\end{theorem}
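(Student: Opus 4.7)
The plan is to prove the two directions of the correspondence separately. For part (1), I first establish that the quotient $G(\F) = \F/\mathbb{S}^1$ has the claimed structure of an undirected generalised bouquet. Since $J$ is proper, $J^{-1}(j_0)$ is compact for any $j_0$, so the leaf $\F$ (a connected component of a compact fibre of $\mcF$) is compact, and since the $\mathbb{S}^1$-action generated by $J$ preserves each fibre and each connected component thereof, the quotient $G$ is well-defined, connected, and compact. To identify the local structure at each point of $G$, I use the local normal form of Theorem \ref{th:locNF}(1). At each hyperbolic-regular singular point $p \in \F$, there are symplectic coordinates $(x_1,x_2,\xi_1,\xi_2)$ in which the local model of the integrals is $(q_1,q_2) = (x_1\xi_1,\,\xi_2)$, so the leaf locally equals $\{x_1\xi_1=0\}\cap\{\xi_2=0\}$, a cross times an interval in the $x_2$-direction. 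Since the $\mathbb{S}^1$-action is generated by $J$, the orbits locally flow in the $x_2$-direction, and quotienting produces a cross-shaped neighbourhood in $G$. At regular points of $\F$ the local picture is a cylinder $\mathbb{S}^1 \times \R$, which quotients to a smooth $1$-manifold.

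The dichotomy between $C$ and $S$ now comes from the global $\mathbb{S}^1$-dynamics: at a hyperbolic-regular singular point $x$, the isotropy subgroup of $x$ in $\mathbb{S}^1$ is either trivial (yielding a full-period orbit) or contains $\{0,\pi\}$ (a half-period orbit). In the first case the local cross descends unchanged to the quotient, producing a point of $C$; in the second case the cross is folded by the induced $\mathbb{Z}/2\mathbb{Z}$-action into a single quadrant, producing a corner and hence a point of $S$. Finally, $S \cup C$ is discrete in $G$ because the hyperbolic-regular singular points of $\F$ come in finitely many $\mathbb{S}^1$-orbits (their union is compact and they are isolated transverse to the $\mathbb{S}^1$-direction), and combined with compactness this guarantees the finiteness required in the definition.

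For part (2), I would construct, for an arbitrary undirected generalised bouquet $(G,S,C)$, a proper $\mathbb{S}^1$-system realising $\mcG_S$ as a hyperbolic-regular leaf. The strategy is modular: near each point of $C$ use the Eliasson-type local model $(q_1,q_2) = (x_1\xi_1,\,\xi_2)$ on a neighbourhood $U_C \cong \R^3 \times \mathbb{S}^1$ with the standard symplectic form; near each point of $S$ take the $\mathbb{Z}/2\mathbb{Z}$-quotient of this model by an involution exchanging the two branches of $\{x_1\xi_1=0\}$ and reversing the $\mathbb{S}^1$-parameter by $\pi$, which produces the required corner after passing to the quotient; along each smooth edge of $G \setminus (S\cup C)$ glue in a tubular piece modelled on $T^{*}\mathbb{S}^1 \times (0,1)$ with $\mcF = (J,H) = (\xi,\,\xi_2)$. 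The patchwise models are then glued equivariantly along boundary collars to produce a compact $4$-manifold with an effective Hamiltonian $\mathbb{S}^1$-action and a second integral $H$ whose relevant leaf is homeomorphic to $\mcG_S$, since the involutive doubling $G\rightsquigarrow G_S\rightsquigarrow\mcG_S$ of Definition \ref{de:genbouqfib1} is exactly the abstract shadow of this gluing: doubling along $S$ encodes the passage from corners back to crosses before quotienting, and twisting by $\pi$ in the $\mathbb{S}^1$-factor encodes the half-period isotropy.

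The hardest step will be the global gluing in part (2): the local models must be glued in an $\mathbb{S}^1$-equivariant symplectic way so that $J$ remains proper, the action remains effective, and the topology of the resulting hyperbolic-regular leaf matches $\mcG_S$ on the nose. The combinatorics of $G$ determines the gluing pattern, but upgrading this to smooth symplectic gluings (and verifying that no extra degenerate singularities are created along the edges) requires the standard equivariant symplectic neighbourhood theorems and a careful choice of cut-off functions; the twisted mapping-torus structure built into $\mcG_S$ corresponds exactly to the $\mathbb{Z}/2\mathbb{Z}$-isotropy patching near corner points and is what makes the construction globally consistent.
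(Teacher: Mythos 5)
This statement is not proved in the paper at all: it is imported verbatim from Gullentops \cite[Theorem 1.9]{thesis:yannick} and Gullentops--Hohloch \cite{fiberGullandHohl}, so there is no in-paper argument to compare your proposal against; I can only judge the proposal on its own terms, and as it stands it has genuine gaps. The most serious one concerns what the theorem is actually used for. The ``one-to-one correspondence up to homeomorphism'' implicitly contains the reconstruction statement that the original leaf $\F$ is itself homeomorphic to $\mathcal{G}_{S(\F)}$ built from its quotient data; this is exactly how the theorem is applied in Examples \ref{ex:lemniscate}, \ref{ex:tritorus}, \ref{ex:quadtorus} and in Step 9 of Theorem \ref{th:maxfibre}. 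Your part (1) only shows (modulo the points below) that the quotient is a generalised bouquet, and your part (2) produces \emph{some other} system realising a given bouquet; nowhere do you show that $\F$ carries the structure of the (possibly twisted) mapping torus $\mathcal{G}_S$ over $G$, i.e.\ that the $\mbS^1$-orbit structure over the edges of $G$ is trivialisable, that the periods are locally constant along edges, and that the only monodromy is the $\Z/2\Z$ half-period twist recorded by $S$. Without this, the correspondence is neither injective nor usable in the way the paper uses it.

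Two further steps need repair. First, you invoke Theorem \ref{th:locNF}(1) to claim that near a hyperbolic-regular point the leaf equals $\{x_1\xi_1=0\}\cap\{\xi_2=0\}$, but item (2) of that theorem explicitly restricts the conclusion ``$\mcF$-fibres coincide with model fibres'' to the case without hyperbolic components; with a hyperbolic component the relations $\{q_i,\mcF_j\}=0$ alone do not give this, and one should instead argue via the reduced space (as in Lemma \ref{lem:red}: $H^{red}$ has a non-degenerate hyperbolic critical point, so its level set is locally a cross by the Morse lemma), taking care that reduction is only straightforward where the action is free and otherwise working in an orbifold/slice chart. Second, the isotropy dichotomy is asserted rather than proved: you must rule out isotropy groups other than $\{e\}$ and $\Z/2\Z$ at hyperbolic-regular points (e.g.\ by showing that an isotropy element commuting with the linearised hyperbolic flow acts on the transversal as $\pm\mathrm{Id}$, and that acting trivially on a slice contradicts effectiveness), and you must show that the nontrivial element folds the cross into a corner rather than identifying the two branches, which is what justifies the definitions of $S$ versus $C$. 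Finally, part (2) is only a plan: the equivariant symplectic gluing of the local models, the verification that $J$ stays proper and the action effective, and the check that no unwanted degenerate singularities are created along the edges are precisely the content of the cited construction and are not carried out here.
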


Note that the content of Theorem \ref{theo:main}, on the one hand, expands and, on the other hand, partially recovers results from the two following groups of authors. 
\begin{itemize}
 \item 
Bolsinov $\&$ Fomenko \cite{bolsinov_fomenko_2004}, Bolsinov $\&$ Oshemkov \cite{Bolsinov2006SingularitiesOI} and others devised a semi-local classification of non-degenerate points of completely integrable systems based on the notion of atoms. 
 \item
Colin de Verdi\`ere $\&$ \vungoc\ \cite{Verdire2000SINGULARBR} did a semi-local classification of hyperbolic-regular leafs.
\end{itemize}


\subsection{Symplectic reduction}

Delzant's \cite{Delzant1988HamiltoniensPE} construction is based on starting with the symplectic manifold $(\mathbb{C}^n, \frac{i}{2}\sum_{j=1}^n dz_j \wedge d\bar{z}_j)$ for a certain $n \in \N$ and passing to a symplectic quotient by various actions. For the reader's convenience, let us briefly recall the necessary hypothesis and resulting statement for passing to symplectic quotients adapted to our setting: consider the following specialised version of the so-called Marsden-Weinstein theorem which describes in more generality symplectic reduction for Hamiltonianian Lie group actions.

\begin{theorem}[{Audin \cite[Proposition III.2.15]{Audintorus}}]
\label{marsdenWeinsteinToric}
Let $(M, \om)$ be a $2n$-dimensional symplectic manifold and $\mcF: M \to \mathbb{R}^m$ the momentum map of an $m$-torus action. Let $t \in \R^m$ be a regular value and assume that the $m$-torus (denoted by $\T^m$) acts freely on the regular level set $\mcF^{-1}(t)$.
Then the {\em reduced space} (or {\em symplectic quotient}) 
$$M^{red, t}:= (M /\!\!/ \mathbb{T}^m)_t :=   \mcF^{-1}(t) \slash \mathbb{T}^m$$
is a symplectic manifold with symplectic form $\om^{red, t}$. It satisfies $\tau^*\om^{red ,t}= \ka^* \om$ where $\tau: \mcF^{-1}(t) \to M^{red, t}$ denotes the quotient map and $\ka: \mcF^{-1}(t) \hookrightarrow M$ the inclusion. 
\end{theorem}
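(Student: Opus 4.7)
The plan is to follow the standard Marsden--Weinstein reduction argument, specialised to the torus setting. First I would verify the smoothness of the reduced space. Since $t$ is a regular value of $\mcF$, the preimage $\mcF^{-1}(t)$ is a smooth submanifold of codimension $m$ by the regular value theorem. Since $\T^m$ is compact, its action on $\mcF^{-1}(t)$ is automatically proper, and by hypothesis it is free; so the quotient $M^{red,t} = \mcF^{-1}(t)/\T^m$ inherits a unique smooth manifold structure of dimension $2n-2m$ for which the projection $\tau$ is a surjective submersion.

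Next I would construct the reduced two-form. The key linear-algebraic fact is that, writing $\mcF_1,\dots,\mcF_m$ for the components of the momentum map, for each $p \in \mcF^{-1}(t)$ one has
$$
T_p \mcF^{-1}(t) \ =\ \ker d\mcF(p) \ =\ \bigl( T_p(\T^m \cdot p)\bigr)^{\om_p},
$$
where the superscript denotes the symplectic orthogonal. Indeed, for every $v \in T_pM$ and every generator $X^{\mcF_i}$ of the torus action one has $\om_p(X^{\mcF_i}(p),v) = d\mcF_i(p)\,v$, so $v \in \ker d\mcF(p)$ precisely when $v$ is $\om_p$-orthogonal to every orbit direction. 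In particular $T_p\mcF^{-1}(t)$ is coisotropic and contains the orbit tangent space, so the quotient
$$
T_p\mcF^{-1}(t) \, \big/ \, T_p(\T^m \cdot p)
$$
inherits a well-defined, nondegenerate skew-symmetric form from $\om_p$ by the standard coisotropic quotient construction from symplectic linear algebra.

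With this in hand, I would define $\om^{red,t}$ on $M^{red,t}$ by the required identity $\tau^*\om^{red,t} = \ka^*\om$. To see this is legitimate one checks that the two-form $\ka^*\om$ descends along the principal $\T^m$-bundle $\tau$, i.e.\ that (a) $\ka^*\om$ is $\T^m$-invariant, which follows from $L_{X^{\mcF_i}}\om = 0$ since each $X^{\mcF_i}$ is Hamiltonian, and (b) for every $v \in T_p\mcF^{-1}(t)$ and every orbit direction $X^{\mcF_i}(p)$ one has $(\ka^*\om)(X^{\mcF_i}(p),v) = d\mcF_i(p)\,v = 0$, which is exactly the linear-algebra observation above. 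Standard descent for differential forms along a principal bundle (invariance plus horizontality) then produces a unique smooth two-form $\om^{red,t}$ on $M^{red,t}$ with $\tau^*\om^{red,t}=\ka^*\om$. Nondegeneracy of $\om^{red,t}$ follows pointwise from the symplectic quotient structure at each $p$, and closedness follows from
$$
\tau^* d\om^{red,t} \ =\ d\tau^*\om^{red,t} \ =\ d\ka^*\om \ =\ \ka^* d\om \ =\ 0
$$
together with injectivity of $\tau^*$ on forms on the base (since $\tau$ is a surjective submersion).

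The main technical obstacle is the identification $\ker d\mcF(p) = \bigl(T_p(\T^m\cdot p)\bigr)^{\om_p}$, which is what makes the whole construction work; once it is established, the remaining verifications (coisotropy, $\T^m$-invariance, horizontality, well-definedness of the descent, closedness, and pointwise nondegeneracy) are essentially formal. A secondary subtlety is arguing that freeness on the single level set $\mcF^{-1}(t)$ (rather than globally) is sufficient, but this is handled by noting that properness of the $\T^m$-action is automatic by compactness, so a free proper action still yields a principal bundle structure on $\tau : \mcF^{-1}(t)\to M^{red,t}$.
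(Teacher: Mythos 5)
Your proposal is correct: it is the standard Marsden--Weinstein reduction argument (regular value theorem, the identity $\ker d\mcF(p) = \bigl(T_p(\T^m\cdot p)\bigr)^{\om_p}$, coisotropic linear reduction, and descent of the basic form along the principal $\T^m$-bundle), which is precisely the argument behind the result the paper cites from Audin without reproving it. No gaps worth flagging beyond the tacit (and standard) use of the fact that the momentum map components of a torus action Poisson commute, so the orbits lie inside the level set.
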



\section{The toric system constructed from the octagon}
\label{sec:theoctsyst}

In order to study transitions from elliptic-elliptic to focus-focus points and collisions of focus-focus fibres more closely, De Meulenaere $\&$ Hohloch \cite{meulenaere2019family} first constructed via Delzant's \cite{Delzant1988HamiltoniensPE} construction the toric system that has the octagon from Figure \ref{fig:octagon} as image of the momentum map. 
Then they perturbed this toric system in order to obtain a family of systems that is semitoric apart from the parameter values where the transitions and collisions take place. 

The aim of the present paper is to start with the toric system corresponding to the octagon in Figure \ref{fig:octagon} and then find perturbation terms that yield singularities with hyperbolic components and certain topological properties.

We will now briefly recall from De Meulenaere $\&$ Hohloch \cite[Section 3]{meulenaere2019family} the most important facts of the toric system constructed from the octagon in Figure \ref{fig:octagon}.


\subsection{The symplectic manifold}
Denote the octagon displayed in Figure \ref{fig:octagon} by $\De$ and the standard symplectic form on $\C^n$ by $\om_{st}:= \frac{i}{2}\sum_{j=1}^n dz_j \wedge d\bar{z}_j$.
The construction of a symplectic manifold $(M, \om)$ and a toric momentum map $\mcF:(M, \omega) \to \R^2 $ with $\mcF(M)= \De$ is done in De Meulenaere $\&$ Hohloch \cite[Section 3]{meulenaere2019family} and yields the map 
\begin{align*}
    \mathcal{L}:(\mathbb{C}^8, \om_{st}) &\rightarrow \mathbb{R}^6, 
    \qquad \qquad 
    \begin{pmatrix}
    z_1\\
    z_2\\
    z_3\\
    z_4\\
    z_5\\
    z_6\\
    z_7\\
    z_8
    \end{pmatrix}
    \mapsto \frac{-1}{2}
    \left(
    \begin{aligned}
    |z_1|^2+|z_5|^2-6\\
    |z_2|^2+|z_5|^2+|z_7|^2-10\\
    |z_3|^2+|z_7|^2 -6\\
    |z_4|^2-|z_5|^2+|z_7|^2-4\\
    |z_5|-|z_6|+|z_7|^2 -2\\
    |z_5|^2-|z_7|^2+|z_8|-4
    \end{aligned}
    \right)
\end{align*}
which generates a $\mathbb{T}^6$-action. The six equations in the definition of $\mcL$ are also referred to as {\em manifold equations}.
After applying Theorem \ref{marsdenWeinsteinToric} at level zero, we obtain the symplectic quotient  
\begin{align*}
    M:= M^{red, 0}:=   \mcL^{-1}(0) \slash \mathbb{T}^6 \quad \mbox{with} \quad \om:= \om^{red, 0}.
\end{align*}
Elements of $\mathcal{L}^{-1}(0) \subset \C^8$ are denoted by $(z_1, \dots, z_8)$ and elements of the quotient $M =\mcL^{-1}(0) \slash \mathbb{T}^6$ by $[z_1, \dots, z_8]$ unless we work with representatives for which we use again the notation $(z_1, \dots, z_8)$.

The desired momentum map $\mcF$ that generates a $\T^2$-action and satisfies $\mcF(M) = \De$ is given by $\mcF:=(J,H): (M, \om) \to \R^2$ with
\begin{align*}
    J(z_1,...,z_8) = \frac{|z_1|^2}{2} 
    \quad \mbox{and} \quad
    H(z_1,...,z_8) = \frac{|z_3|^2}{2}.
\end{align*}
We recall and summarise

\begin{theorem}[{De Meulenaere $\&$ Hohloch~\cite[Theorem 3.6]{meulenaere2019family}}]
\label{th:toricOctagonSys}
The above constructed symplectic manifold $(M, \om)$ is 4-dimensional, compact, and connected and $(M,\omega,\mathcal{F}=(J,H))$ is a toric system. In particular, $J$ generates an effective $\mathbb{S}^1$-action.
\end{theorem}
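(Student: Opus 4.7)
The plan is to apply Delzant's construction (the recipe underlying Theorem \ref{theo:delzant}) directly to $\De$. First I would verify that $\De$ is a Delzant polytope: simplicity and rationality are immediate from the vertex list in Figure \ref{fig:octagon}, and for smoothness one checks at each of the eight vertices that the two primitive outward edge directions form a $\Z$-basis of $\Z^2$; for instance at $(0,1)$ the outward directions are $(0,1)$ and $(1,-1)$, whose determinant is $-1$, and an analogous check at each of the remaining seven vertices is routine.

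Next I would set up the standard Delzant diagram. Let $u_1,\dots,u_8 \in \Z^2$ be the primitive inward normals to the eight edges of $\De$, and let $\lam_1,\dots,\lam_8 \in \R$ be the constants in the defining inequalities $\langle u_k, x\rangle + \lam_k \geq 0$. The surjection $\pi : \R^8 \to \R^2$ sending the $k$-th standard basis vector to $u_k$ fits into a short exact sequence $0 \to \mfk \to \R^8 \to \R^2 \to 0$ with $\mfk \cong \R^6$ and, by duality, into a short exact sequence of tori $0 \to \T^6 \to \T^8 \to \T^2 \to 0$. The $\T^6$-subaction of the standard $\T^8$-action on $(\C^8, \om_{st})$ then has momentum map obtained by composing the standard momentum map $z \mapsto \tfrac{1}{2}(|z_1|^2,\dots,|z_8|^2)$ with the dual of the inclusion $\mfk \hookrightarrow \R^8$ and translating by $(\lam_1,\dots,\lam_8)$; reading off the normals and constants from Figure \ref{fig:octagon} and plugging in should reproduce, up to sign conventions, the map $\mcL$ stated above.

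Third, I would check that $0$ is a regular value of $\mcL$ and that $\T^6$ acts freely on $\mcL^{-1}(0)$. Both properties follow from the Delzant smoothness already verified, since the stabilizer at a point whose vanishing coordinates are indexed by $\{k_1,\dots,k_\ell\}$ is the subtorus of $\T^6$ cut out by $u_{k_1},\dots,u_{k_\ell}$, and this is trivial because any such subset of normals extends to a $\Z$-basis of $\Z^2$. Compactness of $\mcL^{-1}(0)$ follows by rearranging the six defining equations to produce uniform upper bounds on every $|z_k|^2$, realising $\mcL^{-1}(0)$ as a closed bounded subset of $\C^8$; connectedness is a standard consequence of connectedness of $\De$ together with connectedness of torus fibres. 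Applying Theorem \ref{marsdenWeinsteinToric} then yields a compact connected symplectic manifold $(M,\om)$ of dimension $2\cdot 8 - 2\cdot 6 = 4$.

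Finally, the quotient torus $\T^8/\T^6 \cong \T^2$ acts on $M$ with momentum map induced by the $\T^8$ momentum map; choosing a section that identifies the two generators of this $\T^2$ with the first and third standard factors of $\T^8$, the two components of this momentum map descend to $J = \tfrac{1}{2}|z_1|^2$ and $H = \tfrac{1}{2}|z_3|^2$ on $M$. The Delzant construction then automatically yields $\mcF(M) = \De$ and effectiveness of the $\T^2$-action, and since $u_1 = (1,0)$ is primitive the $\mbS^1$-subaction generated by $J$ alone is also effective. The main obstacle in carrying all of this out is really bookkeeping, namely the vertex-by-vertex smoothness check and the identification of the abstract Delzant output with the explicit formula for $\mcL$; every other step is an instance of the general Delzant/Marsden--Weinstein machinery.
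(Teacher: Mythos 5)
Your proposal is correct and follows essentially the same route as the paper: the statement is quoted from De Meulenaere $\&$ Hohloch, whose proof (summarised in Section \ref{sec:theoctsyst}) is exactly the Delzant construction applied to the octagon $\De$ — checking the Delzant conditions, reducing $(\C^8,\om_{st})$ by the $\T^6$-action with momentum map $\mcL$ at level $0$ via the Marsden--Weinstein theorem, and letting the residual $\T^2$-action descend to the momentum map $(J,H)$ with image $\De$. The steps you label as routine (vertex-by-vertex smoothness, freeness of the $\T^6$-action on $\mcL^{-1}(0)$, compactness and connectedness of the level set) are precisely the bookkeeping carried out in the cited reference, so no genuine gap remains.
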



\subsection{Coordinate charts}
\label{section:coordCharts}

The manifold $M$ can be covered by explicit coordinate charts that are constructed by means of the six manifold equations and the $\T^6$-action on $\mcL^{-1}(0)$. More precisely, De Meulenaere $\&$ Hohloch~\cite[Lemma 3.3 and discussion afterward]{meulenaere2019family} show that $M =\bigcup_{\nu=1}^8 U_\nu $ where 
$$ 
U_\nu := \left\lbrace \ [z_1, \ldots, z_8] \in M \mid z_k \neq 0 \mbox{ for all } k \in \{\nu+2, \ldots, \nu+7\} \Mod 8 \ \right\rbrace 
$$
for $1 \leq \nu \leq 8$. Note that $U_\nu$ is the only subset of $M$ where $z_\nu$ {\em and} $z_{\nu+1}$ may vanish.
Since there are at least six variables nonzero among $[z_1, \dots, z_8] \in U_\nu$ one can use the $ \T^6$-action to choose strictly positive real numbers as representatives for them. If we write $z_k=x_k+i y_k$ for $1 \leq k \leq 8$, this means $y_k=0 $ and $x_k >0$ for these six representatives. Thus, for example, we may represent $U_1$ by points of the form
$$(z_1, \dots, z_8) = (x_1, y_1, x_2, y_2, x_3, 0, x_4, 0, x_5, 0, x_6, 0, x_7, 0, x_8, 0)$$
with $x_3, \dots, x_8>0$.
By means of the manifold equations and by abbreviating $\vert z_1 \vert^2 = x_1^2 + y_1^2$ and $\vert z_2 \vert^2 = x_2^2 + y_2^2$, the variables $x_3, \dots, x_8$ can be expressed in $U_1$ as functions of $x_1$, $y_1$, $x_2$, $y_2$ via 
\begin{align*}
x_3 &= \sqrt{2 - \vert z_1 \vert^2 + \vert z_2 \vert^2},   &&&   x_5 &= \sqrt{6 - \vert z_1 \vert^2}, &&&  x_7 &= \sqrt{4 + \vert z_1 \vert^2 - \vert z_2 \vert^2}, \\ 
x_4 &= \sqrt{6 - 2\vert z_1 \vert^2 + \vert z_2 \vert^2}, &&& x_6 & = \sqrt{8 - \vert z_2 \vert^2},  &&& x_8 &= \sqrt{2 + 2\vert z_1 \vert^2 - \vert z_2 \vert^2}.
\end{align*}
This leads to charts $\phi_\nu: \C^2 \to U_\nu$ for all $1 \leq \nu \leq 8$ of which we exemplarily write down the first one:
\begin{equation*}
\phi_1: \C^2 \to U_1, 
\qquad 
\phi_1(x+iy,u+iv) = 
\left(
\begin{aligned}
    & x+iy\\
    & u+iv\\
    & \sqrt{2-x^2-y^2+u^2+v^2}\\
     & \sqrt{6-2x^2-2y^2+u^2+v^2}\\
    & \sqrt{6-x^2-y^2}\\
    & \sqrt{8-u^2-v^2}\\
    & \sqrt{4+x^2+y^2-u^2-v^2}\\
    & \sqrt{2+2x^2+2y^2-u^2-v^2}
\end{aligned}
\right)
\end{equation*}
The set $U_\nu$ consists thus of those points in $\mathbb{C}^2$ for which the expressions under the square roots are strictly positive. In particular, $U_\nu$ is completely determined by its image under the `coordinate distance map' $ (x+iy,u+iv) \mapsto (|x+iy|,|u+iv|)$ plotted in Figure \ref{fig:imp1domain}.

\begin{figure}[ht]
    \centering
    \includegraphics[scale =.4]{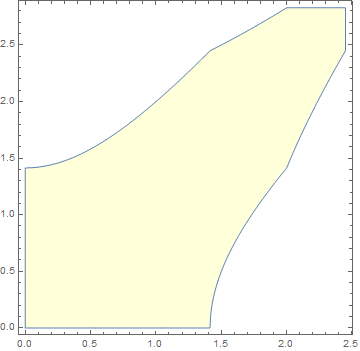}
    \caption{The `coordinate distances' of the points in $U_1$ to the origin are displayed by the image of the map $ (x+iy,u+iv) \mapsto (|x+iy|,|u+iv|)$ for all points with $\phi_1(x+iy,u+iv) \in U_1$. Later on, we refer to this image as $D_1$.}
     \label{fig:imp1domain}
\end{figure}


\begin{lemma}[{De Meulenaere $\&$ Hohloch \cite[Section 3.3]{meulenaere2019family}}]
\label{lem:sympform}
The symplectic form $\om$ on $M$ becomes the standard symplectic form in the charts $(U_\nu, \phi_\nu)$ for all $1 \leq \nu \leq 8$, more precisely, we have for all $1 \leq \nu \leq 8$
\begin{align*}
    \phi_\nu^*\omega =  dx \wedge dy + du\wedge dv. 
\end{align*}
\end{lemma}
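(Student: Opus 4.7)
The plan is to realise the chart map $\phi_\nu$ as a smooth lift into $\mcL^{-1}(0) \subset \C^8$ followed by the symplectic quotient map $\tau$, and then apply Marsden--Weinstein (Theorem \ref{marsdenWeinsteinToric}) to reduce everything to a pullback of the standard symplectic form on $\C^8$. Concretely, the formula defining $\phi_\nu$ already provides a smooth map $\ti\phi_\nu : D_\nu \to \C^8$ whose image lies in $\mcL^{-1}(0)$: this is a direct check from the six manifold equations, as the last six coordinates are precisely the positive square roots designed to enforce $\mcL=0$. The $\T^6$-gauge choice used in Section \ref{section:coordCharts} to select representatives with six real positive components amounts to saying that $\tau \circ \ti\phi_\nu = \phi_\nu$ when the latter is viewed as a map into $M$.

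By Theorem \ref{marsdenWeinsteinToric} one has $\tau^*\om = \ka^*\om_{st}$ where $\ka: \mcL^{-1}(0)\hookrightarrow\C^8$ is the inclusion, and therefore
$$\phi_\nu^*\om \;=\; \ti\phi_\nu^* \tau^* \om \;=\; \ti\phi_\nu^* \ka^* \om_{st}.$$
Writing $z_j = x_j + i y_j$ and using $\om_{st} = \tfrac{i}{2}\sum_j dz_j\wedge d\bar z_j = \sum_{j=1}^{8} dx_j\wedge dy_j$, the computation reduces to pulling back the eight summands individually.

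For $\nu=1$, the lift reads
$$\ti\phi_1(x+iy,u+iv) \;=\; \bigl(x+iy,\ u+iv,\ \sqrt{\,\cdot\,},\ \sqrt{\,\cdot\,},\ \sqrt{\,\cdot\,},\ \sqrt{\,\cdot\,},\ \sqrt{\,\cdot\,},\ \sqrt{\,\cdot\,}\bigr),$$
so the first two components contribute exactly $dx\wedge dy$ and $du\wedge dv$. Crucially, for $j\geq 3$ the component $z_j$ is \emph{identically real-valued} on $D_1$, hence $\ti\phi_1^*(dy_j)\equiv 0$ and each of the remaining six summands $dx_j\wedge dy_j$ pulls back to zero. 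This yields $\phi_1^*\om = dx\wedge dy + du\wedge dv$, as claimed.

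The argument is verbatim the same for every other chart: by construction of the charts in Section \ref{section:coordCharts}, the only complex-valued components of $\ti\phi_\nu$ are those with indices $\nu,\nu+1 \pmod 8$, and the other six are real square roots of polynomial expressions in $(x,y,u,v)$, which kill the corresponding $dy$-factors. The only part requiring care is bookkeeping, namely verifying that each real square-root component is well-defined and smooth on the whole domain $D_\nu$; this is already built into the description of $U_\nu$ and its domain and so presents no genuine obstacle.
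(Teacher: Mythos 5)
Your proposal is correct: factoring $\phi_\nu = \tau \circ \ti\phi_\nu$ through the level set $\mcL^{-1}(0)$, invoking $\tau^*\om = \ka^*\om_{st}$ from Theorem \ref{marsdenWeinsteinToric}, and observing that the six square-root components are real-valued (so the corresponding $dx_j\wedge dy_j$ terms pull back to zero) is exactly the computation behind this lemma. The paper itself only cites De Meulenaere $\&$ Hohloch \cite[Section 3.3]{meulenaere2019family} for it, and your argument is essentially the same reduction-plus-real-section calculation carried out there, so no further comment is needed.
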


Recall that, for all $1 \leq j, k \leq 8$, the functions $|z_j|^2$ and $|z_k|^2$ Poisson commute. Moreover, if functions Poisson commute with $|z_j|^2$ then linear combinations of them also Poisson commute with $|z_j|^2$. Thus every component of $\mathcal{L}$ Poisson commutes with $|z_j|^2$ for all $j$. 

\begin{remark}
The $\mathbb{T}^6$-action generated by $\mathcal{L}$ preserves the norm of $z_j$ for all $j$.
\end{remark}


\section{Our family of proper $\mathbb{S}^1$-systems}
\label{sec:defSystem}

The idea is to perturb the toric octagon system $\bigl(M, \om , \mcF=(J, H)\bigr)$ from Theorem \ref{th:toricOctagonSys} to obtain a family of proper $\mbS^1$-systems that display various singularities with hyperbolic components. Since $M$ is compact, $J$ is certainly proper. Moreover, it generates an effective $\mathbb{S}^1$-action. So in order to construct a suitable family of proper $\mbS^1$-systems, we keep $(M, \om)$ and $J$ as they are and only perturb $H$ to a family $(H_t)_{t \in \R^{4}}$ such that $\left\{J,H_t\right\}=0$ for all $t \in \R^4$. The family $(J,H_t)_{t \in \R^4}$ will be our candidate.


\subsection{The explicit family}
\label{subsec:defSystem}
Denote by $\Re$ the real part of a complex number.
We consider $t:=(t_1, t_2, t_3, t_4) \in \R^4$ and define the function 
\begin{align*}
   H_t: M \to \R, \qquad  H_t:= H_{(t_1,t_2,t_3,t_4)} := (1-2 t_1)H + \sum_{j = 1}^4 t_j\gamma_j 
\end{align*}
where $\gamma_j:M \rightarrow \mathbb{R}$ for $1 \leq j \leq 4$ are given by
\begin{align*}
    \gamma_1(z_1,...,z_8)   := \frac{1}{100} \ \bigl(\overline{z_2z_3z_4}z_6z_7z_8+z_2z_3z_4\overline{z_6z_7z_8}\bigr) = \frac{1}{50} \ \Re(\overline{z_2z_3z_4}z_6z_7z_8), 
\end{align*}    
and
 \begin{align*}   
     \gamma_{2}(z_1,...,z_8) & : =\frac{1}{50} \ |z_5|^4|z_4|^4,\\
    \gamma_{3}(z_1,...,z_8) & : = \frac{1}{50} \ |z_4|^4|z_7|^4,\\
    \gamma_{4}(z_1,...,z_8) & : = \frac{1}{100} \ |z_5|^4|z_7|^4.
\end{align*}

The map $\gamma_1$ is inspired by the perturbation in De Meulenaere $\&$ Hohloch \cite{meulenaere2019family} that forced certain elliptic-elliptic points to turn into focus-focus points. The maps $\ga_2$, $\ga_3$, $\ga_4$ are inspired by the work of Palmer $\&$ Le Floch \cite{floch2019semitoric}. 
Note in addition that these maps are compatible with the $\T^6$-action defined at the end of Section \ref{section:coordCharts} and therefore also descend to the symplectic quotient.

These perturbations will cause the occurence of hyperbolic-regular fibres.
The coefficients $\frac{1}{50}$ and $\frac{1}{100} $ in $\ga_1, \dots, \ga_4$ rescale the functions and parameter values to a more convenient (in particular plottable) range.

Now we will show that $F_t:=(J, H_t): (M, \om) \to \R^2 $ forms a proper $\mbS^1$-system for all $t \in \R$. We start with

\begin{proposition}\label{prop:gammacommute}
$\ga_1$, $\gamma_2,\gamma_3$ and $\gamma_4$ Poisson commute with $J$. In particular, $H_t$ also Poisson commutes with $J$ for all $t \in \R^4$. 
\end{proposition}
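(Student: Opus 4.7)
The plan is to do the Poisson bracket computation upstairs on the ambient space $(\mathbb{C}^8, \om_{st})$ first, and then descend the vanishing to the symplectic quotient $M$ via the standard principles of symplectic reduction. The Hamiltonian vector field on $\mathbb{C}^8$ of the lift $\tfrac{1}{2}|z_1|^2$ of $J$ is the infinitesimal generator of the rotation $z_1 \mapsto e^{i\theta}z_1$ and it fixes $z_2, \ldots, z_8$. Consequently, a smooth function on $\mathbb{C}^8$ Poisson commutes with $\tfrac{1}{2}|z_1|^2$ precisely when it is invariant under this rotation in the first factor; in particular, any function that does not depend on $z_1$ qualifies automatically.

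Inspecting the formulas given in Section \ref{subsec:defSystem}, none of $\ga_1, \ga_2, \ga_3, \ga_4$ involves $z_1$: the maps $\ga_2, \ga_3, \ga_4$ are polynomials in $|z_4|^2, |z_5|^2, |z_7|^2$, and $\ga_1 = \tfrac{1}{50}\Re(\overline{z_2 z_3 z_4}z_6 z_7 z_8)$ only features the variables $z_2, z_3, z_4, z_6, z_7, z_8$. Thus every lift of $\ga_j$ to $\mathbb{C}^8$ Poisson commutes with $\tfrac{1}{2}|z_1|^2$ with respect to $\om_{st}$.

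To transfer this identity to $M = \mcL^{-1}(0)/\T^6$, I would invoke the standard fact that if two smooth functions on $\mathbb{C}^8$ are $\T^6$-invariant, then so is their $\om_{st}$-Poisson bracket, and the restriction of that bracket to $\mcL^{-1}(0)$ descends to the $\om$-Poisson bracket of the induced functions on $M$. The lifts of $J$ and $\ga_2, \ga_3, \ga_4$ are manifestly $\T^6$-invariant since each component of $\mcL$ is linear in the $|z_k|^2$, and the functions themselves are polynomials in the $|z_k|^2$. For $\ga_1$, one reads off the weight of the monomial $\overline{z_2 z_3 z_4}z_6 z_7 z_8$ under each of the six circle factors generated by $\mcL$; for instance, the second component of $\mcL$ rotates $z_2, z_5, z_7$ simultaneously with the same weight, so it contributes net weight $-1 + 1 = 0$ to the monomial, and an analogous verification for the remaining five components confirms full $\T^6$-invariance. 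Hence $\{J, \ga_j\}_M = 0$ on $M$ for every $1 \leq j \leq 4$.

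Finally, by linearity of the Poisson bracket together with $\{J, H\}_M = 0$ (which is part of the statement that $(J,H)$ is a toric system, Theorem~\ref{th:toricOctagonSys}), one concludes
\[
\{J, H_t\}_M = (1-2t_1)\{J, H\}_M + \sum_{j=1}^4 t_j \{J, \ga_j\}_M = 0
\]
for every $t \in \R^4$. The only step that is not purely formal is the $\T^6$-invariance check for $\ga_1$, which boils down to a finite bookkeeping verification over the six generators of the torus action; the rest of the argument is direct.
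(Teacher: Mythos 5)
Your argument is correct and follows essentially the same route as the paper: both compute the bracket upstairs on $(\C^8,\om_{st})$, observe that none of the perturbation terms depends on $z_1$ so they commute with the lift $\tfrac12\abs{z_1}^2$ of $J$, and then descend to $M$ using $\T^6$-invariance, with the statement for $H_t$ following by linearity. The only divergence is that the paper disposes of $\ga_1$ by citing De Meulenaere and Hohloch (where $(J,\ga_1)$ is shown to be completely integrable), whereas you treat $\ga_1$ uniformly with $\ga_2,\ga_3,\ga_4$ and verify its $\T^6$-weights by hand; your weight bookkeeping for $\overline{z_2z_3z_4}z_6z_7z_8$ is correct, so this gives a slightly more self-contained proof at the cost of that finite check.
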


\begin{proof}
The pair $(J,\gamma_1)$ was already shown in De Meulenaere $\&$ Hohloch \cite[Proposition 4.6]{meulenaere2019family} to be a completely integrable system.

Since $\gamma_2$, $\ga_3$ and $\ga_4$ pass to the symplectic quotient it is enough to prove that $\gamma_2$, $\ga_3$ and $\ga_4$ commute with $J$ already on $(\mathbb{C}^8, \om_{st})$.
Recall that $\partial_{z_j} := \frac{1}{2}(\partial_x - i\partial_y)$ and $\partial_{{\zbar_j}} := \frac{1}{2}(\partial_x + i\partial_y)$ and calculate 
\begin{align*}
    \{\ga_2, J\}& = \omega_{st}(X^{\gamma_2},X^J) = \frac{i}{2}\sum_{n=1}^8 (dz_n\wedge d{\zbar_n})(X^{\gamma_2},-iz_1\partial_{z_1}+i{\zbar_1}\partial_{{\zbar_1}})\\
    &= \frac{i}{2}(dz_1\wedge d{\zbar_1})(X^{\gamma_2},-iz_1\partial_{z_1}+i{\zbar_1}\partial_{{\zbar_1}}) + \frac{i}{2}\sum_{n=2}^8 (dz_n\wedge d{\zbar_n})(X^{\gamma_2}, -iz_1\partial_{z_1}+i{\zbar_1}\partial_{{\zbar_1}})\\
    &= 0+0 = 0.
\end{align*}
The first zero is a consequence of $\gamma_2$ not depending on $z_1$. The second zero follows since the vector field $-iz_1\partial_{z_1}+i{\zbar_1}\partial_{{\zbar_1}}$ only contains $\partial_{z_1}$ and $\partial_{{\zbar_1}}$ components. 
The calculations for $\ga_3$ and $\ga_4$ are analogous.
\end{proof}

It remains to show 

\begin{proposition}\label{prop:gammadependend}
For all $t \in \R^4$, the Hamiltonian vector fields $X^J$ and $X^{H_t}$ are linearly independent almost everywhere.
\end{proposition}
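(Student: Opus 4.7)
The plan is to exploit real-analyticity to reduce the proposition to a non-vanishing test that, in turn, reduces to a polynomial degree argument. Since $\om$ is symplectic, $X^J(p)$ and $X^{H_t}(p)$ are linearly dependent at $p\in M$ if and only if $dJ(p)\wedge dH_t(p)=0$. In each chart $\phi_\nu$ from Section \ref{section:coordCharts} the functions $J$, $H$ and $\gamma_1,\dots,\gamma_4$, as well as $\om$, are real-analytic, so the set
$$Z_t:=\{\,p\in M \mid dJ(p)\wedge dH_t(p)=0\,\}$$
is real-analytic. Because $M$ is connected, $Z_t$ is either all of $M$ or a proper real-analytic subset, in which case it has empty interior and hence Lebesgue measure zero. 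The task thus reduces to showing, for every $t\in\R^4$, that $Z_t\neq M$.

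Arguing by contradiction, suppose $Z_t=M$ for some $t$. On the open dense set $\{dJ\neq 0\}$ the covector $dH_t$ is then a pointwise multiple of $dJ$, so $H_t$ is locally a function of $J$. I would test this in the chart $\phi_1$, where $z_1=x+iy$ and $z_2=u+iv$. Setting $r:=x^2+y^2$ and $s:=u^2+v^2$, one has $J=r/2$, so being a function of $J$ alone forces $\partial H_t/\partial s\equiv 0$ on the chart. Using the explicit formulas for $|z_k|^2$ in terms of $r$ and $s$ for $k\geq 3$, a direct computation yields $H=1-r/2+s/2$, three polynomials $\gamma_2,\gamma_3,\gamma_4\in\R[r,s]$ of total degree four, and
$$\gamma_1 \;=\; \frac{u}{50}\,\psi(r,s),\qquad \psi(r,s)\;:=\;\sqrt{(2-r+s)(6-2r+s)(8-s)(4+r-s)(2+2r-s)},$$
where $\psi$ is real-analytic and non-constant in $s$. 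Therefore
$$\frac{\partial H_t}{\partial s}\;=\;\frac{1-2t_1}{2}\;+\;\frac{t_1 u}{50}\,\frac{\partial\psi}{\partial s}\;+\;\sum_{j=2}^{4}t_j\,\frac{\partial\gamma_j}{\partial s}.$$

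The main obstacle is ruling out that this expression vanishes identically for some choice of $t\in\R^4$. I would split into two cases. If $t_1\neq 0$, the summand $t_1 u\,\psi_s/50$ is the only one depending on $u$, and since $\psi_s\not\equiv 0$ as a function of $(r,s)$ it cannot be cancelled by the other, $u$-independent, terms. If $t_1=0$, the remaining expression is the polynomial $\tfrac12 + \sum_{j=2}^{4} t_j\,\partial_s\gamma_j \in \R[r,s]$; its degree-three homogeneous part is a linear combination of $r^3$, $r^2 s$, $rs^2$ and $s^3$, and comparing these four coefficients yields a linear system in $(t_2,t_3,t_4)$ whose only solution is $t_2=t_3=t_4=0$, in which case the expression reduces to the nonzero constant $1/2$. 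In either case we obtain a contradiction, so $Z_t\neq M$ for every $t\in\R^4$, completing the proof.
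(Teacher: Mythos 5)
Your proof is correct, and it takes a genuinely different route from the paper's. The paper likewise reduces the statement to analyticity plus the fact that a not-identically-zero real-analytic function has a null zero set, but it tests linear dependence through the $2\times 2$ minors $f_{jk}$ of the coordinate components of $X^{J\circ\phi_1}$ and $X^{H_t\circ\phi_1}$ in the chart $(U_1,\phi_1)$, and rules out $f_{14}\equiv 0$ by evaluating $f_{14}$ at five explicit points with \emph{Mathematica} and checking that the resulting affine conditions on $(t_1,\dots,t_4)$ are incompatible. You instead characterise dependence by $dJ\wedge dH_t=0$, assume it holds on all of $M$, and refute it symbolically: in the chart $\phi_1$ the identity $\partial_s H_t\equiv 0$ is affine in $u$, so for $t_1\neq 0$ the coefficient $\tfrac{t_1}{50}\partial_s\psi$ would have to vanish although $\partial_s\psi\not\equiv 0$, while for $t_1=0$ the cubic homogeneous part of $\tfrac12+\sum_{j=2}^4 t_j\,\partial_s\gamma_j$ forces $t_2=t_3=t_4=0$ (I verified the linear system coming from the coefficients of $s^3$, $rs^2$, $r^2s$, $r^3$: it is indeed only solved by zero), leaving the nonzero constant $\tfrac12$. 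This buys a computer-free argument, uniform in $t$, in place of the paper's numerical evaluations. Two small points to tighten: (i) $u$ and $s=u^2+v^2$ are independent coordinates only where $v\neq 0$ (and $dJ\neq0$ needs $r\neq0$), so the identity $\partial_s H_t\equiv 0$, with $\partial_s$ taken at fixed $(r,u)$, should be asserted on the dense open set $\{v\neq 0,\ r\neq 0\}$ and then extended by analyticity; (ii) the step ``proper real-analytic subset, hence empty interior, hence measure zero'' deserves a word: properness gives empty interior by the identity theorem on connected charts, and measure zero then follows because $Z_t$ is locally contained in the zero set of a not-identically-vanishing analytic function --- the same standard fact the paper invokes (empty interior alone does not imply measure zero).
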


\begin{proof}
W.l.o.g.\ we will proof it explicitly only for the chart $(U_1 , \phi_1)$. For $1 \leq k \leq 4$, denote by $X^{J \circ \phi_1}_k$ and $X_k^{H_t \circ \phi_1}$ the $k$th coordinate component of the Hamiltonian vector fields $X^{H_t \circ \phi_1}$ and $X^{J \circ \phi_1}$ and consider, for $1 \leq j < k \leq 4$, the functions 
\begin{equation*}
    f_{jk}: U_1  \rightarrow \mathbb{R}, 
    \qquad 
    x \mapsto
    \det
    \begin{pmatrix}
    X^{J \circ \phi_1}_j(x) & X^{H_t \circ \phi_1}_j(x) \\
     X^{J \circ \phi_1}_k(x)  & X^{H_t \circ \phi_1}_k(x)
    \end{pmatrix}.  
\end{equation*}
Since $X^{J \circ \phi_1}$ and $X^{H_t \circ \phi_1}$ are linearly dependent on a subset of $f^{-1}_{jk}(0)$ it is enough to show that there exists $j,k$ such that $f_{jk}^{-1}(0)$ has measure zero w.r.t.\ the Lebesgue measure. $f_{12}$ and $f_{34}$ vanish, but the others do not. Thus, apart from the combinations $(j,k) \in \{ (1, 2), (3, 4)\}$ we can consider any of the functions $f_{jk}$. Let us start with $f_{14}$. As it turns out, we then need not consider the others. 

More precisely, we will now show that $ f^{-1}_{14}(0) $ is a zero set w.r.t.\ the Lebesgue measure. 
Recall that the zero set of a nonzero analytic function has Lebesgue measure zero. The function $f_{14}$ is analytic on $U_1$ since it consists of polynomials and square roots in $x, y, u, v$ where the square roots are evaluated away from their poles. It remains to show that no choice of the parameters $t_1, \dots, t_4$ can make $f_{14}$ vanish completely. Arguing by contradiction, assume that there is $(t_1,...,t_4) \in \mathbb{R}^4$ such that $f_{14} \equiv 0$. By plugging the point $\left(0, \frac{1}{2}, 0, 0\right) $ into $f_{14}$ we obtain
$$
0 =  f_{14}\left(0, \frac{1}{2}, 0, 0\right)  =  - \frac{t_1}{40} \sqrt{\frac{1309}{10}} .
$$
Then plugging the (admissible) points $$
(x, y, u, v) \in \left\{   \left(\frac{1}{2}, \frac{1}{2}, \frac{1}{2}, 0 \right), \left(0, \frac{1}{2}, \frac{1}{3}, 0 \right), \left(0, \frac{1}{2}, \frac{1}{4}, 0\right), \left(0, \frac{1}{2}, \frac{1}{5}, 0 \right) \right\}
$$
into $f_{14}$ we get via a short {\em Mathematica} calculation
\begin{align*}
 f_{14} \left(\frac{1}{2}, \frac{1}{2}, \frac{1}{2}, 0 \right) & =  \frac{-400 - 42 t_2 + 714 t_3 + 
  17 t_4}{1600} , \\
 f_{14}\left(0, \frac{1}{2}, \frac{1}{3}, 0 \right) & = \frac{-1166400 - 
  32724 t_2 + 3190388 t_3 + 12069 t_4}{6998400} , \\
 f_{14}\left(0, \frac{1}{2}, \frac{1}{4}, 0\right) & =  \frac{-25600 - 712 t_2 + 
  65593 t_3 + 268 t_4}{204800} ,\\
 f_{14}\left(0, \frac{1}{2}, \frac{1}{5}, 0 \right)& = \frac{-692500 t_2 + 
  62040244 t_3 + 625 (-40000 + 421 t_4)}{250000000} .
\end{align*}
We verify with {\em Mathematica} that there are no values of $t_1, \dots, t_4$ that satisfy $f_{14} \equiv 0$ under the above conditions. Thus $f_{14} $ cannot be identically zero. 
Therefore $X^{H_t}$ and $X^J$ are linearly independent almost everywhere on $M$. 
\end{proof}


\subsection{Some technical properties}

This subsection consists of two technical results that we need later. First consider

\begin{lemma}
\label{lem:omega}
If $\gamma_1(\phi_1(x+iy,u+iv)) = 0$ for $ (x+iy,u+iv) \in U_1$ then $u = 0$.
\end{lemma}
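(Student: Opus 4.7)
The plan is to simply evaluate $\gamma_1$ explicitly in the chart $\phi_1$ and to observe that all terms except one are automatically real and nonzero on $U_1$.

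First I would recall that in the chart $\phi_1$ the coordinates $z_3, z_4, z_5, z_6, z_7, z_8$ are by construction real and strictly positive, while $z_1 = x+iy$ and $z_2 = u+iv$ are the only coordinates allowed to be complex. Since complex conjugation fixes real numbers, the triple $\overline{z_2 z_3 z_4}$ becomes $(u-iv)\, z_3 z_4$ with $z_3, z_4 > 0$, and the product $z_6 z_7 z_8$ is a positive real number. Substituting into the definition of $\gamma_1$ gives
\begin{align*}
  \gamma_1(\phi_1(x+iy, u+iv))
  &= \tfrac{1}{50}\, \Re\!\bigl((u-iv)\, z_3 z_4 z_6 z_7 z_8\bigr) \\
  &= \tfrac{1}{50}\, u \cdot z_3 z_4 z_6 z_7 z_8,
\end{align*}
where $z_3, z_4, z_6, z_7, z_8$ are the explicit square root expressions in $x, y, u, v$ recorded in Section~\ref{section:coordCharts}.

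Next I would invoke the defining property of $U_1$, namely that on $U_1$ the coordinates $z_3, \ldots, z_8$ are nonzero; since they are given as positive square roots of strictly positive expressions on $U_1$, the product $z_3 z_4 z_6 z_7 z_8$ is strictly positive throughout $U_1$. Consequently $\gamma_1(\phi_1(x+iy, u+iv)) = 0$ forces $u = 0$, as claimed.

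There is no real obstacle here: the only thing to be careful about is the bookkeeping of which coordinates are real in the chart $\phi_1$, which is exactly the point of the coordinate representation set up in Section~\ref{section:coordCharts}. Once that is invoked, $\gamma_1$ collapses to $\tfrac{1}{50}\, u$ times a strictly positive analytic factor, and the conclusion is immediate.
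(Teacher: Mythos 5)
Your proof is correct and follows essentially the same route as the paper: in the chart $\phi_1$ the representatives $z_3,\dots,z_8$ are strictly positive reals, so $\gamma_1\circ\phi_1$ reduces to $\tfrac{1}{50}\,u\,z_3z_4z_6z_7z_8$ with a strictly positive cofactor, forcing $u=0$. No gaps; this matches the paper's argument.
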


\begin{proof}
First, recall that, on $U_1$,
$$ \ga_1(z_1, \dots, z_8) = \frac{1}{50} \ \Re(\overline{z_2z_3z_4}z_6z_7z_8) \quad \mbox{and} \quad (z_1, \dots, z_8) = \phi_1(x+iy,u+iv). $$ 
Second, recall that our preferred representatives of $z_3,...,z_8$ are in fact real on $U_1$. This implies
$$ 
\ga_1(z_1, \dots, z_8) = \frac{1}{50} \ \Re(\overline{z_2z_3z_4}z_6z_7z_8) = \frac{1}{50} z_3z_4 z_6z_7z_8\Re(\overline{z_2}).
$$
For a product to vanish, one of the factors needs to be zero. Thus if $\gamma_1(\phi_1(x+iy,u+iv)) = 0$, one of the $\Re(\overline{z_2})$, $z_3$, $z_4$, $z_6$, $z_7$, $z_8$ must vanish. By definition of $U_1$, de factors $z_3$, $z_4$, $z_6$, $z_7$, $z_8$ are strictly greater than zero and only $z_1$ and/or $z_2$ may vanish. Thus $u=\Re(\overline{z_2}) $ must vanish.
\end{proof}

When constructing the system $(J,H_{t})$, we took $J$ unaltered from the toric system $(J,H)$. Thus the system $(J, H_t)$ always contains a global $\mathbb{S}^1$-action. 
We now show that all regular points of $J$ have the same period:

\begin{theorem}
\label{the:equalperiod}
For all $p \in M$ with $dJ(p) \neq 0 $ the periods of regular points of $J$ coincide, i.e., we have $2 \pi = P^J(p) := \inf\left\{t>0 \ \left| \ \Phi^J_t(p)=p\right. \right\}$.  
\end{theorem}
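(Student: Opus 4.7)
The plan is to lift the flow of $J$ to the cover $\mcL^{-1}(0) \subset \C^8$ and to analyse when the lifted trajectory re-enters the $\T^6$-orbit of its starting point. With respect to $\om_{st}$, the Hamiltonian flow of $\tilde J := \frac{1}{2}|z_1|^2$ on $\C^8$ is the pure rotation $(z_1,\dots,z_8) \mapsto (e^{-it}z_1, z_2,\dots,z_8)$, which is $2\pi$-periodic on $\C^8$ and preserves $\mcL^{-1}(0)$ because $\tilde J$ Poisson commutes with every component of $\mcL$. Consequently $P^J([z])$ divides $2\pi$ for every $[z] \in M$, and $P^J([z]) = 2\pi$ is equivalent to triviality of the stabiliser of $[z]$ under the induced $\mbS^1$-action.

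Now $[\Phi_t^J(z)] = [z]$ means that there exists $\theta \in \T^6$ with $(e^{-it}z_1, z_2,\dots,z_8) = \theta\cdot(z_1,\dots,z_8)$. Reading off the linear parts of $\mcL_1,\dots,\mcL_6$ gives a weight matrix $E \in \Z^{6\times 8}$ for which the action is $\theta\cdot z_j = e^{i (E^\top\theta)_j} z_j$. Since $dJ(p) \neq 0$ is equivalent to $z_1 \neq 0$, we obtain the constraint $-t \equiv \theta_1 \pmod{2\pi}$ together with one linear equation $(E^\top\theta)_j \equiv 0 \pmod{2\pi}$ for each $j \geq 2$ with $z_j \neq 0$. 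It therefore suffices to show that these equations force $\theta_1 \equiv 0 \pmod{2\pi}$ for every vanishing pattern of $(z_2,\dots,z_8)$ that can occur at a regular point of $J$.

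The chart decomposition from Section~\ref{section:coordCharts} organises this case analysis: for $[z] \in U_\nu$ only $z_\nu$ and $z_{\nu+1}$ may vanish, so the vanishing set within $\{z_1,\dots,z_8\}$ is empty, a single index, or a consecutive pair $\{\nu,\nu+1\}$ corresponding to a vertex of $\De$. Every vertex of $\De$ is a fixed point of the toric $\T^2$-action and therefore satisfies $dJ = 0$, so vertex preimages are excluded by hypothesis. Moreover $z_5 = 0$ forces $|z_1|^2 = 6$ via $\mcL_1$, hence $J = 3$ and $dJ = 0$, so the only remaining configurations are the one where all of $z_2,\dots,z_8$ are nonzero and the six where exactly one of $z_2, z_3, z_4, z_6, z_7, z_8$ vanishes. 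A direct substitution of each such pattern into the linear system built from the columns of $E$ forces $\theta = 0 \pmod{2\pi}$ in every case, hence $t \equiv 0 \pmod{2\pi}$, so $P^J(p) = 2\pi$.

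The main obstacle is the bookkeeping of the seven case checks, which is mechanical but unavoidable once $E$ is written down. A more conceptual shortcut would invoke Delzant's classification: the primitive inward normal of every non-vertical edge of the octagon has second coordinate $\pm 1$, so the isotropy $\mbS^1$ attached to that edge meets the $J$-subgroup $\{(e^{is},1) : s \in \R\} \subset \T^2$ only in the identity, which is exactly what the weight-matrix calculation verifies by hand.
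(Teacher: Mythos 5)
Your proof is correct, but it takes a genuinely different route from the paper's. The paper stays entirely inside one coordinate chart: since by Lemma \ref{lem:sympform} the symplectic form is standard in $(U_1,\phi_1)$, passing to the polar coordinates \eqref{eq:polarcoordinates} gives $X^{J\circ\phi_1\circ\EuScript{P}_1}=\partial_{\theta_1}$, so the flow is $\Phi^{J\circ\phi_1}_t(z_1,z_2)=(e^{it}z_1,z_2)$ and has period exactly $2\pi$ wherever the first chart coordinate is nonzero, the other charts being treated as analogous; this is shorter and also produces the explicit flow formula the paper reuses later. You instead lift to $(\C^8,\om_{st})$, where the flow of $\tfrac12|z_1|^2$ is a $2\pi$-periodic rotation of $z_1$ preserving $\mcL^{-1}(0)$, and reduce the claim to triviality of the $\mbS^1$-stabiliser of $[z]$, checked through the $\T^6$-weights and the admissible vanishing patterns of the coordinates; this avoids the ``analogous charts'' step and makes the link with freeness of the $\mbS^1$-action (the same point exploited in Lemma \ref{lem:dJnotZero}) explicit. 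I verified your asserted case check: with the weights read off from $\mcL$ (so $z_5$ carries $\theta_1+\theta_2-\theta_4+\theta_5+\theta_6$, $z_6$ carries $-\theta_5$, $z_7$ carries $\theta_2+\theta_3+\theta_4+\theta_5-\theta_6$, $z_8$ carries $\theta_6$, up to sign), the constraints coming from the nonvanishing coordinates force $\theta_1\equiv 0 \pmod{2\pi}$ in the case of no vanishing coordinate and in each of the six cases where exactly one of $z_2,z_3,z_4,z_6,z_7,z_8$ vanishes, so $t\equiv 0$ and the argument closes; in a final write-up you should either display the weight matrix and this short case list or make your Delzant remark precise (the $\T^2$-stabiliser over the relative interior of an edge is the circle generated by the primitive inward normal, and for every non-vertical edge of $\De$ that normal has second coordinate $\pm1$, so it meets the $J$-circle trivially), rather than leaving the central verification asserted. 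One small slip: ``$dJ(p)\neq 0$ is equivalent to $z_1\neq 0$'' should be an implication only, since $z_5=0$ also forces $dJ=0$; you use only the correct direction and treat $z_5=0$ separately, so this is cosmetic.
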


\begin{proof}
Let us assume for simplicity that $p$ lies on the coordinate chart $(U_1, \phi_1)$, all other cases are analogous. 
Whenever 
$$d(J \circ \phi_1)(x+iy,u+iv) = xdx+ydy \neq 0, $$ 
we have $\abs{x+iy}^2 =x^2+y^2 \neq 0$ and thus $z_1= x +iy \neq 0$.
Denote the domain in Figure \ref{fig:imp1domain} (that is associated with $U_1$) by $D_1$ and consider polar coordinates given by 
\begin{equation}
 \label{eq:polarcoordinates}
 \begin{aligned}
  &  \EuScript{P}_1 : \  D_1 \ \times  \ ]-\pi,\pi[ \ \times\ ]-\pi,\pi[ \ \to U_1, \\
  & \EuScript{P}_1(r_1,r_2,\theta_1,\theta_2) := \bigl(r_1 cos(\theta_1)+i r_1 sin(\theta_1),\ r_2 cos(\theta_2)+i r_2 sin(\theta_2)\bigr).
  \end{aligned}
\end{equation}
In these coordinates, the symplectic form becomes
$$r_1dr_1 \wedge d\theta_1+r_2dr_2 \wedge d\theta_2.$$ 
Evaluating the equation
\begin{align*}
     d(J \circ \phi_1 \circ \EuScript{P}_1) (\ \cdot\ ) = (r_1dr_1 \wedge d\theta_1+r_2dr_2 \wedge d\theta_2)(X^{J\circ \phi_1 \circ \EuScript{P}_1},\cdot\ )
\end{align*}
on $\partial_{r_1}$, $ \partial_{\theta_1}$, $\partial_{r_2}$ and $\partial_{\theta_2}$ yields
 \begin{align*}
     &d (J\circ \phi_1 \circ \EuScript{P}_1)(\partial_{r_1})  =r_1d\theta_1(X^{J\circ \phi_1 \circ \EuScript{P}_1 }) = r_1, && d (J\circ \phi_1 \circ \EuScript{P}_1)(\partial_{r_2})  = r_2d\theta_2(X^{J\circ \phi_1 \circ \EuScript{P}_1})  = 0,\\
      & d (J\circ \phi_1 \circ \EuScript{P}_1)(\partial_{\theta_1})  = -r_1dr_1(X^{J\circ \phi_1 \circ \EuScript{P}_1 } ) = 0, && d (J\circ \phi_1 \circ \EuScript{P}_1)(\partial_{\theta_2})  =-r_2dr_2(X^{J\circ \phi_1 \circ \EuScript{P}_1 }) =  0.
 \end{align*}
which implies $X^ {J\circ \phi_1 \circ \EuScript{P}_1 } = \partial_{\theta_1}$. This allows us to calculate the flow map of $J$ in polar coordinates explicitly as
$$
\Phi^{J\circ \phi_1 \circ \EuScript{P}_1 }_t(r_1,r_2,\theta_1,\theta_2) = (r_1,r_2,\theta_1 +t,\theta_2)
$$
or, equivalently,
$$
\Phi^{J\circ \phi_1  }_t(z_1, z_2) = ( e^{it} z_1, z_2).
$$
Therefore, if $dJ(p) \neq 0$ for some $p \in U_1$, we find for the period $P^J(p) = 2 \pi $.
\end{proof}


\section{Singular points and the reduced manifold}
\label{sec:redmanicritpoints}

In this section, we will calculate the singular points on the proper $\mathbb{S}^1$-system $\left(J,H_t\right)$ for certain $t \in \mathbb{R}^4$. In Section \ref{sec:singbif}, we will observe how these points change when $t\in \R^4$ varies.


\subsection{Criteria for rank zero singular points}

Recall that the toric octagon system $(J, H)$ has eight singular points of rank zero, namely the preimages of the vertices of the octagon. Moreover, each semitoric system of the semitoric transition family in De Meulenaere $\&$ Hohloch \cite{meulenaere2019family} also has exactly eight singular points of rank zero, of which four undergo bifurcations from elliptic-elliptic to focus-focus and back when the perturbation parameter passes from zero to one.

The following statements will show that the rank zero singular points can only appear in the preimage of the blue points and lines in Figure \ref{fig:sides}.

\begin{figure}[ht]
    \centering
    \includegraphics{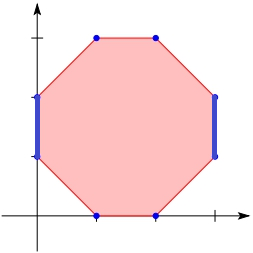}
    \caption{Rank zero singular points can only appear in the preimage of the blue points and lines which are given by $(1,0)$, $(1,3)$, $(2,0)$, $(2,3)$ and $\{0\} \times [1,2]$ and  $\{3\} \times [1, 2]$.}
    \label{fig:sides}
\end{figure}

\begin{theorem}
\label{the:invariablesingpoint}
For all $t \in \R^4$, the points 
\begin{equation*}
\left\{ \
\begin{aligned}
 \quad \phi_2(0,0) & = \left[\sqrt{2}, \ 0, \ 0,\sqrt{2}, \ 2, \ 2\sqrt{2},\sqrt{6},\sqrt{6}\right], \\
 \quad \phi_3(0,0) & = \left[2,\sqrt{2}, \ 0, \ 0,\sqrt{2},\sqrt{6},\sqrt{6}, \ 2\sqrt{2}\right], \\
 \quad \phi_6(0,0) & = \left[2, \ 2\sqrt{2},\sqrt{6},\sqrt{6},\sqrt{2}, \ 0, \ 0,\sqrt{2} \right], \\
 \quad \phi_7(0,0) & = \left[\sqrt{2},\sqrt{6},\sqrt{6}, \ 2\sqrt{2}, \ 2,\sqrt{2}, \ 0, \ 0 \right]
\end{aligned} 
\right.
\end{equation*}
are singular points of rank zero of $(J,H_t)$ with values 
$$
J\left (\left [\sqrt{2}, \ 0, \ 0,\sqrt{2}, \ 2, \ 2\sqrt{2},\sqrt{6},\sqrt{6} \right ] \right)=1= J \left(\left [\sqrt{2},\sqrt{6},\sqrt{6}, \ 2\sqrt{2}, \ 2,\sqrt{2}, \ 0, \ 0 \right] \right)
$$
and 
$$
J \left(\left [ 2, \ 2\sqrt{2},\sqrt{6},\sqrt{6},\sqrt{2}, \ 0, \ 0,\sqrt{2} \right ] \right)= 2 = J \left ( \left [2,\sqrt{2}, \ 0, \ 0,\sqrt{2},\sqrt{6},\sqrt{6}, \ 2\sqrt{2} \right ] \right ).
$$
They are referred to as the {\em invariant singular points} since their coordinates do not depend on the parameter $t$.
\end{theorem}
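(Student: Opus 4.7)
The plan is to work chart-by-chart: for each $\nu \in \{2,3,6,7\}$ I will verify that the pullbacks $J\circ\phi_\nu$ and $H_t\circ\phi_\nu$ have vanishing differential at the origin $(x,y,u,v)=(0,0,0,0)$. The chart $\phi_\nu$ is constructed exactly as $\phi_1$ in Section \ref{section:coordCharts}: its coordinates parametrise the two complex variables $z_\nu, z_{\nu+1}$ allowed to vanish in $U_\nu$, while the remaining six $|z_k|^2$ are expressed as smooth functions of $|z_\nu|^2 = x^2+y^2$ and $|z_{\nu+1}|^2 = u^2+v^2$ by solving the six manifold equations. In chart $\phi_2$ this gives in particular $|z_1|^2 = 2 + |z_2|^2 - |z_3|^2$, which both confirms the claimed coordinate representative at $(0,0)$ and yields $J(\phi_2(0,0)) = 1$; the three remaining charts are handled identically.

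The structural observation driving everything is that any smooth function of the form $f(|z_\nu|^2,|z_{\nu+1}|^2)$ viewed through $\phi_\nu$ becomes a smooth function of $x^2+y^2$ and $u^2+v^2$, and so has vanishing first-order partials at the origin (since $\partial_x(x^2+y^2)=2x$, etc.). Applied in $\phi_2$ this immediately gives $d(J\circ\phi_2)(0)=0$, $d(H\circ\phi_2)(0)=0$ (since there $H = (u^2+v^2)/2$), and $d(\gamma_j\circ\phi_2)(0)=0$ for $j\in\{2,3,4\}$, because each such $\gamma_j$ is a polynomial in the moduli $|z_k|^2$ and hence a smooth function of $|z_2|^2,|z_3|^2$ after using the manifold equations.

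The only delicate term is $\gamma_1 = \frac{1}{50}\Re(\overline{z_2 z_3 z_4}\, z_6 z_7 z_8)$, which is not modulus-only. However, in $\phi_2$ the factors $z_4,z_6,z_7,z_8$ are positive real functions of $|z_2|^2, |z_3|^2$, so
\begin{align*}
\gamma_1 \circ \phi_2(x,y,u,v) = \frac{1}{50}\, z_4 z_6 z_7 z_8 \cdot \Re\bigl((x-iy)(u-iv)\bigr) = \frac{1}{50}\, z_4 z_6 z_7 z_8 \cdot (xu - yv).
\end{align*}
The quadratic factor $xu-yv$ has all first partials zero at the origin, and the prefactor is a smooth function of $(x^2+y^2,u^2+v^2)$ whose partials also vanish there; the product rule yields $d(\gamma_1 \circ \phi_2)(0)=0$. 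Assembling the pieces, $d(H_t\circ\phi_2)(0) = (1-2t_1)\,dH + \sum_j t_j\, d\gamma_j = 0$ for every $t\in\R^4$, so $\phi_2(0,0)$ is rank zero for $(J,H_t)$.

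The same argument runs in the charts $\phi_3,\phi_6,\phi_7$: the two chart variables $z_\nu, z_{\nu+1}$ always appear as factors in $\gamma_1$ whose complex product has real part quadratic in $(x,y,u,v)$ with no constant or linear term, while the other four $z_k$ entering $\gamma_1$ remain positive real functions of $|z_\nu|^2,|z_{\nu+1}|^2$. The coordinate representatives listed in the theorem follow from solving the manifold equations at $|z_\nu|^2=|z_{\nu+1}|^2=0$ in each chart, and reading off $|z_1|^2$ gives $J(\phi_3(0,0))=J(\phi_6(0,0))=2$ and $J(\phi_7(0,0))=1$. I do not foresee any real obstacle; the proof is essentially a bookkeeping exercise across the four charts, with the only point to watch being that the linear solve of the manifold equations must be redone cleanly each time the pair of free variables changes.
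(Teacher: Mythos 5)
Your proposal is correct and follows essentially the same route as the paper's proof: working in the charts $\phi_\nu$ for $\nu\in\{2,3,6,7\}$, observing that $J$, $H$, $\gamma_2$, $\gamma_3$, $\gamma_4$ pull back to functions of $x^2+y^2$ and $u^2+v^2$ (hence have vanishing differential at the origin), and handling $\gamma_1$ by factoring it as a positive real prefactor times $xu-yv$, whose differential vanishes at the origin. The paper writes the $\gamma_1$ computation out as $ux\,f - vy\,g$ with explicit partial derivatives rather than invoking the product rule in one line, but the argument is the same, and your determination of the representatives and $J$-values from the manifold equations matches the paper.
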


\begin{proof}
We will exemplarily prove it for $(U_2,\phi_2)$. Recall $ J(z_1,...,z_8) = \frac{|z_1|^2}{2}$ and
\begin{align*}
       \phi_2(x+ iy,u+ iv) &= 
    \begin{pmatrix}
    \sqrt{2 + x^2 + y^2 - u^2 - v^2}\\
    x+iy\\
    u+iv\\
    \sqrt{2 - x^2 - y^2 + 2 u^2 + 2 v^2}\\
    \sqrt{4 - x^2 - y^2 + u^2 + v^2}\\
    \sqrt{8 - x^2 - y^2}\\
    \sqrt{6 - u^2 - v^2}\\
    \sqrt{6 + x^2 + y^2 - 2 u^2 - 2 v^2}
    \end{pmatrix}.
\end{align*}
For $(x+iy,u+iv) \in \C^2$, calculate 
\begin{align*}
    J(\phi_2(x+iy,u+iv)) & = \frac{1}{2} (2+x^2+y^2-u^2-v^2), \\
    d(J\circ \phi_2)(x+iy,u+iv) & = (x, \ y, \ -u, \ -v).
\end{align*}
Thus $dJ =0$ is equivalent with $x=y=u=v=0$. Thus the origin $(0, 0) \in \C^2$ is the only singular point of the chart $\phi_2$.
The argument for the charts $(U_3,\phi_3)$, $(U_6,\phi_6)$ and $(U_7,\phi_7)$ is similar.
Therefore the rank of $\phi_\ell(0,0)$ for $\ell \in \{2, 3, 6, 7\}$ can be maximally one.
Now we show that it is actually rank zero, i.e., we also have $d(H_t \circ \phi_\ell)(0,0) =0$ for $\ell \in \{2, 3, 6, 7\}$. 
We give the argument in detail only for $\ell = 2$ since the other cases are similar. From Section \ref{sec:defSystem}, recall
\begin{align*}
    H_t:= H_{(t_1,t_2,t_3,t_4)} = (1-2 t_1)H + \sum_{n = 1}^4 t_n \gamma_n 
\end{align*}
and the definitions of $\ga_1$, $\ga_2$, $\ga_3$ and $\ga_4$. We calculate
\begin{align*}
    (\gamma_1 \circ \phi_2)(x+iy,u+iv) &= \frac{1}{50}\Re(\overline{z_2z_3})z_4z_6z_7z_8 = \frac{1}{50}(ux-vy)z_4z_6z_7z_8\\
    &= ux \ f(x+iy,u+iv) -vy \ g(x+iy,u+iv)
\end{align*}
where $f, g : U_2 \to \R$ are suitable smooth functions.
Now we compute the partial derivatives for $x, y, u, v$ of $(\gamma_1 \circ \phi_2)$ in $(x+iy,u+iv)$:
\begin{align*}
    \partial_x (\gamma_1 \circ \phi_2) &= u f + ux \ \partial_xf - vy \ \partial_xg,
    &&& 
    \partial_u (\gamma_1 \circ \phi_2) & =  x f + ux \ \partial_uf - vy \ \partial_ug, \\
    \partial_y (\gamma_1 \circ \phi_2) & = ux \ \partial_uf - v g - vy \ \partial_yg ,  
    &&&
    \partial_v (\gamma_1 \circ \phi_2) &=  ux \ \partial_vf - y g - vy \ \partial_vg.
\end{align*}
Evaluating at $(x, y, u, v) = (0,0,0,0)= (0+i0, 0+i0)$, we find for all $k \in \{x, y, u, v\}$
\begin{align*}
    \partial_k (\gamma_1 \circ \phi_2)(0+i 0,0+ i0) &= 0.
\end{align*}
Now calculate
\begin{align*}
 (\gamma_2 \circ \phi_2)(x+iy,u+iv) & = \frac{1}{50}(2-x^2-y^2+2u^2+2v^2)^2(4-x^2-y^2+u^2+v^2)^2, \\
 (\gamma_3 \circ \phi_2)(x+iy,u+iv) & =  \frac{1}{50}(2-x^2-y^2+2u^2+2v^2)^2(6-u^2-v^2)^2, \\
 (\gamma_4 \circ \phi_2)(x+iy,u+iv) & =  \frac{1}{100}(4-x^2-y^2+u^2+v^2)^2(6-u^2-v^2)^2.
\end{align*}
Note that these formulas depend only on $x^2+y^2$ and $u^2+v^2$. Therefore calculating the derivatives for $k \in \{x, y, u, v\}$ and evaluating them at $(x , y, u, v) = (0,0,0,0)= (0+i0, 0+i0)$ yields
\begin{align*}
     \partial_k (\gamma_k \circ \phi_2)(0+0i,0+0i) &= 0
\end{align*}     
for all $k \in \{x, y, u, v\}$. 
It remains to consider
$$
(H \circ \phi_2) (x+iy, u+iv) = \frac{u^2+v^2}{2}.
$$
We compute
\begin{align*}
 \partial_x (H \circ \phi_2)  = 0, \quad 
 \partial_y (H \circ \phi_2)   = 0, \quad 
 \partial_u (H \circ \phi_2)   = u, \quad 
 \partial_v (H \circ \phi_2)   = v.
\end{align*}
Evaluating at $(x, y, u, v) = (0,0,0,0)= (0+i0, 0+i0)$, we find for all $k \in \{x, y, u, v\}$
\begin{align*}
    \partial_k (H \circ \phi_2)(0+i 0,0+i 0) &= 0.
\end{align*}
Altogether we therefore obtain for all $k \in \{x, y, u, v\}$
\begin{align*}
    \partial_k  (H_{t} \circ \phi_2) (0+i 0,0+i 0)&  = 0.
\end{align*}
\end{proof}

Note that Theorem \ref{the:invariablesingpoint} shows a property of $(J, H_t)$ similar to the one in Le Floch $\&$ Palmer \cite[Lemma 3.2]{floch2019semitoric}.

Now we are interested in the whereabouts of the singular points of $(J, H_t)$ that are not invariant.

\begin{theorem}\label{the:zerocor}
If a rank zero singular point $[z_1, \dots, z_8]$ of $(J, H_t)$ is not an invariant singular point then either its first or its fifth coordinate is zero. If the first coordinate is zero then its value under $J$ is zero and if the fifth coordinate is zero then its value under $J$ is three.
\end{theorem}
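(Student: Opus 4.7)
The plan is to exploit the necessary condition $dJ(p)=0$ for $p$ to be a rank zero singular point of $(J,H_t)$, and to classify the critical set of $J$ chart by chart. Since $M=\bigcup_{\nu=1}^{8}U_\nu$ by Section \ref{section:coordCharts}, any such point lies in some $(U_\nu,\phi_\nu)$, so it suffices to compute the zero set of $d(J\circ\phi_\nu)$ on each chart. The stronger condition $dH_t(p)=0$ is actually not needed for the conclusion of this theorem: the mere vanishing of $dJ$ already forces $p$ into one of the three alternatives claimed.

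The key observation is that, on each chart, the six manifold equations recalled in Section \ref{section:coordCharts} allow one to eliminate $|z_3|^2,\dots,|z_8|^2$ in favour of the two free chart variables, so that $|z_1|^2$, and hence $J\circ\phi_\nu=\tfrac12|z_1|^2$, becomes an affine function of $|z_\nu|^2$ and $|z_{\nu+1}|^2$. Consequently $d(J\circ\phi_\nu)$ vanishes at a point exactly when every chart coordinate whose squared modulus enters with a nonzero coefficient is itself zero. So the critical locus in each chart is determined purely by which coefficients are nonzero in that affine expression.

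Carrying out this reduction splits the analysis into three cases. On $U_1$ and $U_8$ the coordinate $z_1$ is itself a chart variable, so $J\circ\phi_\nu=\tfrac12|z_1|^2$ and $d(J\circ\phi_\nu)=0$ iff $z_1=0$, giving $J(p)=0$. On $U_4$ and $U_5$ the coordinate $z_5$ is a chart variable and the manifold equation $|z_1|^2+|z_5|^2=6$ yields $J\circ\phi_\nu=\tfrac12(6-|z_5|^2)$, so $d(J\circ\phi_\nu)=0$ iff $z_5=0$, giving $|z_1|^2=6$ and therefore $J(p)=3$. On the remaining charts $U_2,U_3,U_6,U_7$, neither $z_1$ nor $z_5$ is a free chart variable, and solving the manifold equations shows $|z_1|^2=c+\alpha|z_\nu|^2+\beta|z_{\nu+1}|^2$ with both $\alpha,\beta\ne 0$; hence $d(J\circ\phi_\nu)=0$ forces $z_\nu=z_{\nu+1}=0$, which recovers exactly the four invariant points of Theorem \ref{the:invariablesingpoint}.

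The only nontrivial step is the bookkeeping in the third case: one has to invert the linear system of manifold equations in each of the four charts and confirm that both resulting coefficients are nonzero, which is routine and parallels the explicit chart formula already spelled out for $\phi_1$ in Section \ref{section:coordCharts}. Points lying in chart overlaps cause no difficulty, since the conclusion ($z_1=0$ with $J=0$, or $z_5=0$ with $J=3$, or invariance) is intrinsic and chart-independent.
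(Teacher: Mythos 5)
Your proposal is correct and follows essentially the same route as the paper: a chart-by-chart computation of where $d(J\circ\phi_\nu)$ vanishes, using the manifold equations to express $|z_1|^2$ affinely in the chart variables, which forces $z_1=0$ (so $J=0$) on $U_1,U_8$, $z_5=0$ (so $|z_1|^2=6$ and $J=3$) on $U_4,U_5$, and recovers only the invariant points on $U_2,U_3,U_6,U_7$. The paper simply delegates the last case to the computation already carried out in the proof of Theorem \ref{the:invariablesingpoint} rather than re-deriving the nonvanishing coefficients, and like you it only uses the necessary condition $dJ=0$.
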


\begin{proof}
Given a singular point of rank zero, the proof of Theorem \ref{the:invariablesingpoint} showed that the only points in $U_2$, $U_3$, $U_6$ and $U_7$ with $dJ=0$ are the invariants singular points listed in the statement of Theorem \ref{the:invariablesingpoint}. Thus any further singular points of rank zero must lie in $(U_1, \phi_1)$, $(U_4, \phi_4)$, $(U_5, \phi_5)$ and/or $(U_8, \phi_8)$.
Recall $ J(z_1,...,z_8) = \frac{|z_1|^2}{2}$ and consider
\begin{equation*}
\phi_1(x+iy,u+iv) = 
\left(
\begin{aligned}
    &  \ x+iy\\
    &  \ u+iv\\
    & \sqrt{2-x^2-y^2+u^2+v^2}\\
     & \sqrt{6-2x^2-2y^2+u^2+v^2}\\
    & \sqrt{6-x^2-y^2}\\
    & \sqrt{8-u^2-v^2}\\
    & \sqrt{4+x^2+y^2-u^2-v^2}\\
    & \sqrt{2+2x^2+2y^2-u^2-v^2}
\end{aligned}
\right).
\end{equation*}
and calculate
\begin{align*}
    J(\phi_1(x+iy,u+iv)) & = \frac{1}{2} (x^2+y^2), \\
    d(J \circ \phi_1)(x+ iy,u+ iv) & = (x, \ y, \ 0, \ 0 ).
\end{align*}
Thus $dJ=0$ on $(U_1, \phi_1)$ is equivalent with $x+iy=0$, i.e.\ with vanishing of the first coordinate in $\phi_1$. For $(U_8, \phi_8)$ it is the same. For $(U_4, \phi_4)$ and $(U_5, \phi_5)$ it is the same w.r.t.\ the fifth coordinate instead of the first one.

If the first coordinate vanishes, then, by definition, $J(0, z_2, \dots, z_8)= \frac{\abs{0}^2}{2}=0$. If the fifth coordinate vanishes, we obtain from the definition of the charts via $x_5 =\sqrt{6- \abs{z_1}^2}$ immediately $\abs{z_1}^2 = 6$ and thus $J(z_1,z_2, z_3, z_4, 0, z_6, z_7, z_8)  = 3$.  
\end{proof}


\subsection{Criteria for rank one singular points}

Now let us study the singular points of rank one of $(J, H_t)$. 

\begin{lemma}
\mbox{ \ }
\begin{enumerate}
 \item 
 All rank one singular points with $dJ=0$ lie in $J^{-1}(0)$ and $J^{-1}(3)$. 
 \item
 All points in $J^{-1}(0)$ and $J^{-1}(3)$ satisfy $dJ=0$ and are thus rank zero or rank one singular points.
\end{enumerate}
\end{lemma}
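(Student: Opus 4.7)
The plan is to analyse $dJ$ chart-by-chart on the eight coordinate patches $(U_\nu,\phi_\nu)$ from Section~\ref{section:coordCharts}, exploiting the uniform structure that on $U_\nu$ only $z_\nu$ and $z_{\nu+1}$ are free while the remaining $|z_k|^2$ are pinned down by the manifold equations defining $\mcL$.

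For part (1), I would first note that the proof of Theorem~\ref{the:invariablesingpoint} has already shown on $(U_2,\phi_2)$, and by direct analogy on $(U_3,\phi_3)$, $(U_6,\phi_6)$, $(U_7,\phi_7)$, that $d(J\circ\phi_\nu)$ vanishes only at the chart origin, and that this origin is a rank zero singular point of $(J,H_t)$ for every $t$. Hence no rank one singular point with $dJ=0$ can live in these four charts. On $(U_1,\phi_1)$ the calculation in the proof of Theorem~\ref{the:zerocor} gives $d(J\circ\phi_1)=(x,y,0,0)$, so $dJ=0$ forces $z_1=0$ and thus $J=0$; the chart $(U_8,\phi_8)$ is identical, since there $z_1$ is again a free coordinate and $J=\tfrac12|z_1|^2$ depends only on it. On $(U_4,\phi_4)$ and $(U_5,\phi_5)$ the free coordinates involve $z_5$ rather than $z_1$, so I would rewrite $J=\tfrac12|z_1|^2$ via the manifold equation $|z_1|^2+|z_5|^2=6$ as $J=\tfrac12(6-|z_5|^2)$. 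The derivative then vanishes precisely when $z_5=0$, which corresponds to $J=3$. This exhausts all eight charts and yields part (1).

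For part (2), the converse falls out of the same identifications. We have $J^{-1}(0)=\{[z]\in M : z_1=0\}$, and, again via $|z_1|^2+|z_5|^2=6$, also $J^{-1}(3)=\{[z]\in M : |z_1|^2=6\}=\{[z]\in M : z_5=0\}$. A point with $z_1=0$ is covered only by the charts $U_1$ or $U_8$ (the only charts admitting a vanishing $z_1$), and a point with $z_5=0$ is covered only by $U_4$ or $U_5$; in each of these four cases the computations from part (1) show $dJ=0$. Since $dJ(p)=0$ forces the rank of $d(J,H_t)(p)$ to be at most one, such a point is automatically singular of rank zero or rank one.

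I do not anticipate any genuine obstacle. The only bookkeeping subtlety is that $z_1$ is not among the free coordinates of $\phi_4$ or $\phi_5$, which is why $J$ must first be rewritten through the manifold equation $|z_1|^2+|z_5|^2=6$ before differentiating; once that is done, the whole argument reduces to the same routine derivative computations already performed in Theorems~\ref{the:invariablesingpoint} and~\ref{the:zerocor}.
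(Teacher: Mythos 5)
Your argument is correct, and part (1) follows the paper's own route exactly: on $U_2,U_3,U_6,U_7$ the only zeros of $d(J\circ\phi_\nu)$ are the chart origins, which Theorem \ref{the:invariablesingpoint} already identifies as rank zero points, so rank one points with $dJ=0$ must sit in $U_1,U_4,U_5,U_8$, where the computations from the proof of Theorem \ref{the:zerocor} (with $J=\tfrac12\bigl(6-\abs{z_5}^2\bigr)$ on $U_4,U_5$) force $J\in\{0,3\}$. Where you genuinely diverge is part (2): the paper dispatches it in one line by observing that $0$ and $3$ are the global minimum and maximum values of $J$, so $dJ$ vanishes on $J^{-1}(0)$ and $J^{-1}(3)$ automatically, with no chart bookkeeping at all. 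You instead identify $J^{-1}(0)=\{z_1=0\}$ and $J^{-1}(3)=\{z_5=0\}$, note that such points are covered only by $U_1,U_8$ respectively $U_4,U_5$, and reuse the derivative computations from part (1); this is also valid (your chart-coverage claim is right, since $z_1$ may vanish only on $U_1\cup U_8$ and $z_5$ only on $U_4\cup U_5$), and it has the small side benefit of making the level sets $J^{-1}(0)$ and $J^{-1}(3)$ explicit as vanishing loci, but it is more work than the extremum argument the paper uses.
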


\begin{proof}
{\em 1)} Let $p$ be a singular point with $dJ(p)=0$. If $p$ lies in $ U_2$ or $U_3$ or $U_6$ or $U_7$ then, by the proof of Theorem \ref{the:invariablesingpoint}, $p$ is an invariant singular point and thus, by Theorem \ref{the:invariablesingpoint}, is not of rank one. Therefore $p$ must lie in $U_1$ or $U_4$ or $U_5$ or $U_8$. From the proof of Theorem \ref{the:zerocor}, we deduce that $J(p)$ equals either $0$ and $3$.

{\em 2)}
The value of $J$ in its global maximum is $3$ and in its global minimum $0$. Thus we automatically have $dJ=0$ on $J^{-1}(0)$ and $J^{-1}(3)$.
\end{proof}

Now we will investigate the case of rank one singular points with $dJ \neq 0$. First deduce

\begin{proposition}
\label{prop:dj}
Let $(x+iy,u+iv) \in \phi_1^{-1}(U_1)$ be a singular point of $(J, H_t= H_{(t_1, t_2, t_3, t_4)})$ with $d(J \circ \phi_1)(x+iy,u+iv) \neq 0$ and $t_1 \neq 0$. Then $v=0$. 
\end{proposition}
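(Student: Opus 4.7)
The plan is to exploit the rank-one condition to force the vanishing of $\partial_u(H_t \circ \phi_1)$ and $\partial_v(H_t \circ \phi_1)$ at the singular point, and then to separate the $\gamma_1$-contribution (which breaks the $\mathbb{S}^1$-symmetry in the $z_2$-angle) from the rest.

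Since $d(J\circ\phi_1)(x+iy,u+iv) = (x,y,0,0) \neq 0$ and the point is singular of rank one, there exists $\lam \in \R$ with $d(H_t\circ\phi_1) = \lam\, d(J\circ\phi_1)$. In particular
\begin{equation*}
 \del_u(H_t\circ\phi_1) = 0 = \del_v(H_t\circ\phi_1).
\end{equation*}
The key observation is that after pulling back by $\phi_1$, the functions $H$, $\ga_2$, $\ga_3$, $\ga_4$ are polynomial in the quantities $|z_3|^2,\ldots,|z_8|^2$ which themselves depend only on $\rho_1 := x^2+y^2$ and $\rho_2 := u^2+v^2$; while
\begin{equation*}
 \ga_1 \circ \phi_1 \;=\; \tfrac{1}{50}\,\Re(\overline{z_2 z_3 z_4}\, z_6 z_7 z_8)
  \;=\; \tfrac{u}{50}\, R(\rho_1,\rho_2),
\end{equation*}
where $R(\rho_1,\rho_2) := z_3 z_4 z_6 z_7 z_8$ is strictly positive on $U_1$ (all five coordinates being nonzero by the very definition of $U_1$ and taken as positive representatives in Section~\ref{section:coordCharts}).

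From here a direct computation using the chain rule gives, at any $(x+iy,u+iv) \in \phi_1^{-1}(U_1)$,
\begin{align*}
 \del_v(H_t\circ\phi_1) &= 2v\left(A(\rho_1,\rho_2) + \tfrac{t_1 u}{50}\,\del_{\rho_2} R(\rho_1,\rho_2)\right),\\
 \del_u(H_t\circ\phi_1) &= 2u\left(A(\rho_1,\rho_2) + \tfrac{t_1 u}{50}\,\del_{\rho_2} R(\rho_1,\rho_2)\right) + \tfrac{t_1}{50}\,R(\rho_1,\rho_2),
\end{align*}
where $A(\rho_1,\rho_2)$ collects the $\del_{\rho_2}$-derivatives of the $H$, $\ga_2$, $\ga_3$, $\ga_4$ contributions and depends only on $\rho_1,\rho_2$.

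Suppose for contradiction that $v \neq 0$. Then $\del_v(H_t\circ\phi_1) = 0$ forces $A + \tfrac{t_1 u}{50}\,\del_{\rho_2} R = 0$, and substituting this into the equation $\del_u(H_t\circ\phi_1) = 0$ yields $\tfrac{t_1}{50}\, R(\rho_1,\rho_2) = 0$. Since $t_1 \neq 0$ by hypothesis and $R > 0$ throughout $U_1$, this is a contradiction. Hence $v=0$.

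The main obstacle, which this approach sidesteps, is that a naive analysis of the system $\del_u = 0 = \del_v$ looks hopelessly transcendental; the crucial simplification is the factorisation above, which hinges on recognising that only $\ga_1$ carries an asymmetric dependence on the $z_2$-angle (reducing on $U_1$ to the single factor $u = \Re(\bar z_2)$).
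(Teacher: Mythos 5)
Your proof is correct and follows essentially the same route as the paper: both isolate the fact that only $\gamma_1$ depends on the argument of $z_2$ --- your decomposition $H_t\circ\phi_1 = G(\rho_1,\rho_2) + \tfrac{t_1 u}{50}R(\rho_1,\rho_2)$ is exactly the paper's $\Gamma_{t}(r_1,r_2) + t_1\cos(\theta_2)\,\overline{\Omega}(r_1,r_2)$ written in Cartesian rather than polar coordinates --- and then exploit the rank-one condition in the $z_2$-plane directions together with the strict positivity of $z_3z_4z_6z_7z_8$ on $U_1$. The only cosmetic difference is that the paper reads off the conclusion from the $\partial_{\theta_2}$-equation $0=-t_1\sin(\theta_2)\,\overline{\Omega}(r_1,r_2)$, whereas you eliminate between $\partial_u(H_t\circ\phi_1)=0$ and $\partial_v(H_t\circ\phi_1)=0$, which amounts to the same computation.
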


\begin{proof}
It is convenient to work with the polar coordinates $\EuScript P_1$ as defined in \eqref{eq:polarcoordinates}. Sometimes we identify $\cos(\theta_1) +i  \sin (\theta_1) = e^{i \theta_1}$ etc.
We compute
\begin{align*}
    & (J \circ \phi_1 )\bigl(r_1 \cos(\theta_1) + i \ r_1 \sin(\theta_1), \ r_2 \cos(\theta_2) + i \ r_2 \sin(\theta_2)\bigr) = \frac{r_1^2}{2},\\
    & (H_{t} \circ \phi_1)\bigl(r_1 \cos(\theta_1) +i \ r_1 \sin(\theta_1), \ r_2 \cos (\theta_2) +i \ r_2 \sin (\theta_2) \bigr) = \Gamma_{t}(r_1,r_2) + t_1 \cos(\theta_2) \ \overline{\Omega}(r_1,r_2)
\end{align*}
where $t_1$ is the first coordinate of $t=(t_1, t_2, t_3, t_4)$ and $\Ga_t: D_1 \to \R$ and $\overline{\Omega}: D_1 \to \R$ are defined by the equation above in the sense that $\overline{\Omega}$ is the unique function that satisfies
\begin{align*}
    (\gamma_1 \circ \phi_1)(r_1e^{i\theta_1},r_2e^{i\theta_2}) = \cos ( \theta_2) \ \overline{\Omega}(r_1,r_2)
\end{align*}
and can be explicitly computed as
$$
\overline{\Omega}(r_1,r_2)  =\frac{r_2}{50} \sqrt{(
 8 - r_2^2) (2 + 2 r_1^2 - r_2^2) (6 - 2 r_1^2 + r_2^2) (4 + r_1^2 - r_2^2) (2 - r_1^2 + r_2^2)}
 $$
and $\Gamma_{t}$ is given by
\begin{align*}
    \Gamma_{t}(r_1,r_2) &= \frac{1}{100} \left(100 - 200 t_1 + 1152 t_3 - 50 r_1^2 + 100 t_1 r_1^2 - 192 t_3 r_1^2 +   72 t_2 r_1^4 - 184 t_3 r_1^4  \right. \\
   & \qquad \quad + 16 t_4 r_1^4   - 48 t_2 r_1^6 + 16 t_3 r_1^6 +    8 t_4 r_1^6 + 8 t_2 r_1^8 + 8 t_3 r_1^8 + t_4 r_1^8 + 50 r_2^2 -  100 t_1 r_2^2  \\
   & \qquad \quad   - 192 t_3 r_2^2 + 304 t_3 r_1^2 r_2^2    + 24 t_2 r_1^4 r_2^2 -    8 t_3 r_1^4 r_2^2 - 8 t_4 r_1^4 r_2^2 - 8 t_2 r_1^6 r_2^2  -    24 t_3 r_1^6 r_2^2  \\
   & \qquad \quad - 2 t_4 r_1^6 r_2^2 - 88 t_3 r_2^4 - 16 t_3 r_1^2 r_2^4 + 
   2 t_2 r_1^4 r_2^4  + 26 t_3 r_1^4 r_2^4 + t_4 r_1^4 r_2^4 + 8 t_3 r_2^6  \\
   & \qquad \quad  \left. -   12 t_3 r_1^2 r_2^6 + 2 t_3 r_2^8 \right).
\end{align*}
Recall that $dJ \neq 0$ by assumption.
Thus a singular point is of rank one if there exists $s \in \mathbb{R}$ such that for $\al \in \{r_1, r_2, \theta_1, \theta_2\}$
\begin{align*}
     s \ \partial_{\al} (J \circ \phi_1 \circ \EuScript P_1) =  \partial_{\al} (H_t \circ \phi_1 \circ \EuScript P_1).
\end{align*}
This leads to the equations
\begin{align*}
   s  r_1 &=  \partial_{r_1}(H_t\circ \phi_1 \circ \EuScript P_1),  && 
    0 =  \partial_{r_2}\Gamma_{t_1,t_2,t_3,t_4}(r_1,r_2) + t_1 \cos (\theta_2) \ \partial_{r_2}\overline{\Omega}(r_1,r_2),\\
    0 &=  0, && 
    0 = - t_1\sin (\theta_2) \ \overline{\Omega}(r_1,r_2).
\end{align*}
The above equations reduce to 
\begin{align*}
    0 = \partial_{r_2}\Gamma_{t_1,t_2,t_3,t_4}(r_1,r_2) + t_1 \cos (\theta_2) \ \partial_{r_2}\overline{\Omega}(r_1,r_2), \quad\quad 
     0 = \sin (\theta_2) \ \overline{\Omega}(r_1,r_2).
\end{align*}

By Lemma \ref{lem:omega},  $\overline{\Omega}(r_1,r_2)$ is non zero on its domain of definition. Therefore the only way to have $0 = \sin (\theta_2) \ \overline{\Omega}(r_1,r_2)$ is to require either $\theta_2 =0$ or $\theta_2 =\pi$. Both cases result in $v=0$.
\end{proof}

In particular, we obtain

\begin{corollary}
\label{cor:finalsing}
 Let $r_1 \neq 0$ and $t_1 \neq 0$ and let $(r_1, r_2) \in D_1$ be a solution of 
\begin{equation}
\label{eq:finalsing}
    0 = (\partial_{r_2}\Gamma_{t_1,t_2,t_3,t_4}(r_1,r_2))^2 - t_1^2(\partial_{r_2}\overline{\Omega}(r_1,r_2))^2.
\end{equation}
Then for arbitrary $\theta_1$ and either $\theta_2 = 0$ or $\theta_2 = \pi$, we find that $ (\phi_1 \circ \EuScript P_1) (r_1, r_2, \theta_1, \theta_2) $ is a singular point of rank one.  
\end{corollary}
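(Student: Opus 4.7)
My plan is to deduce this as a direct reversal of the computation carried out in the proof of Proposition~\ref{prop:dj}. Specifically, a point $q = (\phi_1\circ\EuScript P_1)(r_1,r_2,\theta_1,\theta_2)$ with $r_1 \neq 0$ satisfies $d(J\circ\phi_1\circ\EuScript P_1)(q) \neq 0$ (since only the $r_1$-partial of $J\circ\phi_1\circ\EuScript P_1 = r_1^2/2$ is nonzero, and it equals $r_1$). Therefore $q$ is a rank one singular point of $(J,H_t)$ if and only if there exists $s \in \mathbb{R}$ such that $d(H_t\circ\phi_1\circ\EuScript P_1)(q) = s \, d(J\circ\phi_1\circ\EuScript P_1)(q)$, and this in turn is equivalent to the four scalar equations I would obtain by evaluating on $\partial_{r_1},\partial_{r_2},\partial_{\theta_1},\partial_{\theta_2}$.

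Using the explicit expressions for $J\circ\phi_1\circ\EuScript P_1$ and $H_t\circ\phi_1\circ\EuScript P_1$ recalled in the proof of Proposition~\ref{prop:dj}, the $\partial_{\theta_1}$-equation reduces to $0=0$ (so $\theta_1$ is free), the $\partial_{r_1}$-equation reads $s r_1 = \partial_{r_1}(H_t\circ\phi_1\circ\EuScript P_1)$ and hence simply defines $s$ since $r_1 \neq 0$, and the remaining two equations are
\begin{equation*}
0 = \partial_{r_2}\Gamma_t(r_1,r_2) + t_1 \cos(\theta_2)\,\partial_{r_2}\overline{\Omega}(r_1,r_2), \qquad 0 = -t_1 \sin(\theta_2)\,\overline{\Omega}(r_1,r_2).
\end{equation*}
By Lemma~\ref{lem:omega} one has $\overline{\Omega}(r_1,r_2) \neq 0$ on its domain of definition (after translating the conclusion of the lemma back from $\phi_1$-coordinates to polar coordinates, $\gamma_1 \circ \phi_1 = 0$ corresponds to $u=0$, which under $\EuScript P_1$ means $r_2\cos(\theta_2)=0$), so combined with $t_1 \neq 0$ the second equation forces $\sin(\theta_2)=0$, i.e.\ $\theta_2 \in \{0,\pi\}$, and correspondingly $\cos(\theta_2) \in \{+1,-1\}$.

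Under the assumption of the corollary, $(r_1,r_2)\in D_1$ satisfies
\begin{equation*}
\bigl(\partial_{r_2}\Gamma_t(r_1,r_2)\bigr)^2 = t_1^2 \bigl(\partial_{r_2}\overline{\Omega}(r_1,r_2)\bigr)^2,
\end{equation*}
which factors as $\bigl(\partial_{r_2}\Gamma_t + t_1 \partial_{r_2}\overline{\Omega}\bigr)\bigl(\partial_{r_2}\Gamma_t - t_1 \partial_{r_2}\overline{\Omega}\bigr) = 0$. If the first factor vanishes I would pick $\theta_2 = 0$ (so $\cos\theta_2 = +1$), and if the second factor vanishes I would pick $\theta_2 = \pi$ (so $\cos\theta_2 = -1$); in either case the first displayed equation above is satisfied. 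With $\theta_1$ arbitrary and $s$ defined from the $\partial_{r_1}$-equation, all four equations hold, so $d(H_t\circ\phi_1\circ\EuScript P_1) = s\, d(J\circ\phi_1\circ\EuScript P_1)$ at the corresponding point, which together with $d(J\circ\phi_1\circ\EuScript P_1)\neq 0$ (from $r_1\neq 0$) certifies a rank one singular point.

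The argument is essentially a bookkeeping reversal of the reduction in Proposition~\ref{prop:dj}, so I do not anticipate any serious obstacle; the only non-cosmetic point is the appeal to Lemma~\ref{lem:omega} in polar coordinates to guarantee $\overline{\Omega}(r_1,r_2) \neq 0$, which is what cleanly forces $\theta_2 \in \{0,\pi\}$ and lets the squaring step be losslessly reversed.
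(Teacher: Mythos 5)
Your proposal is correct and follows essentially the same route as the paper: the paper's (very terse) proof likewise just reverses the reduction from Proposition \ref{prop:dj}, noting that the squared equation \eqref{eq:finalsing} factors into $0 = \partial_{r_2}\Gamma_t \pm t_1\,\partial_{r_2}\overline{\Omega}$ and that choosing $\theta_2 = 0$ or $\theta_2 = \pi$ according to which factor vanishes satisfies all the rank-one conditions, with $\theta_1$ free and $s$ determined by the $\partial_{r_1}$-equation. Your write-up is just a more explicit version of this (and the appeal to Lemma \ref{lem:omega} is actually unnecessary in this direction, since $\theta_2 \in \{0,\pi\}$ makes $\sin(\theta_2)=0$ and hence the fourth equation holds automatically), but there is no gap.
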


\begin{proof}
The choice of $\theta_2 \in \{0, \pi\}$ at the end of the proof of Proposition \ref{prop:dj} yields the two equations
$$ 0 = \partial_{r_2}\Gamma_{t_1,t_2,t_3,t_4}(r_1,r_2) \pm t_1 \partial_{r_2}\overline{\Omega}(r_1,r_2) $$
which can be combined to 
$$ 0 = (\partial_{r_2}\Gamma_{t_1,t_2,t_3,t_4}(r_1,r_2))^2 - t_1^2(\partial_{r_2}\overline{\Omega}(r_1,r_2))^2. $$
\end{proof}

The advantage of equation \eqref{eq:finalsing} is that it can easily be reduced to a polynomial since there are no square roots involved in contrast to the original equations. 
A disadvantage of this combined equation is that after having found a solution we still need to check whether these solutions satisfy 
$$0 = \partial_{r_2}\Gamma_{t_1,t_2,t_3,t_4}(r_1,r_2) + t_1 \partial_{r_2}\overline{\Omega}(r_1,r_2)
\quad \mbox{ or } \quad 
0 = \partial_{r_2}\Gamma_{t_1,t_2,t_3,t_4}(r_1,r_2) - t_1 \partial_{r_2}\overline{\Omega}(r_1,r_2).
$$


\subsection{Criteria for singular points via symplectic reduction}
\label{sec:Reducedmanifold}

Recall the notion of a symplectic quotient from Theorem \ref{marsdenWeinsteinToric} and apply it to $J: M \to \R$ and its induced $\mbS^1$-action to obtain $ J^{-1}(j) \slash \mbS^1=: M^{red, j}$. 
Denote by $H_t^{red, j}: M^{red, j} \to \R$ the Hamiltonian $H_t :M \to \R$ descended to the symplectic quotient $M^{red, j}$.
In this section, we show how to reduce the problem of finding and determining singular points of $(J, H_t): M \to \R^2$ on $M$ to finding and determining singular points of $H_t^{red, j}: M^{red, j} \to \R$. The latter problem is easier to solve since it is lower dimensional.

The reduced space $M^{red, j}$ is plotted for various values of $J$ in Figure \ref{fig:reducedmanifold}. 
For details about the topology and geometry of $M^{red, j}$, we refer the reader to the papers by De Meulenaere $\&$ Hohloch \cite[Section 4]{meulenaere2019family} and Le Floch $\&$ Palmer \cite{floch2019semitoric}. In short, for all regular values of $J$, the reduced spaces $M^{red, j}$ are smooth manifolds diffeomorphic to $\mbS^2$. The $J$-values $j \in \{0, 1, 2, 3\}$ are singular and the shape of the reduced space depends on the $J$-value being at a maximum/minimum of $J$ or not: according to Le Floch $\&$ Palmer \cite[Lemma 2.12]{floch2019semitoric} and De Meulenaere $\&$ Hohloch \cite[Lemma 4.1]{meulenaere2019family}, the reduced spaces for $j \in \{0,3\}$ are diffeomorphic to $\mbS^2$ but for $j \in \{1,2\}$ only homeomorphic to $\mbS^2$.

\begin{figure}[ht]
    \centering
    \includegraphics[scale = .25]{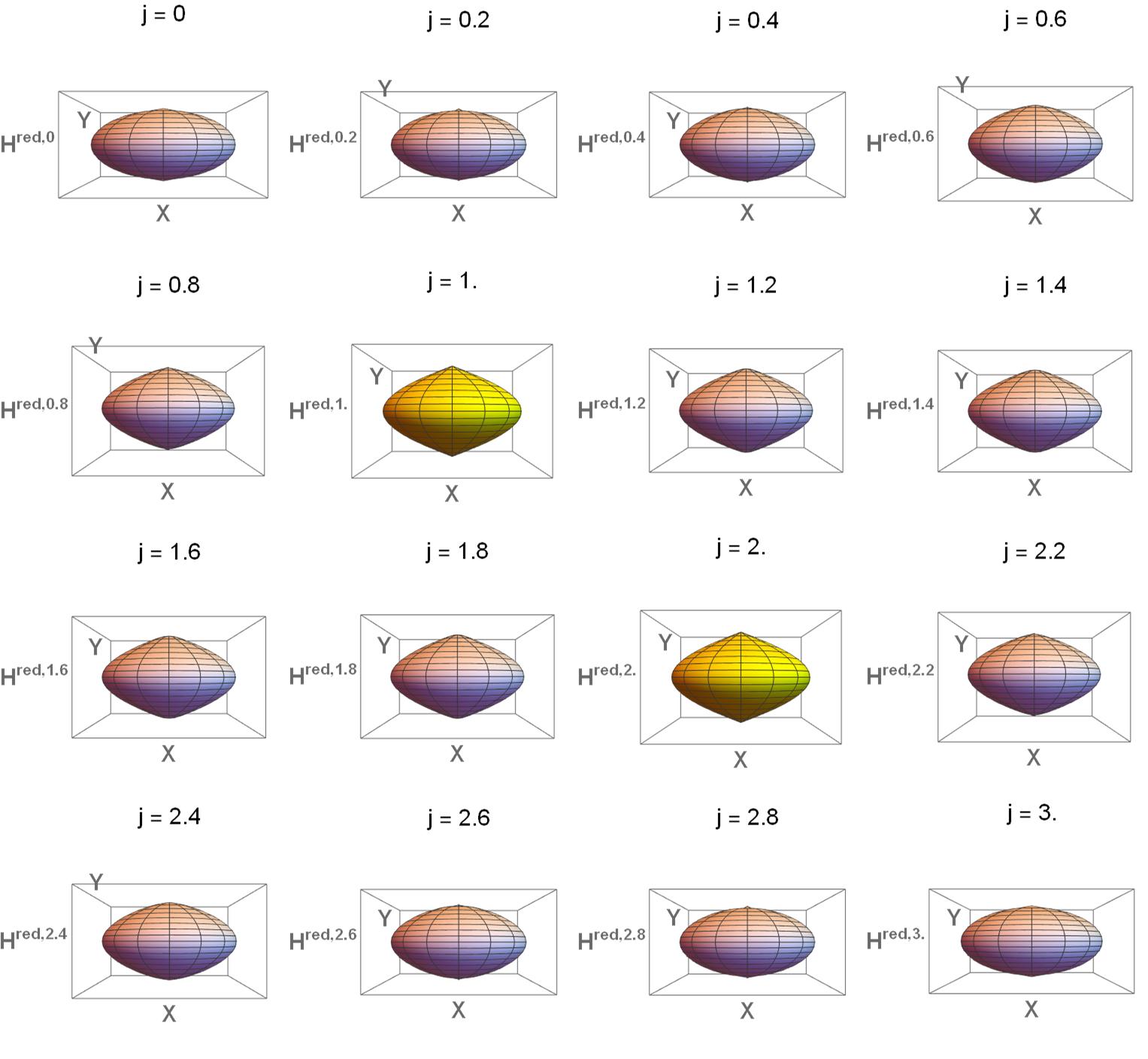}
    \caption{The reduced manifold $M^{red, j}$ for different values $j$ of $J$.}
    \label{fig:reducedmanifold}
\end{figure}

For regular levels of $J$, the following statement explains the relation between rank one singular points on $M$ and singular points on the symplectic quotient. Recall that a group action is {\em free} if it has no fixed points. Acting freely on a point means that the stabiliser of this point is trivial.

\begin{lemma}
\label{lem:red}
Let $p \in M$ be a singular point of $(J, H_t)$ such that the $\mbS^1$-action induced by $J$ acts freely on $p$ and denote by $[p] \in M^{red, J(p)}=J^{-1}(J(p)) \slash \mbS^1$ the equivalence class of $p$. Then
\begin{enumerate}
    \item 
    $p$ is non-degenerate if and only if $[p] \in M^{red, J(p)}$ is non-degenerate (in the sense of Morse theory) for $H_t^{red, J(p)}$.
    
     \item 
     $p$ is hyperbolic-regular if and only if $[p] \in M^{red, J(p)}$ is a hyperbolic fixed point for $H_t^{red, J(p)}$.
     
     \item 
     $p$ is elliptic-regular if and only if $[p] \in M^{red,J(p)}$ is an elliptic fixed point for $H_t^{red, J(p)}$.
\end{enumerate}
\end{lemma}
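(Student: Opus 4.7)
The plan is to reduce Lemma~\ref{lem:red} to the standard description of rank one singular points in terms of the quotient $L_p^\perp/L_p$ and then to the Morse theory of the reduced Hamiltonian $H_t^{red,J(p)}$ on the $2$-dimensional symplectic quotient. Since the $\mbS^1$-action acts freely at $p$, one has $X^J(p)\neq 0$, hence $dJ(p)\neq 0$, and so the singular point $p$ must have rank one with $dH_t(p)=\lambda\,dJ(p)$ for some $\lambda\in\R$. Consequently $X^{H_t}(p)=\lambda X^J(p)$, so $L_p=\Span\{X^J(p),X^{H_t}(p)\}=\R X^J(p)$ and $L_p^\perp=\ker dJ(p)=T_pJ^{-1}(J(p))$. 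Freeness also permits a local application of Theorem~\ref{marsdenWeinsteinToric}, giving the canonical identification $T_{[p]}M^{red,J(p)}\cong L_p^\perp/L_p$, with the reduced symplectic form $\om^{red,J(p)}_{[p]}$ induced by the restriction of $\om_p$ to $L_p^\perp$ (which descends because $L_p$ is the kernel of $\om_p\big|_{L_p^\perp}$).

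\textbf{Item (1): non-degeneracy.} Because $H_t$ is $\mbS^1$-invariant it descends to $H_t^{red,J(p)}$, and the identification above yields $dH_t^{red,J(p)}([p])=dH_t(p)|_{L_p^\perp}=\lambda\,dJ(p)|_{L_p^\perp}=0$, so $[p]$ is automatically a critical point. I would then verify in slice coordinates transverse to the $\mbS^1$-orbit that the Hessian $d^2 H_t^{red,J(p)}([p])$ equals the restriction of $d^2(H_t-\lambda J)(p)$ to $L_p^\perp$ pushed down to the quotient; this descent is well-defined because $H_t-\lambda J$ is $\mbS^1$-invariant and has vanishing differential at $p$. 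By Definition~\ref{nondeg} applied with $(c_1,c_2)=(-\lambda,1)$, non-degeneracy of $p$ is precisely invertibility of this bilinear form on $L_p^\perp/L_p$, which coincides with the Morse non-degeneracy of $[p]$ for $H_t^{red,J(p)}$.

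\textbf{Items (2) and (3): type distinction.} On the $2$-dimensional symplectic manifold $M^{red,J(p)}$, the linearisation of $X^{H_t^{red,J(p)}}$ at $[p]$ is traceless and its two eigenvalues are determined by the signature of the reduced Hessian: a pair $\pm i\alpha$ when the Hessian is (positive or negative) definite, and a pair $\pm\alpha$ when it is indefinite. Under the identifications of the first paragraph, these eigenvalues agree with those of $\om_p^{-1}\bigl(d^2 H_t(p)-\lambda\,d^2 J(p)\bigr)$ on $L_p^\perp/L_p$, which by the eigenvalue characterisation of non-degenerate rank one singularities recalled in Section~\ref{sec:prelim} labels $p$ as elliptic-regular in the imaginary case and hyperbolic-regular in the real case. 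I expect the main technical step to be the careful verification, via an explicit local slice for the free $\mbS^1$-action, that the reduced symplectic form and reduced Hessian are related to their unreduced counterparts on $L_p^\perp/L_p$ as claimed; once this intertwining is in hand, the trace-zero argument on a $2$-dimensional symplectic space is standard.
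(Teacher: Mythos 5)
Your proposal is correct, but it takes a genuinely different route only in the sense that the paper does not argue at all: its proof of Lemma \ref{lem:red} is a one-line citation to Le Floch and Palmer \cite[Lemma 2.6]{floch2019semitoric}. What you wrote is essentially a self-contained reconstruction of that cited result: freeness gives $X^J(p)\neq 0$, hence rank one with $dH_t(p)=\lambda\, dJ(p)$; the identification $T_{[p]}M^{red,J(p)}\cong L_p^\perp/L_p$ with $L_p^\perp=\ker dJ(p)$; the descent of $d^2(H_t-\lambda J)(p)$ to the Morse Hessian of $H_t^{red,J(p)}$ (legitimate because $H_t-\lambda J$ is $\mbS^1$-invariant with a critical orbit through $p$, so $X^J(p)$ lies in the kernel of its Hessian and there is no second-fundamental-form correction when restricting to the level set); and finally the traceless $2\times 2$ dichotomy, definite Hessian giving $\pm i\alpha$ and indefinite giving $\pm\alpha$, which matches Definition \ref{nondeg} with $(c_1,c_2)=(-\lambda,1)$ and is exactly the mechanism the paper later exploits in Lemma \ref{lem:hypcondition}. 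Two small caveats, neither a genuine gap: Theorem \ref{marsdenWeinsteinToric} as stated is global (free action on the whole regular level set), so the local slice version you invoke is really needed, since for $J(p)\in\{1,2\}$ the full reduced space is only homeomorphic to $\mbS^2$; and the eigenvalue characterisation of non-degenerate \emph{rank one} points is not actually recalled in Section \ref{sec:prelim} (that section states it only for rank-zero points), so the final translation from real versus imaginary eigenvalues of the induced operator on $L_p^\perp/L_p$ to the labels hyperbolic-regular and elliptic-regular should be supported by the standard reference \cite{bolsinov_fomenko_2004} rather than attributed to the paper's preliminaries. The benefit of your route is a proof verifiable within the paper itself; the cost is redoing linear algebra that the citation packages away.
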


\begin{proof}
 See for example Le Floch $\&$ Palmer \cite[Lemma 2.6]{floch2019semitoric}.
\end{proof}

\begin{lemma}
\label{lem:dJnotZero}
The $\mbS^1$-action induced by $J$ acts freely on the set of points with $dJ \neq 0$.
\end{lemma}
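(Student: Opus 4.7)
The plan is to deduce freeness directly from the equal-period statement of Theorem \ref{the:equalperiod}. Fix $p \in M$ with $dJ(p) \neq 0$ and consider its stabiliser
\begin{equation*}
\mbS^1_p := \left\{ t \in \mbS^1 \mid \Phi^J_t(p) = p \right\},
\end{equation*}
which is a closed subgroup of $\mbS^1$, hence either trivial, all of $\mbS^1$, or a finite cyclic group of order $k \geq 2$.

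First I would rule out $\mbS^1_p = \mbS^1$: this would mean $p$ is a fixed point of the flow, so $X^J(p) = 0$, and by non-degeneracy of $\om$ together with the defining relation $\om(X^J, \cdot) = dJ(\cdot)$ we obtain $dJ(p) = 0$, contradicting the hypothesis. Next I would rule out the case where $\mbS^1_p$ is finite cyclic of order $k \geq 2$: in that situation the minimal positive return time would be $P^J(p) = 2\pi/k < 2\pi$, which directly contradicts $P^J(p) = 2\pi$ established in Theorem \ref{the:equalperiod}. Therefore $\mbS^1_p = \{0\}$, and the $\mbS^1$-action is free at $p$.

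There is no real obstacle here; the entire content is packaged in Theorem \ref{the:equalperiod}. The only point worth being careful about is the identification of the period of the flow with the size of the stabiliser under the identification $\mbS^1 = \R / 2\pi \Z$, but this is immediate from the definition of the $\mbS^1$-action as the $2\pi$-periodic Hamiltonian flow of $J$.
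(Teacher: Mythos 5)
Your proof is correct and follows essentially the same route as the paper, which likewise deduces freeness directly from Theorem \ref{the:equalperiod} by noting that the $\mbS^1$-action has maximal period $2\pi$ while points with $dJ \neq 0$ have minimal period exactly $2\pi$. Your explicit case analysis of the stabiliser (trivial, finite cyclic, or all of $\mbS^1$) merely spells out what the paper leaves implicit, so there is no substantive difference.
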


\begin{proof}
In our convention, the maximal period of the $\mbS^1$-action induced by $J$ equals $2\pi$. 
By Theorem \ref{the:equalperiod}, the period of points with $dJ \neq 0$ is $2 \pi$, thus the action on this set is free.
\end{proof}

Given a ($2 \times 2$)-matrix $A=\left( \begin{smallmatrix} a_{11} & a_{12} \\ a_{21} & a_{22} \end{smallmatrix} \right) $, its characteristic polynomial is given by
$$
\chi_A(\lam):= \det(A-\lam \Id)= \lam^2 -(a_{11} + a_{22}) \lam + a_{11}a_{22} - a_{12} a_{21}.
$$
By means of $\det(A) = a_{11}a_{22} - a_{12} a_{21}$ and $\tr(A)= a_{11} + a_{22}$, we can reformulate $\chi_A$ as
$$
\chi_A(\lam)= \lam^2 - \tr(A) \lam + \det(A)
$$
so that we obtain the following formula for the eigenvalues of $A$:
$$
\lam_\pm = \tfrac{1}{2} \left( \tr(A) \pm \sqrt{\tr(A)^2 - 4 \det(A)} \ \right).
$$
Now consider

\begin{lemma}
\label{lem:hypcondition}
\label{lem:ellipcondition}
Let $p \in U_1$ be a rank one singular point with $dJ(p) \neq 0$.
\begin{enumerate}
 \item 
 $p$ is hyperbolic-regular for $(J, H_t)$ if and only if 
\begin{align*}
    \partial^2_u H_{t}^{red, J(p)} \partial^2_v H_{t}^{red, J(p)} - \left( \partial_u\partial_v H_{t}^{red, J(p)} \right )^2<0.
\end{align*}

\item
$p$ is elliptic-regular for $(J, H_t)$ if and only if 
\begin{align*}
    \partial^2_u H_{t}^{red, J(p)} \partial^2_v H_{t}^{red, J(p)} - \left( \partial_u\partial_v H_{t}^{red, J(p)} \right )^2 > 0.
\end{align*}
\end{enumerate}
\end{lemma}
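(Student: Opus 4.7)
The plan is to reduce the problem to a two-dimensional Hessian determinant calculation on the symplectic quotient $M^{red,J(p)}$ and then invoke the elementary classification of non-degenerate fixed points of a planar Hamiltonian system.

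First, since $dJ(p)\neq 0$, Lemma \ref{lem:dJnotZero} guarantees that the $\mbS^1$-action generated by $J$ acts freely on $p$. Hence Lemma \ref{lem:red}(2)--(3) is applicable and the task is to show that the fixed point $[p]$ of $H_t^{red,J(p)}$ on $M^{red,J(p)}$ is hyperbolic (respectively elliptic) exactly when the displayed determinant is negative (respectively positive).

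Next, I would set up explicit coordinates near $[p]$. The polar-coordinate computation in the proof of Theorem \ref{the:equalperiod} shows that on $U_1$ one has $X^{J\circ\phi_1}=\partial_{\theta_1}$ and $(\phi_1\circ\EuScript{P}_1)^*\omega = r_1\,dr_1\wedge d\theta_1 + r_2\,dr_2\wedge d\theta_2$. Therefore the $J$-flow acts solely by rotation of the first complex coordinate $x+iy$ while leaving $(u,v)$ fixed, so on $J^{-1}(j)\cap U_1$ (a fixed $r_1$-circle crossed with the $(u,v)$-plane) the functions $(u,v)$ descend to local coordinates on $M^{red,j}$ near $[p]$. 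A short bookkeeping computation gives $r_2\,dr_2\wedge d\theta_2 = du\wedge dv$, so the reduced symplectic form is the standard $du\wedge dv$, and $H_t^{red,J(p)}$ is the function $(u,v)\mapsto H_t\circ\phi_1(r_1 e^{i\theta_1},u+iv)$ for any admissible fixed $(r_1,\theta_1)$ with $\frac{1}{2}r_1^2=J(p)$.

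Finally, I would classify $[p]$ as a critical point of $H_t^{red,J(p)}$. With respect to $du\wedge dv$ its Hamiltonian vector field is $(\partial_v H_t^{red,J(p)})\,\partial_u - (\partial_u H_t^{red,J(p)})\,\partial_v$. Linearising at the zero $[p]$ yields a $2\times 2$ matrix $A$ with $\tr A=0$ and
\[
\det A \;=\; \partial_u^2 H_t^{red,J(p)}\cdot\partial_v^2 H_t^{red,J(p)} - \bigl(\partial_u\partial_v H_t^{red,J(p)}\bigr)^2.
\]
Its eigenvalues are therefore $\pm\sqrt{-\det A}$, so $[p]$ is a saddle (real non-zero eigenvalues), i.e.\ a hyperbolic fixed point, precisely when $\det A<0$, and a centre (purely imaginary eigenvalues), i.e.\ an elliptic fixed point, precisely when $\det A>0$. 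Combined with the reformulation from the first step this yields both claims.

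The only delicate part I expect is cleanly justifying that $(u,v)$ really pass to coordinates on $M^{red,J(p)}$ with the standard reduced symplectic form; this is a short verification built entirely on the polar description recalled above and on Lemma \ref{lem:sympform}, and does not require anything beyond the material already collected in Sections \ref{sec:theoctsyst}--\ref{sec:defSystem}.
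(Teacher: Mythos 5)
Your argument is correct and follows essentially the same route as the paper's own proof: both reduce via Lemma \ref{lem:red} to the fixed point $[p]$ of $H_t^{red,J(p)}$ and then observe that the traceless $2\times 2$ matrix $\om_p^{-1}d^2H_t^{red,J(p)}$ (your linearised Hamiltonian vector field in the coordinates $(u,v)$ with reduced form $du\wedge dv$) has real nonzero, respectively purely imaginary, eigenvalues exactly according to the sign of the Hessian determinant. Your extra steps --- invoking Lemma \ref{lem:dJnotZero} to justify freeness and checking that $(u,v)$ give reduced Darboux coordinates --- only make explicit what the paper's proof uses implicitly.
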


\begin{proof}
We only prove the first claim since the second one follows analogously.
By Lemma \ref{lem:red}, the point $p$ is hyperbolic-regular if and only if
\begin{align*}
    \om^{-1}_p d^2H^{red, J(p)}(p)
    = 
    \begin{pmatrix}
    -\partial_u\partial_v H_{t}^{red, J(p)} &  -\partial^2_u H_{t}^{red, J(p)} \\
   \partial^2_v H_{t}^{red, J(p)} & \partial_u\partial_v H_{t}^{red, J(p)}
    \end{pmatrix}
\end{align*}
has two non-zero real eigenvalues.
Since this matrix is traceless this is equivalent with  
\begin{align*}
    \det\begin{pmatrix}
    -\partial_u\partial_v H_{t}^{red, J(p)} &  -\partial^2_u H_{t}^{red, J(p)}  \\
   \partial^2_v H_{t}^{red, J(p)} & \partial_u\partial_v H_{t}^{red, J(p)}
    \end{pmatrix} < 0.
\end{align*}
This gives us 
\begin{align*}
\partial^2_u H_{t}^{red, J(p)} \partial^2_v H_{t}^{red, J(p)} - \left( \partial_u\partial_v H_{t}^{red, J(p)} \right )^2<0.
\end{align*}
\end{proof}


\subsection{Criteria for degenerate singular points}

Recall that we obtained $(J, H_t)$ from the toric system $(J, H)$ by perturbing $H$ to $H_t$ while leaving $J$ unchanged. This allows us to give a fairly simple criterion in terms of the Hessian of $H_t$ for degenerate singular points whenever $dJ \neq 0$, as we will see below.

\begin{theorem}
Consider $(J, H_t)$ and assume $t_1 \neq 0$ in $t=(t_1, t_2, t_3, t_4)$.
Recall the polar coordinates $\EuScript P_1 (r_1, r_2, \theta_1, \theta_2)= (r_1e^{i \theta_1},r_2 e^{i \theta_2})$ from \eqref{eq:polarcoordinates}. 
Let $(r_1e^{i \theta_1},r_2 e^{i \theta_2}) \in \phi_1^{-1}( U_1)$ be a rank one singular point of $(J, H_t)$ with $dJ \neq 0$. Then 
$$
 (r_1e^{i \theta_1},r_2 e^{i \theta_2}) \ \mbox{ degenerate } 
\quad \IFF \quad  
\bigl(\partial^2_{r_2}\Gamma_t(r_1,r_2)\bigr)^2 + t_1 \cos(\theta_2) \ \partial^2_{r_2}\overline{\Omega}(r_1,r_2)=0.
$$
In particular, these degenerate singular points satisfy
\begin{equation}
\label{eq:criterionDeg}
     \left(\partial^2_{r_2}\Gamma_{t}(r_1,r_2)\right)^2- t_1^2\left(\partial^2_{r_2}\overline{\Omega}(r_1,r_2)\right)^2=0.
\end{equation}
\end{theorem}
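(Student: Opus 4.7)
The plan is to reduce the question to Morse degeneracy on the reduced manifold and then compute the Hessian in polar coordinates. Since $dJ(p)\neq 0$, Lemma \ref{lem:dJnotZero} tells us that the $\mbS^1$-action generated by $J$ is free at $p$, so by Lemma \ref{lem:red} the rank-one point $p$ is non-degenerate for $(J,H_t)$ if and only if $[p]$ is a Morse non-degenerate critical point of the reduced Hamiltonian $H_t^{red, J(p)} : M^{red, J(p)} \to \R$. Hence the task reduces to analysing the Hessian of this reduced Hamiltonian at $[p]$.

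I would then work in the polar chart $\EuScript{P}_1$. Because $J\circ\phi_1\circ\EuScript{P}_1 = r_1^2/2$ and the $\mbS^1$-action rotates $\theta_1$, the reduced manifold at level $j = J(p)$ is locally parameterised, wherever $r_2>0$, by the smooth coordinates $(r_2,\theta_2)$. The computation already carried out in the proof of Proposition \ref{prop:dj} gives
$$
H_t^{red,\, j}(r_2,\theta_2) = \Gamma_t(r_1,r_2) + t_1\cos(\theta_2)\,\overline{\Omega}(r_1,r_2)
$$
with $r_1=\sqrt{2j}$ fixed, while Proposition \ref{prop:dj} forces $\theta_2 \in \{0,\pi\}$ at the critical point. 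Thus $\sin(\theta_2)=0$, the mixed partial $\partial_{r_2}\partial_{\theta_2}H_t^{red,\, j} = -t_1\sin(\theta_2)\,\partial_{r_2}\overline{\Omega}$ vanishes, and the Hessian in $(r_2,\theta_2)$ is diagonal with entries
\begin{align*}
\partial^2_{r_2}H_t^{red,\, j} &= \partial^2_{r_2}\Gamma_t(r_1,r_2) + t_1\cos(\theta_2)\,\partial^2_{r_2}\overline{\Omega}(r_1,r_2), \\
\partial^2_{\theta_2}H_t^{red,\, j} &= -t_1\cos(\theta_2)\,\overline{\Omega}(r_1,r_2).
\end{align*}

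The last step is to show that only the $(1,1)$-entry can vanish. Since $r_2>0$ and $\cos(\theta_2)=\pm 1$, the corresponding preimage satisfies $u = r_2\cos(\theta_2)\neq 0$, so Lemma \ref{lem:omega}, applied contrapositively to $\gamma_1=\cos(\theta_2)\,\overline{\Omega}$, gives $\overline{\Omega}(r_1,r_2)\neq 0$. Combined with $t_1\neq 0$, this forces $\partial^2_{\theta_2}H_t^{red,\, j}\neq 0$, so degeneracy of the Hessian is equivalent to the vanishing of its $(1,1)$-entry, namely
$$
\partial^2_{r_2}\Gamma_t(r_1,r_2) + t_1\cos(\theta_2)\,\partial^2_{r_2}\overline{\Omega}(r_1,r_2) = 0,
$$
which is the natural (unsquared) form of the claimed equivalence. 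Squaring this identity and using $\cos^2(\theta_2)=1$ eliminates $\theta_2$ and produces the $\theta_2$-free necessary condition \eqref{eq:criterionDeg}. The only conceptual point that needs care is that the Hessian matrix computed in the $(r_2,\theta_2)$-chart genuinely detects intrinsic degeneracy on $M^{red,\, j}$; this is fine because at a critical point the Hessian is coordinate-independent as a bilinear form on the tangent space, and $(r_2,\theta_2)$ are smooth local coordinates wherever $r_2>0$, so the rank of the matrix and of the bilinear form coincide.
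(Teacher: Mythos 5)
Your proposal is correct and follows essentially the same route as the paper: reduce via Lemma \ref{lem:dJnotZero} and Lemma \ref{lem:red} to Morse (non-)degeneracy of $H_t^{red,j}$, compute its Hessian in the $(r_2,\theta_2)$-chart where $\sin(\theta_2)=0$ kills the mixed term, use $t_1\neq 0$ and $\overline{\Omega}\neq 0$ (via Lemma \ref{lem:omega}) to see that only the $\partial^2_{r_2}$-entry can vanish, and then square to eliminate $\cos(\theta_2)=\pm 1$. The only cosmetic difference is that you diagonalise and rule out the $\theta_2$-entry directly (with a slightly more careful contrapositive use of Lemma \ref{lem:omega}), whereas the paper expands the determinant and factors it — the same argument in substance, and your unsquared condition matches the paper's equation \eqref{eq:options}.
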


\begin{proof}
Let the rank of the singular point $z:=(r_1e^{i \theta_1},r_2 e^{i \theta_2})$ be one but $d (J \circ \phi_1 ) \neq 0$.
Please note that, for sake of readability, we write in the following computation $H_t^{red,j}$ instead of $H_t^{red,j} \circ \phi_1 \circ \EuScript P_1$ where $j:=J(\phi_1(z))$.
Recall that degeneracy of a singular point $z$ with $d(J \circ \phi_1) (z) \neq 0$ is, according to Lemma \ref{lem:red} and Lemma \ref{lem:dJnotZero}, equivalent with  
\begin{equation}
\label{eq:deg}
0 = \det \left(d^2 H_t^{red,j} \right) = \det  
    \begin{pmatrix}
    \partial^2_{r_2} H_t^{red,j} && \partial_{r_2}\partial_{\theta_2} H_t^{red,j}\\
     \partial_{r_2}\partial_{\theta_2} H_t^{red,j} && \partial^2_{\theta_2} H_t^{red,j}
    \end{pmatrix}
    .    
\end{equation}
Using the formulas computed in the proof of Proposition \ref{prop:dj} and in particular $\sin(\theta_2)=0$, equation \eqref{eq:deg} becomes
\begin{align*}
    0 &= \det 
    \begin{pmatrix}
    \partial^2_{r_2}\Gamma_{t}\left(r_1,r_2 \right) + t_1 \cos ( \theta_2) \ \partial^2_{r_2}\overline{\Omega}\left(r_1,r_2\right) &&  -t_1 \sin (\theta_2) \ \partial_{r_2}\overline{\Omega}\left(r_1,r_2\right) \\
    -t_1 \sin (\theta_2) \ \partial_{r_2}\overline{\Omega}\left(r_1,r_2\right) & &-t_1\cos (\theta_2) \ \overline{\Omega}\left(r_1,r_2 \right)
    \end{pmatrix}\\
    &=\det \begin{pmatrix}
    \partial^2_{r_2}\Gamma_{t}\left(r_1,r_2 \right) + t_1 \cos (\theta_2) \ \partial^2_{r_2}\overline{\Omega}\left(r_1,r_2\right) && 0 \\
    0 &&-t_1\cos (\theta_2) \ \overline{\Omega}\left(r_1,r_2\right)
    \end{pmatrix}\\
    &= -t_1\cos (\theta_2) \ \overline{\Omega}\left(r_1,r_2\right) \ \partial^2_{r_2}\Gamma_{t}\left(r_1,r_2\right) -t _1^2 \cos^2 ( \theta_2) \ \overline{\Omega}\left(r_1,r_2\right) \ \partial^2_{r_2}\overline{\Omega}\left(r_1,r_2\right).
\end{align*}
Using the fact that $t_1 \neq 0$ and Lemma \ref{lem:omega}, we obtain
\begin{equation}
\label{eq:options}
    \partial^2_{r_2}\Gamma_{t}(r_1,r_2)+ t_1\cos (\theta_2) \ \partial^2_{r_2}\overline{\Omega}(r_1,r_2)=0.
\end{equation}
Moreover, $\sin (\theta_2) = 0$ implies $\cos (\theta_2) = \pm 1$. Inserting both options in equation \eqref{eq:options} and multiplying them, yields 
\begin{align*}
     \left(\partial^2_{r_2}\Gamma_{t}(r_1,r_2)\right)^2- t_1^2\left(\partial^2_{r_2}\overline{\Omega}(r_1,r_2)\right)^2=0.
\end{align*}
\end{proof}

Note that equation \eqref{eq:criterionDeg} can be written as a polynomial fraction.
In particular, when working with {\em Mathematica}, determining the zeros of a polynomial is much faster than determining the zeros of an arbitrary equation that contains square roots. 
In addition, instead of solving two equations, finding degenerate points can be done by solving only one equation.


\section{Hyperbolic-regular fibres}
\label{sec:examhyp-reg-fib}

In this section, we will study the occurrence and topology of hyperbolic-regular fibres in the systems $(J, H_t)$ for certain values of the parameter $t$ by means of symplectic reduction.
Note that the coordinate charts $(U_\nu, \phi_\nu)$ of $(M, \om)$ for $1 \leq \nu \leq 8$ are closed w.r.t.\ the actions induced by $J$ and $H$ but not always w.r.t.\ the action induced by $H_t$. In particular, the coordinate charts descend to the reduced space $M^{red, j}$. For example, $\phi_1$ descends to $M^{red, j}$ as 
\begin{align*}
 \phi_1^{red, j} : \C \to U_1^{red, j}:= \bigl(U_1 \cap\ J^{-1}(j) \bigr) \slash \mbS^1 \subset M^{red, j}, \quad 
 \phi^{red, j}_1(u,v):=\phi_1\left(\sqrt{2j}, \ 0, \ u, \ v\right).
\end{align*}
In case the reduced spaces are not smooth, one may need certain additional assumptions which we will introduce when needed.

Now recall some notions and results from Gullentops \cite{thesis:yannick} and Gullentops $\&$ Hohloch \cite{fiberGullandHohl} concerning hyperbolic-regular fibres of proper $\mbS^1$-systems.

\begin{definition}
If a space is homeomorphic to the product of a figure eight loop (cf.\ Figure \ref{subfig:twoLoops}) with a circle we call it a {\em $2$-stacked torus} (see Figure \ref{fig:doubleTorus}). 
Moreover, if a space is homeomorphic to the product of a curve as in Figure \ref{subfig:threeLoops} with a circle then it is said to be a {\em $3$-stacked torus} (see Figure \ref{fig:tritorus}). Analogously we define a $4$-stacked torus and so on.
\end{definition}

Intuitively, a $2$-stacked torus can be seen as two tori `glued on top of each other', a $3$-stacked torus as three tori `glued on top of each other' and so on for the $4$-stacked torus etc.


\subsection{Appearing hyperbolic-regular fibres}

Note that the symplectic quotient $M^{red, j}$ has certain features in common with the generalised bouquet $(G,S,C)$ in Theorem \ref{theo:main}: $G$ is defined as the quotient of a given hyperbolic-regular fibre w.r.t.\ the action induced by $J$. Therefore $G$ appears naturally as a subset of some reduced space $M^{red, j}$. Because of Lemma \ref{lem:red}, $G$ is in fact a hyperbolic fibre of $(M^{red, j},H^{red,j})$. The set $S$ is the quotient of the set of points with $J$-period $\pi$. Note that $S = \emptyset$ here due to Theorem \ref{the:equalperiod}.

We want to use this to deduce the topology of a hyperbolic-regular fibre of $H_t$ by looking at the level sets of $H_{t}^{red, j}$ on $M^{red, j}$.

\begin{example}
\label{ex:lemniscate}
Let $t= (t_1, t_2, t_3, t_4)= \left( \frac{1}{4},  \frac{1}{3}, \frac{1}{3}, 1 \right)$ and $j=2$.
Then there exists $h \in \R$ such that the fibre $(H_t^{red,2})^{-1}(h)$ is homeomorphic to a figure eight (see Figure \ref{subfig:levelSetDutorus}) and, as a result, the fibre $(J,H_t)^{-1}(2, h)$ is homeomorphic to a $2$-stacked torus (see Figure \ref{fig:doubleTorus}). 
\end{example}

\begin{proof}
To determine the singular points on $M^{red,2}$, we employ Corollary \ref{cor:finalsing}: For $t= (t_1, t_2, t_3, t_4)= \left( \frac{1}{4},  \frac{1}{3}, \frac{1}{3}, 1 \right)$ and $j=2$, the following polynomial is the numerator of the expression \eqref{eq:finalsing} evaluated with {\em Mathematica}: 
\begin{align*}
   0 &=  655.36 -286855 \ r^2+797691 \ r^4 - 972680 \ r^6+684747 \ r^8  \\ 
     & \quad -309212 \ r^{10}+94171.9 \ r^{12} -19783.5 \ r^{14}+2874.65 \ r^{16} \\
    & \quad - 283.91 \ r^{18}+18.1967 \ r^{20}-0.682667 \ r^{22}+0.0113778 \ r^{24}
\end{align*}
Solving this equation gives us exactly two singular points in the domain of $\phi^{red,2}_1$, namely $\xi_1:= (1.48116, 0)$ and $\xi_2:= (-1.66216, 0)$.  
Using Lemma \ref{lem:hypcondition}, we conclude that $\xi_1 \in M^{red,2}$ is a singular point of hyperbolic type of $H_t^{red,2}$ and $\xi_2 \in M^{red,2}$ is a singular point of elliptic type of $H_t^{red,2}$. 
Now set $h:= H_t^{red,2}(\xi_1)$. 
Plotting $(H^{red,2}_t)^{-1}(h)$, see Figure \ref{subfig:levelSetDutorus}, we find it to be homeomorphic to a figure eight curve, see Figure \ref{subfig:twoLoops}.
The associated generalised bouquet is given by 
$$\bigl(G := (H^{red,2}_t)^{-1}(h), \ S := \emptyset, \  C:=\left\{\xi_1\right\} \bigr)$$ 
and therefore, by Theorem \ref{theo:main}, the fibre $(J,H_t)^{-1}(2, h)$ is homeomorphic to a $2$-stacked torus. 
\end{proof}

\begin{figure}[h]
    \centering
\begin{subfigure}[t]{.48\textwidth}
\centering
   \includegraphics[scale =.35]{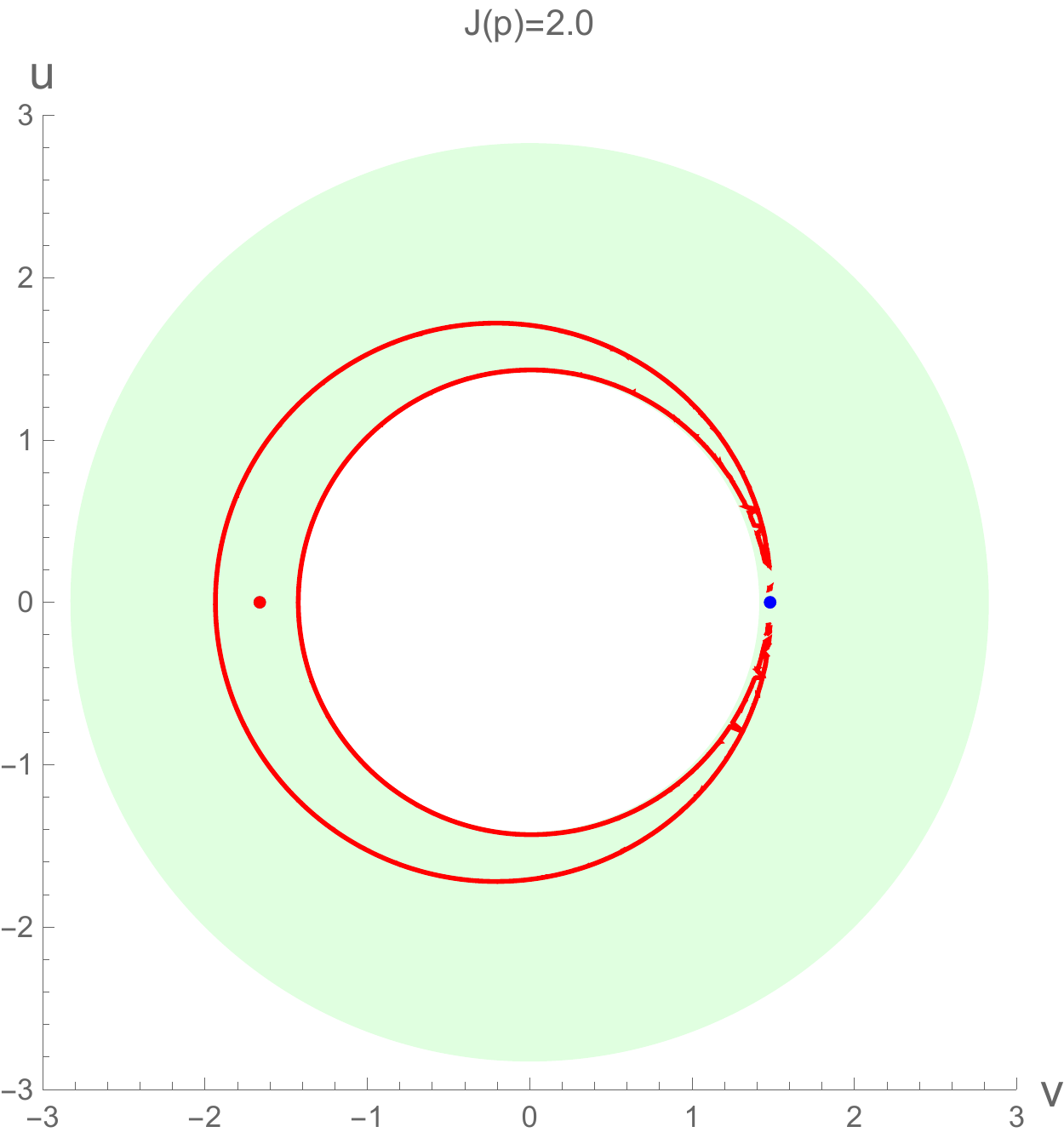}
    \caption{Hyperbolic fibre on the reduced chart $\phi_1^{red,2}$ with one elliptic and one hyperbolic singular point visible.}
    \label{subfig:levelSetDutorus}
 \end{subfigure}\quad
\begin{subfigure}[t]{.48\textwidth}
\centering
\includegraphics[scale =.85]{twoLoops.pdf}
\caption{Figure eight curve.}
 \label{subfig:twoLoops}
\end{subfigure}
\caption{$2$-stacked torus on the reduced space $M^{red, 2}$ from Example \ref{ex:lemniscate}.}
\label{fig:levelSetDutorus}
\end{figure}

\begin{example}
\label{ex:tritorus}
Let $t= (t_1, t_2, t_3, t_4)= \left( \frac{1}{2},  \frac{1}{2}, \frac{1}{3}, 1 \right)$ and $j=2$.
Then there exists $h \in \R$ such that the fibre $(H_t^{red,2})^{-1}(h)$ is homeomorphic to the curves plotted in Figure \ref{fig:levelSetTritorus} and, as a result, the fibre $(J,H_t)^{-1}(2, h)$ is homeomorphic to a $3$-stacked torus (see Figure \ref{fig:tritorus}). 
\end{example}

\begin{proof}
To determine the singular points on $M^{red,2}$, we employ Corollary \ref{cor:finalsing}: For $t= (t_1, t_2, t_3, t_4)= \left( \frac{1}{2},  \frac{1}{2}, \frac{1}{3}, 1 \right)$ and $j=2$, the following polynomial is the numerator of the expression \eqref{eq:finalsing} evaluated with {\em Mathematica}: 
\begin{align*}
0 & =  2621.44 -366594 \ r^2+966351 \ r^4- (1.12567*10^6) \ r^6 + 761534 \ r^8 \\
& \quad - 332674 \ r^{10} + 98714.3 \ r^{12} -20343.5 \ r^{14} + 2917.25 \ r^{16} - 285.733 \ r^{18} \\
& \quad + 18.2302 \ r^{20} - 0.682667 \ r^{22} + 0.0113778 \ r^{24}
\end{align*}
Solving this equation gives us exactly four singular points in the domain of $\phi^{red,2}_1$, namely 
$$
\xi_1:= (1.56842, 0) , \quad \xi_2 :=  (2.23607, 0) , \quad \xi_3 := (-2.23607,0)  ,\quad \xi_4:= (2.74592, 0) .
$$
Using Lemma \ref{lem:hypcondition}, we conclude that $\xi_1, \xi_4 \in M^{red,2}$ are singular points of hyperbolic type of $H_t^{red,2}$ and $\xi_2, \xi_3 \in M^{red,2}$ are singular points of elliptic type of $H_t^{red,2}$. 
Now set $h:= H_t^{red,2}(\xi_1) = H_t^{red,2}(\xi_4)$. 
Plotting $(H^{red,2}_t)^{-1}(h)$, see Figure \ref{subfig:levelSetTritorus}, we find it to be homeomorphic to the curve plotted in Figure \ref{subfig:threeLoops}.
The associated generalised bouquet is given by 
$$\bigl(G := (H^{red,2}_t)^{-1}(h), \ S := \emptyset, \ C:=\left\{\xi_1, \xi_4 \right\}\bigr)$$ 
and therefore, by Theorem \ref{theo:main}, the fibre $(J,H_t)^{-1}(2, h)$ is homeomorphic to a $3$-stacked torus. 
\end{proof}

\begin{figure}[h]
    \centering
\begin{subfigure}[t]{.48\textwidth}
\centering
   \includegraphics[scale =.35]{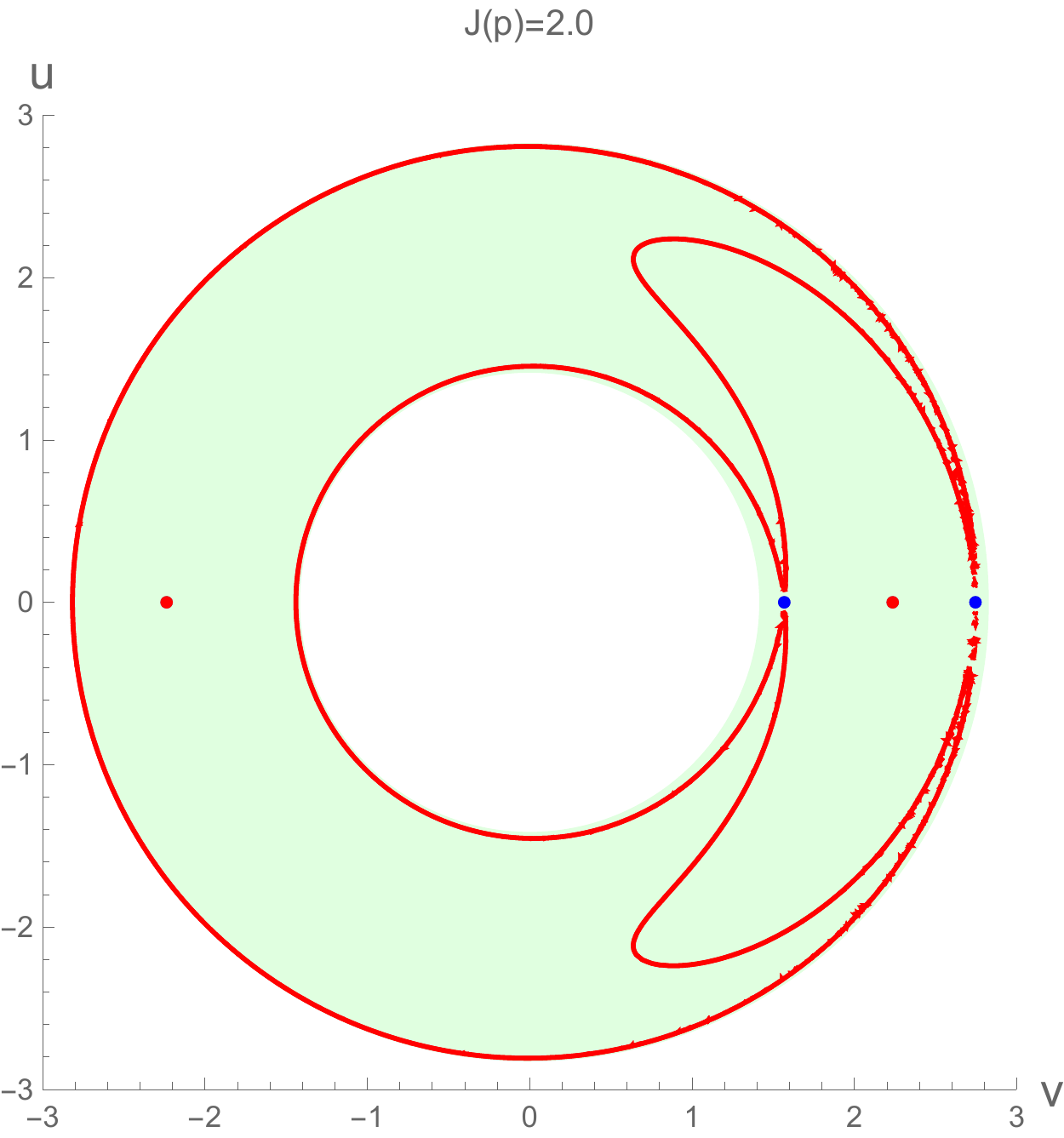}
    \caption{Hyperbolic fibre on the reduced chart $\phi_1^{red,2}$ with two hyperbolic and two elliptic singular points visible.}
    \label{subfig:levelSetTritorus}
 \end{subfigure}\quad
\begin{subfigure}[t]{.48\textwidth}
\centering
\includegraphics[scale =.8]{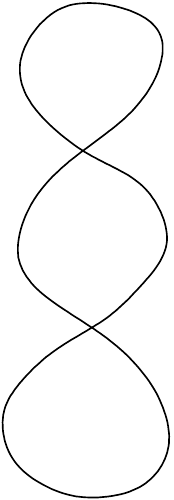}
\caption{Homeomorphic drawing of the plotted red curve in Figure \ref{subfig:levelSetTritorus}.}
 \label{subfig:threeLoops}
\end{subfigure}
\caption{$3$-stacked torus on the reduced space $M^{red, 2}$ from Example \ref{ex:tritorus}.}
\label{fig:levelSetTritorus}
\end{figure}


\subsection{Collisions of $2$-stacked torus fibres}

Example \ref{ex:lemniscate} and Example \ref{ex:tritorus} are in fact part of a larger picture in the sense that transition from one to the other happens naturally for certain values of $J$ and $t$ as we will see now. Moreover, we observe again the importance of Theorem \ref{theo:main} when we deduce the topology of the fibre from its topological shape in the reduced space.

\begin{example}
\label{ex:gridsmooth}
Consider $(J, H_t)$ for $t=\left(\frac{1}{2},\ \frac{1}{2}, \ \frac{1}{3},\ 1\right)$.
In Figure \ref{fig:Jdomains}, the domains of $\phi^{red, j}_1$ for certain values of $J$ are plotted. Figure \ref{fig:Jdomains} displays the fibres of $H^{red, j}_t$. Thus, by Theorem \ref{theo:main}, the connected components of the hyperbolic-regular fibres are $2$-stacked tori except for $j=2$ where the collision of two $2$-stacked tori creates a $3$-stacked torus, see Figure \ref{fig:Jcollisions}. Note that this is the only fibre of $\left(J,H_{\left(\frac{1}{2},\ \frac{1}{2}, \ \frac{1}{3},\ 1\right)}\right)$ that is not a $2$-stacked torus.
\end{example}

\begin{proof}
Let $t=\left(\frac{1}{2},\ \frac{1}{2}, \ \frac{1}{3},\ 1\right)$.
Figure \ref{fig:Jdomains} and Figure \ref{fig:Jcollisions} are obtained similarly as the figures used for Example \ref{ex:lemniscate} and Example \ref{ex:tritorus}. The sequence of subfigures of Figure \ref{fig:Jdomains} shows the fibres of $H^{red, j}_t$ transitioning from two elliptic singular points via one figure eight curve to two figure eight curves and back to two elliptic points. Figure \ref{fig:Jcollisions} zooms in between $j= 1.90125$ and $j= 2.10125$ and displays how the two figure eight curves collide into a curve underlying a $3$-stacked torus and then revert back into a figure eight curve.
\end{proof}

\begin{figure}[h]
\begin{center}
    \includegraphics[scale = .6]{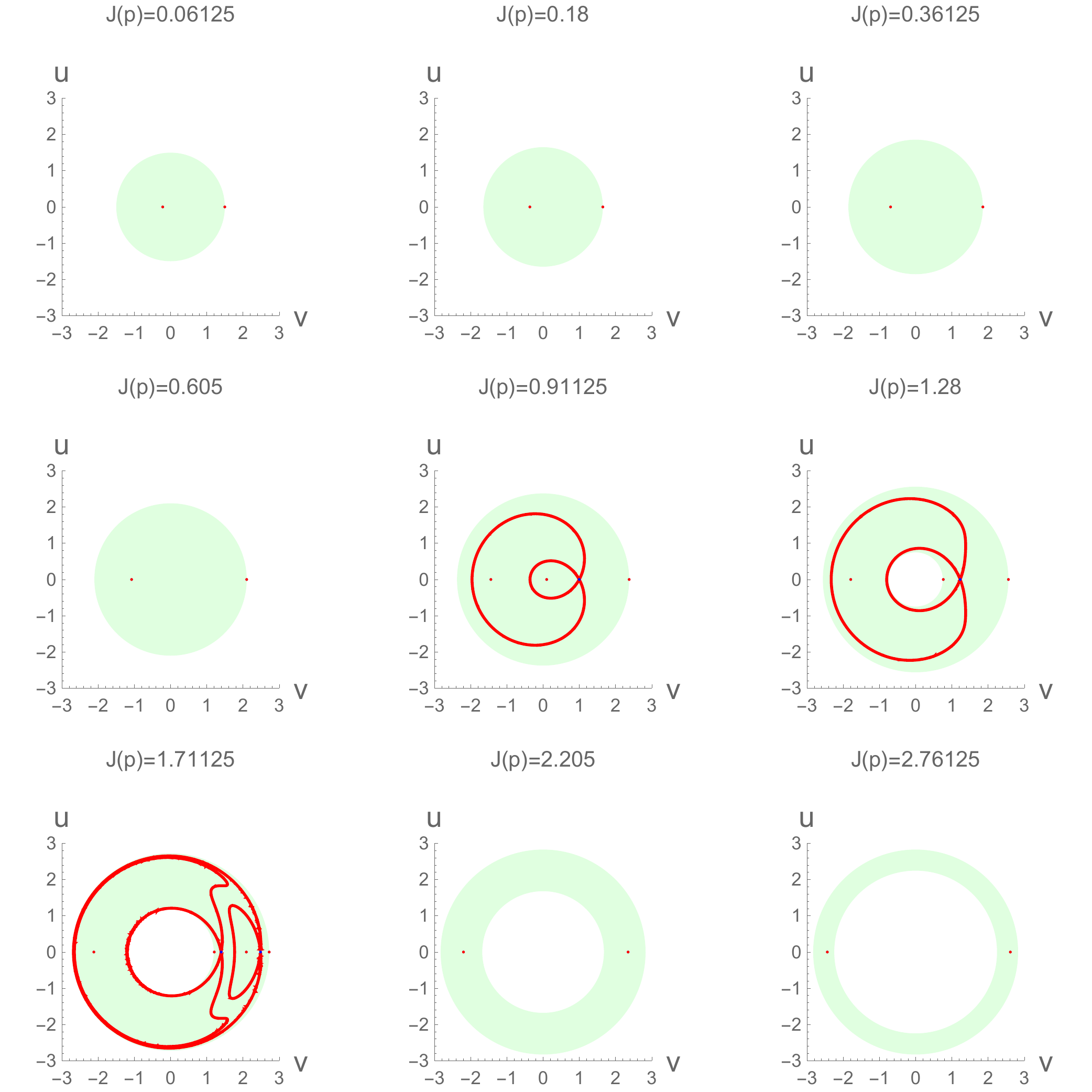}
\end{center}
\caption{Domains of $\phi_1^{red, j}$ for certain values of $j=J(p)$ where $p$ is some representative point in the domain to trace the value of $J$. 
The subfigures show the fibres of $H^{red, j}_{\left(\frac{1}{2},\ \frac{1}{2}, \ \frac{1}{3},\ 1\right)}$ transitioning from two elliptic singular points via one figure eight curve to two figure eight curves and back to two elliptic points.}
\label{fig:Jdomains}
\end{figure}

\begin{figure}[h]
\begin{center}
    \includegraphics[scale = .72]{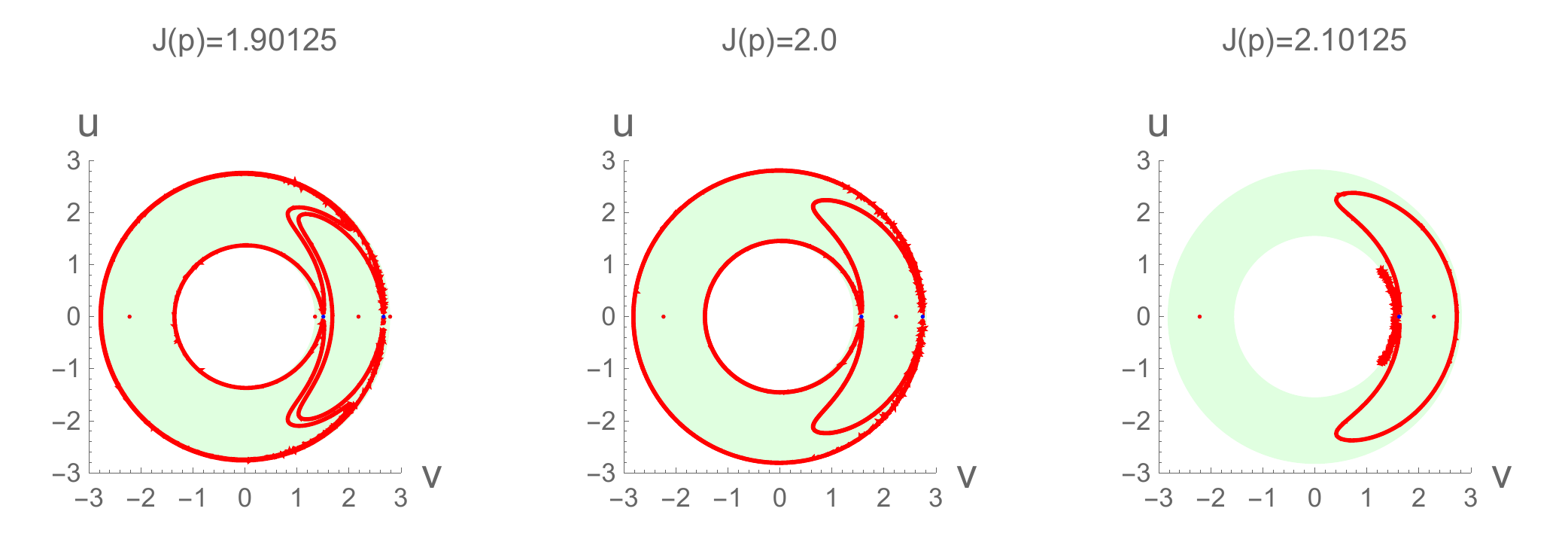}
\end{center}
\caption{Quotient of the transition from two $2$-stacked tori via one $3$-stacked torus to one $2$-stacked torus by the $\mbS^1$-action of $J$.}
\label{fig:Jcollisions}
\end{figure}

Example \ref{ex:gridsmooth} can also be observed via the bifurcation diagram: 

\begin{remark}
Each point of a blue line in Figure \ref{fig:aroundtritorus} has a $2$-stacked torus as fibre. At the intersection point of the blue lines, the fibres of the $2$-stacked tori collide into a $3$-stacked torus. 
\end{remark}

\begin{figure}[hb]
    \centering
    \includegraphics[trim={0 1.5cm 0 2.9cm},clip,scale = .85]{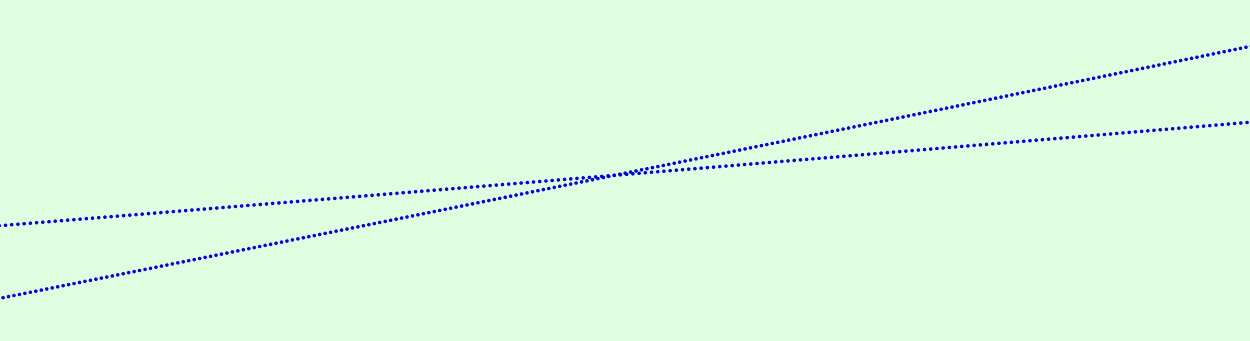}
    \caption{Bifurcation around the $3$-stacked torus.}
    \label{fig:aroundtritorus}
\end{figure}


\subsection{Upper bound}

We can also easily produce a $4$-stacked torus. It is generated by a collision of a $3$-stacked torus with a $2$-stacked torus:

\begin{example}
\label{ex:quadtorus}
Let $\tau = -12.045$ and $t= (t_1, t_2, t_3, t_4)= \left(\frac{1}{2}, \frac{\tau}{2} , \frac{\tau}{3} , \tau \right)$ and consider $\left(J, H_t \right)$ for $j=2$. Then there exists $h \in \R$ such that the fibre $(H_t^{red,2})^{-1}(h)$ is homeomorphic to the curve plotted in Figure \ref{fig:quadtorus} and, as a result, the fibre $(J,H_t)^{-1}(2, h)$ is homeomorphic to a $4$-stacked torus. 
\end{example}

\begin{proof}
Let $\tau = -12.045$ and $t= (t_1, t_2, t_3, t_4)= \left(\frac{1}{2}, \frac{\tau}{2} , \frac{\tau}{3} , \tau \right)$. 
To determine the singular points on $M^{red,2}$, we use Corollary \ref{cor:finalsing} and obtain for the numerator of the expression \eqref{eq:finalsing} evaluated with {\em Mathematica}: 
\begin{align*}
     0 & =  2621.44 - (5.11466*10^7) \  r^2 + (1.35995*10^8) \ r^4 - (1.59003*10^8) \ r^6 \\ 
     & \quad + (1.07988*10^8) \ r^8 - (4.73907*10^7) \ r^{10} + (1.41302*10^7) \  r^{12} \\
     & \quad - (2.92511*10^6) \ r^{14} + 421026 \ r^{16} - 41351 \ r^{18} + 2642.8 \ r^{20} \\
     & \quad - 99.0427 \ r^{22} + 1.65071 \ r^{24}
 \end{align*}
Solving this equation gives us exactly six singular points in the domain of $\phi^{red,2}_1$, namely 
\begin{align*}
 & \xi_1  := ( -1.78207, 0)  , && \xi_2 := (2.23607, 0) , && \xi_3 := (-2.61232, 0), \\
 & \xi_4  := (-2.23607, 0)  , && \xi_5 := (1.83516, 0) , && \xi_6 := (2.5753, 0).
\end{align*}
Using Lemma \ref{lem:hypcondition}, we conclude that $\xi_1, \xi_2, \xi_3 \in M^{red,2}$ are singular points of hyperbolic type of $H_t^{red,2}$ and $\xi_4, \xi_5, \xi_6 \in M^{red,2}$ are singular points of elliptic type of $H_t^{red,2}$. 
Now set $h:= H_t^{red,2}(\xi_1) = H_t^{red,2}(\xi_2)= H_t^{red,2}(\xi_3) = -14.7267$. 
Plotting $(H^{red,2}_t)^{-1}(h)$ we find it to be the curve plotted in Figure \ref{fig:quadtorus}.
The associated generalised bouquet is given by 
$$\left(G := \bigl( H^{red,2}_t\bigr)^{-1}(h), \ S := \emptyset, \ C:=\left\{\xi_1, \xi_2, \xi_3 \right\} \right)$$ 
and therefore, by Theorem \ref{theo:main}, the fibre $(J,H_t)^{-1}(2, h)$ is homeomorphic to a $4$-stacked torus. 
\end{proof}

\begin{figure}[h]
    \centering
    \includegraphics[scale=.7]{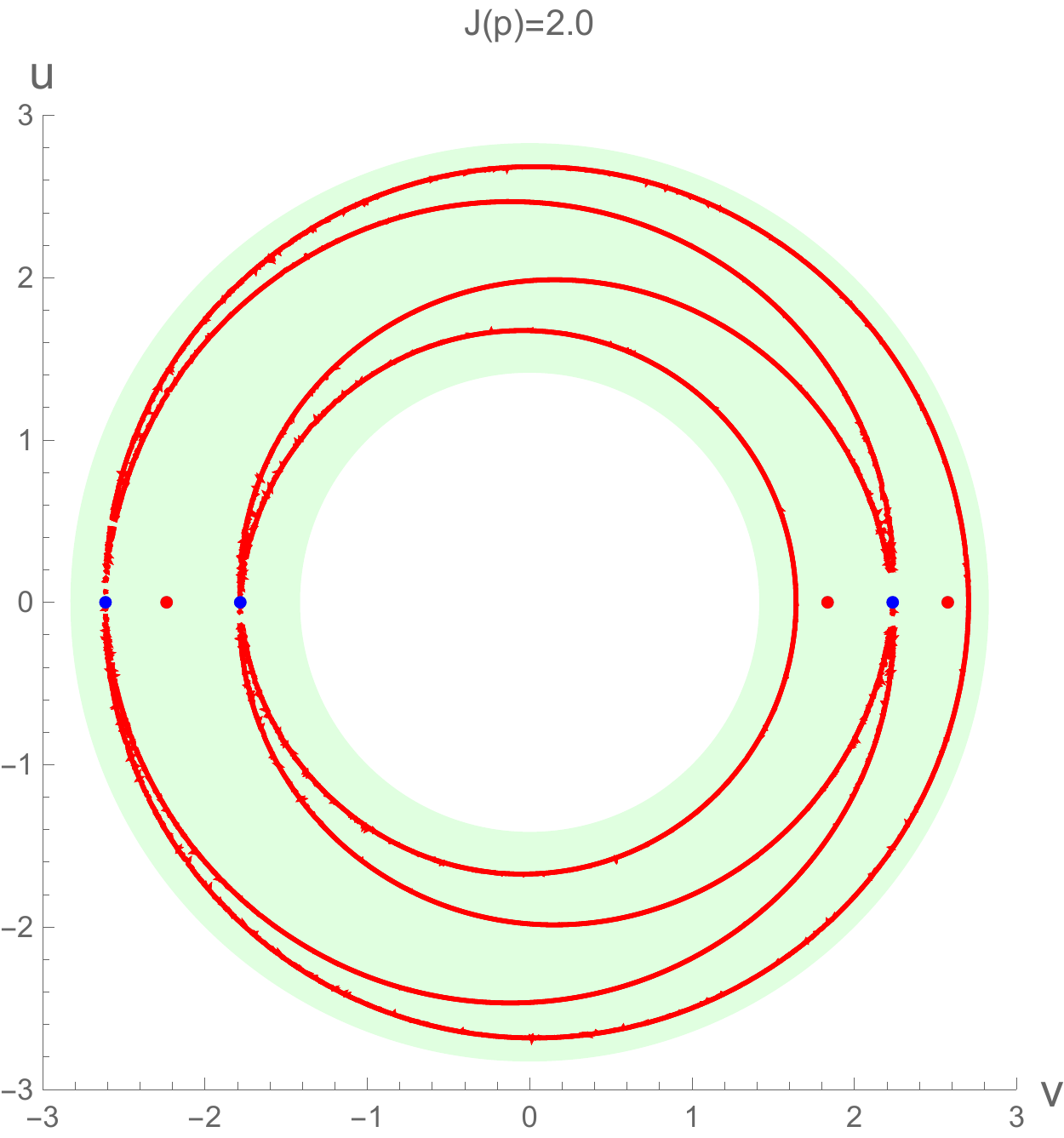}
    \caption{Appearance of a $4$-stacked torus as explained in Example \ref{ex:quadtorus} where $t= (t_1, t_2, t_3, t_4)= \left(\frac{1}{2}, \frac{\tau}{2} , \frac{\tau}{3} , \tau \right)$ with $\tau = -12.045$.}
    \label{fig:quadtorus}
\end{figure}

The natural question now is how far we can push this --- can we get a $5$-stacked torus, a $6$-stacked torus and so on? To answer this questions, we need, among others, the following result: 

\begin{proposition}[{Gullentops \cite[Theorem 3.16]{thesis:yannick}}]
 \label{theo:Hittingbranches}
Let $(N,\om)$ a compact symplectic 2-dimensional manifold and $H: N \to \R$ a smooth Hamiltonian function. 
For every point $x$ in a hyperbolic fibre, $\lim_{t \rightarrow \pm \infty}\Phi^H_t(x)$ is a hyperbolic fixpoint.
\end{proposition}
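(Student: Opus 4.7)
The plan is to exploit the fact that, on a two-dimensional symplectic manifold, the Hamiltonian flow preserves $H$ and its level sets are at most one-dimensional. Let $\mathbb{F}$ denote the connected component of $H^{-1}(H(x))$ containing $x$; by hypothesis $\mathbb{F}$ contains at least one hyperbolic (saddle) critical point of $H$. Since $\{H, H\}=0$, the orbit $\Phi^H_t(x)$ stays in $\mathbb{F}$, and since $N$ is compact, $\mathbb{F}$ is compact. Away from the finite set $\mathrm{Crit}(H)\cap\mathbb{F}$, the level set $\mathbb{F}$ is a smooth $1$-submanifold on which $X^H$ is tangent and non-vanishing, so I will decompose $\mathbb{F}$ into critical points plus regular arcs (connected components of $\mathbb{F}\setminus\mathrm{Crit}(H)$) and analyse the flow along one such arc.

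The first step is to produce the limit. If $x\in\mathrm{Crit}(H)$ the orbit is constant and there is nothing to show, so assume $x$ lies in a regular arc $A$. As a connected $1$-manifold with a non-vanishing vector field, $A$ is either an open interval or a circle. If $A$ were a circle, it would be compact and thus closed in $\mathbb{F}$; combined with being open in $\mathbb{F}$ (as a component of the open subset $\mathbb{F}\setminus\mathrm{Crit}(H)$), this would make $A$ clopen in the connected space $\mathbb{F}$, forcing $\mathbb{F}=A$, contradicting the presence of a hyperbolic critical point in $\mathbb{F}$. Hence $A$ is an open interval, and $X^H$ parametrises $A$ monotonically. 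Monotonicity together with compactness of $\mathbb{F}$ forces $\Phi^H_t(x)$ to converge as $t\to\pm\infty$ to points $p_\pm$ in the boundary of $A$ within $\mathbb{F}$; by maximality of $A$ as a regular component, $p_\pm\in\mathrm{Crit}(H)\cap\mathbb{F}$.

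The second step is to identify $p_\pm$ as hyperbolic. Under the (implicit) non-degeneracy of the critical points, an elliptic critical point $q$ is by the Morse lemma a strict local extremum of $H$, hence isolated in $H^{-1}(H(q))$; then $\{q\}$ would be a connected component of $\mathbb{F}$, which is impossible because $\mathbb{F}$ is connected and contains a hyperbolic point. Thus $p_\pm$ is hyperbolic, completing the proof. I expect the main obstacle to be the clopen argument ruling out the circle case for $A$: it is intuitively clear that a closed periodic orbit on a Hamiltonian surface cannot share its level set with a saddle, but the cleanest formulation goes precisely through the simultaneous openness (as a regular component) and closedness (as a compact circle) of $A$ inside $\mathbb{F}$. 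Once the circle case is excluded, the monotone $1$-dimensional dynamics on $A$ and the isolation of elliptic critical points are entirely elementary.
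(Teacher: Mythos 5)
The paper does not actually prove this proposition; it is quoted from the first author's thesis, so there is no in-paper argument to compare against. Judged on its own, your proof is the standard argument and is essentially sound: the leaf is compact and flow-invariant, its regular part is an embedded $1$-manifold, a regular component cannot be a circle (clopen in the connected leaf, which already contains a critical point), so it is an arc along which the flow is monotone, and the forward/backward limits are critical points of $H$ on the leaf, which cannot be elliptic because non-degenerate extrema are isolated in their level set. This matches exactly how the result is used in Step 2 and Step 6 of the proof of Theorem \ref{th:maxfibre}.

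Two places deserve tightening. First, your reading of the hypothesis is doing real work: you replace ``hyperbolic fibre'' by ``the connected component of $H^{-1}(H(x))$ containing $x$ contains a hyperbolic point'' and you assume all critical points on it are non-degenerate. Both are needed: if the fibre (as a full level set) merely contains a hyperbolic point in some other component, $x$ could lie on a regular circle component and the limit would not exist; and if degenerate singular points (e.g.\ cusps as in Figure \ref{fig:parabolicExample}) were allowed on the leaf, your argument only shows the limit is a singular point, not a hyperbolic one. So the conclusion really rests on the thesis' definition of hyperbolic fibre (all singular points non-degenerate hyperbolic), which you flag only parenthetically. Second, the convergence step is slightly short: monotonicity of the coordinate on the arc $A$ only gives that the orbit leaves every compact subset of $A$; to get an actual limit point in $N$ you should add that the $\omega$-limit set is nonempty, compact and connected (by compactness of the leaf), is disjoint from the regular part, hence is contained in the finite set $\Crit(H)\cap\mathbb{F}$, and therefore is a single critical point. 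With these two points made explicit the proof is complete.
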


Now we determine an upper bound $k$ for $k$-stacked tori in our family $(J, H_t)$ of proper $\mbS^1$-systems.

\begin{theorem}
\label{th:maxfibre}
Let $t_1 \neq 0$. Then the number of hyperbolic points in a fibre of $\left( M^{red,j},H_t^{red,j}\right)$ cannot exceed twelve. In particular, there are no $k$-stacked tori possible for $k>13$.  
\end{theorem}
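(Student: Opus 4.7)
The plan is to bound the total number of rank-one critical points of the reduced Hamiltonian $H_t^{red,j}\colon M^{red,j}\to\R$ directly from the polynomial criterion of Corollary \ref{cor:finalsing}, and then to translate this bound into the claimed cap on the stacking number $k$ via Theorem \ref{theo:main}.

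First, by Lemma \ref{lem:red} combined with Lemma \ref{lem:dJnotZero}, a rank-one hyperbolic point of $(J,H_t)$ sitting above a value $j$ with $dJ\neq 0$ corresponds to a saddle-type critical point of $H_t^{red,j}$, so the statement reduces to bounding the number of critical points of $H_t^{red,j}$ on a single level set. By Proposition \ref{prop:dj}, any such critical point lying in the chart $U_1^{red,j}$ is forced onto the one-dimensional slice $\{v=0\}$, on which $r_2$ ranges over $\R$ with $r_1=\sqrt{2j}$ fixed.

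Second, on the slice $\{v=0\}$ the criterion of Corollary \ref{cor:finalsing}, after multiplying through by the strictly positive polynomial
\[ R(r_1,r_2) := (8-r_2^2)(2+2r_1^2-r_2^2)(6-2r_1^2+r_2^2)(4+r_1^2-r_2^2)(2-r_1^2+r_2^2) \]
arising from $\sqrt{R}$ in $\partial_{r_2}\overline{\Omega}$, becomes a polynomial equation $P(r_2) = 10000\,R\,(\partial_{r_2}\Gamma_t)^2 - t_1^2\,(2R + r_2\,\partial_{r_2}R)^2 = 0$. From the explicit expression for $\Gamma_t$ in Proposition \ref{prop:dj}, both $\Gamma_t(r_1,\cdot)$ and $R(r_1,\cdot)$ involve only even powers of $r_2$; hence $\partial_{r_2}\Gamma_t$ and $\partial_{r_2}R$ are odd in $r_2$, $r_2\,\partial_{r_2}R$ is even, and $P(r_2)$ contains only even powers of $r_2$. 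Writing $P(r_2)=\tilde{P}(r_2^2)$, a leading-coefficient inspection gives $\deg\tilde{P}=12$ (in full agreement with the explicit degree-$24$ polynomials in $r$ displayed in Examples \ref{ex:lemniscate}, \ref{ex:tritorus}, \ref{ex:quadtorus}), so $\tilde{P}$ has at most twelve positive real roots counted with multiplicity.

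Third, introducing the two factors $\Phi_{\pm}(r_2):=\partial_{r_2}\Gamma_t(r_1,r_2)\pm t_1\,\partial_{r_2}\overline{\Omega}(r_1,r_2)$, one has the factorisation $P(r_2)=10000\,R(r_1,r_2)\,\Phi_{+}(r_2)\,\Phi_{-}(r_2)$ on the interior of $D_1$. A positive root of $\Phi_{+}$ contributes the critical point $(r_2,0)$ (the $\theta_2=0$ case of Corollary \ref{cor:finalsing}), while a positive root of $\Phi_{-}$ contributes $(-r_2,0)$ (the $\theta_2=\pi$ case). Since $R>0$ on the open domain, each distinct positive root of $\Phi_{+}$ or $\Phi_{-}$ is a positive root of $P$ with at least the same multiplicity, so the sum of the numbers of distinct positive roots of $\Phi_{+}$ and $\Phi_{-}$ is bounded by the total multiplicity of positive roots of $P$, i.e.\ by $\deg\tilde{P}=12$. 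Consequently $H_t^{red,j}$ has at most twelve critical points in $U_1^{red,j}$, and therefore at most twelve on any single fibre; points of $M^{red,j}$ outside $U_1^{red,j}$ project to edges of the octagon and contribute only the rank-zero critical points already classified by Theorem \ref{the:invariablesingpoint} and Theorem \ref{the:zerocor}, so they do not affect the count. Finally, by Theorem \ref{theo:main}, a $k$-stacked torus arises as a hyperbolic-regular leaf whose associated undirected generalised bouquet has exactly $k-1$ crossings (elements of $C$), each projecting to a distinct saddle of $H_t^{red,j}$ on the corresponding fibre, forcing $k-1\leq 12$ and hence $k\leq 13$.

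The main obstacle is the polynomial bookkeeping of Step~2: one must check that the leading coefficient of $\tilde{P}$ in $r_2^2$ does not identically vanish for generic $(t_1,\dots,t_4)$ with $t_1\neq 0$ so that the degree is exactly $12$ (an explicit leading-term computation gives $-256\,t_3^2\,r_2^{24}$, with the bound degrading but never exceeding $12$ when $t_3=0$). A secondary subtlety is ensuring that no rank-one hyperbolic critical points of $H_t^{red,j}$ escape the chart $U_1^{red,j}$; this can be handled either by a symmetric computation in the remaining charts $U_{\nu}$ for $\nu\in\{4,5,8\}$, or by observing that $M^{red,j}\setminus U_1^{red,j}$ is one-dimensional and carries only rank-zero points by Theorem \ref{the:zerocor}.
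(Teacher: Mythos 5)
Your route is genuinely different from the paper's, and the core of it is sound: you sharpen the root count of the degree-$24$ even polynomial to at most twelve admissible radii (using evenness, positivity of $r_2$, the factorisation $P=10000\,R\,\Phi_+\Phi_-$ with $R>0$ on the domain, and multiplicities to handle radii where both $\Phi_+$ and $\Phi_-$ vanish), and thereby bound \emph{all} critical points of $H_t^{red,j}$ inside the chart $U_1^{red,j}$ by twelve. The paper does something weaker and then compensates: it only records that the chart contains at most $24$ singular points, adds the one or two points missed by the chart to get a total of $26$, and then runs a graph-theoretic argument on the sphere $M^{red,j}$ (every complementary face of the level set contains a critical point, Euler's formula gives $\abs{faces}=\abs{\Hyp}+2$, whence $26\geq 2\abs{\Hyp}+2$) to land on $\abs{\Hyp}\leq 12$. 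Your argument, if completed, would replace that Euler-characteristic step entirely; your use of Theorem \ref{theo:main} for the implication ``$k$-stacked torus $\Rightarrow k-1$ hyperbolic points $\Rightarrow k\leq 13$'' is also fine, and is much lighter than the paper's Steps 3--9, which establish the stronger structural statement that the fibres are loop chains.

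The genuine gap is your treatment of the finitely many points of $M^{red,j}$ not covered by $U_1^{red,j}$ (and, in the same vein, the chart origin when $j<1$, where the polar-coordinate criterion of Corollary \ref{cor:finalsing} degenerates). You dismiss them by asserting they ``carry only rank-zero critical points by Theorem \ref{the:zerocor}'', but that theorem says nothing of the sort: rank-zero points occur only over $J\in\{0,1,2,3\}$, so for a generic level $j$ the one or two missed points (where $z_3=0$, $z_7=0$, $z_8=0$, etc.) are \emph{not} rank-zero points, and nothing you cite excludes them from being rank-one hyperbolic-regular points of $(J,H_t)$, i.e.\ hyperbolic points of $H_t^{red,j}$ sitting on the very fibre you are counting; even one such point would push your bound to $13$ and break the statement. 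Your fallback of ``a symmetric computation in the charts $U_\nu$, $\nu\in\{4,5,8\}$'' also targets the wrong charts: those are where $z_1$ or $z_5$ may vanish (levels $j\in\{0,3\}$), while the missed points at intermediate $j$ live in $U_2,U_3,U_6,U_7$. The gap is closable --- either by showing directly that for $t_1\neq 0$ the $\gamma_1$-term is linear in the vanishing coordinate, so $dH_t^{red,j}\neq 0$ at these points (they are simply not critical away from $j\in\{1,2\}$, and at $j\in\{1,2\}$ they are the invariant rank-zero points of Theorem \ref{the:invariablesingpoint}), or by retreating to the paper's face-counting argument, which is deliberately insensitive to the type of the missed points --- but as written this step is unjustified and incorrectly sourced.
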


\begin{proof}
{\em Step 1: Obtaining of an upper bound.} Every hyperbolic point is a solution of equation \eqref{eq:finalsing} which, in our situation, takes the form
\begin{align*}
0  =  f_1(r_1, r_2, t)+ \frac{f_2(r_1, r_2, t) }{f_3(r_1, r_2, t)}
\end{align*}
where $r_1= \sqrt{2j}$, and thus $j = \frac{r_1^2}{2}$, and $f_1$, $f_2$ and $f_3$ are polynomials defined on $D_1$ (see Figure \ref{fig:imp1domain} and Equation \eqref{eq:polarcoordinates}) with values in $\R$ given by
\begin{align*}
    f_1(r_1,r_2, t) & := r_2^2 \bigl(-25 + 50 t_1 + 2 (-6 + r_1^2)^2 (-6 + 2 r_1^2 - r_2^2) t_2 + 96 t_3 - 
   152 t_3 r_1^2  + 4 t_3 r_1^4  +\\
   & \quad 12 t_3 r_1^6 + 88 t_3 r_2^2 +  16 t_3 r_1^2 r_2^2 - 26 t_3 r_1^4 r_2^2 - 12 t_3 r_2^4 + 18 t_3 r_1^2 r_2^4  -  4t_3 r_2^6 + 144 t_4 \\
   & \quad- 12t_4 r_1^2  - 8 t_4 r_1^4 + t_4 r_1^6  - 
   36 t_4 r_2^2 + 12 t_4  r_1^2 r_2^2 -  t_4 r_1^4 r_2^2\bigr)^2, \\
   f_2(r_1,r_2, t) & := (-384 + 160 r_2^2 + 324 r_2^4 - 88 r_2^6 - 15 r_2^8 + 3 r_2^{10} + 
   4 r_1^8 (-4 + r_2^2)  + r_1^6 (96 r_2^2 - 18 r_2^4) \\
   & \quad+ 2 r_1^4 (120 - 46 r_2^2 - 75 r_2^4 + 13 r_2^6) +  r_1^2 (-160 - 600 r_2^2 + 192 r_2^4 + 84 r_2^6 - 15 r_2^8))^2 t_1^2, \\
   f_3(r_1,r_2, t) & := (-8 + r_2^2) (-8 + r_1^4 - 2 r_2^2 + r_2^4 - 2 r_1^2 (-1 + r_2^2)) (-12 +  4 r_1^4 + 4 r_2^2 + r_2^4 - 4 r_1^2 (2 + r_2^2)).
\end{align*}
As in Examples \ref{ex:lemniscate}, \ref{ex:tritorus}, \ref{ex:quadtorus} with associated Figures \ref{fig:levelSetDutorus}, \ref{fig:levelSetTritorus}, \ref{fig:quadtorus} it is enough to work on the reduced space $M^{red, j}$. Since $r_1= \sqrt{2j}$ where $j$ is the level of $J$, we are only interested in the properties concerning $r_2$. 
The highest order term in $r_2$ of the numerator of the rational function $f_1(r_1, r_2, t)+ \frac{f_2(r_1, r_2, t) }{f_3(r_1, r_2, t)}$ is $16 t_3^2 r_2^{24} $ which is of degree 24 in the variable $r_2$.

Thus the expression $f_1(r_1, r_2, t)+ \frac{f_2(r_1, r_2, t) }{f_3(r_1, r_2, t)}$ can maximally have 24 zeros in the variable $r_2$ for every value of $r_1$. Therefore $U^{red, j}_1$ can maximally contain 24 singular points.
The chart $U^{red,j}_1$ covers all of $M^{red,j}$ apart from a finite number of points: for $j \leq 1$, the chart misses exactly one point, and, for $j > 1$, the chart misses exactly two points. Therefore $M^{red,j}$ contains maximally 26 singular points.

{\em Step 2: Determining the number of hyperbolic points.} 
Let $(J, H_t)^{-1}(j, h)$ be a hyperbolic-regular fibre and assume that it is connected (otherwise see {\em Step 11}).
The connected components of $ M^{red,j} \setminus \bigl(H^{red,j}_t\bigr)^{-1}  (h)$ are open in $ M^{red,j}$ and are referred to as {\em faces}. We consider $\bigl(H^{red,j}_t \bigr)^{-1}    (h)$ as graph where the vertices are given by the hyperbolic singular points $\Hyp \left(H_t^{red,j}\right)$ and the edges by the connected components of $\bigl( H^{red,j}_t \bigr)^{-1} (h) \setminus \Hyp \left(H_t^{red,j}\right)$.

Now we will show that each face contains at least one singular point of $H_t^{red,j}$. 
Let $\mcC$ be a face and $\overline{\mcC}$ its closure. Since $M^{red,j}$ is homeomorphic to a 2-sphere $\overline{\mcC}$ is compact. Therefore $H_t^{red,j}$ attains a maximum and a minimum on $\overline{\mcC}$. 
By definition, $H_t^{red,j}$ is constant on $\bigl(H^{red,j}_t \bigr)^{-1}(h)$. Since the boundary $\overline{\mcC} \setminus \mcC$ is a subset of $\bigl(H^{red,j}_t \bigr)^{-1}(h)$ the function $H_t^{red,j}$ is either constant on $\overline{\mcC} $ or has at least one extremum in $\mcC$. If $H^{red,j}_t$ is smooth in the extremum then the extremum is attained in a singular point of $H^{red, j}_t$. The only points where $H^{red,j}_t$ is not smooth are the invariant singular points (see Theorem \ref{the:invariablesingpoint}). Therefore the face $\mcC$ must contain at least one singular point.

Euler's formula for connected graphs with finite number of faces, edges, and vertices in $\R^2$ is given by
\begin{equation}
 \label{eq:eulerformula}
 \abs{faces} - \abs{edges} + \abs{vertices} = 2.
\end{equation}
By definition, the vertices of the graph induced by $\bigl(H^{red,j}_t \bigr)^{-1}(h)$ are the hyperbolic singular points. Since a hyperbolic point $p$ in the plane has exactly one stable and one unstable manifold a small neighbourhood of $p$ in $\bigl(H^{red,j}_t \bigr)^{-1}(h) \setminus \{p\}$ has 4 connected components of which two come from the stable manifold (`stable branches') and two from the unstable one (`unstable branches'). These are the ends of the edges meeting in $p$. By Theorem \ref{theo:Hittingbranches} the two ends of an edge are always branches. Therefore 
$$
\abs{edges} = \frac{4 \ \abs{vertices}}{2} = 2 \ \abs{vertices}.
$$
By definition of the graph, we obtain 
$$
2 \ \left| \Hyp \left(H_t^{red,j}\right) \right| = \abs{edges}.
$$
Together with Euler's formula \eqref{eq:eulerformula}, this leads to
$$
2 = \abs{faces} - 2 \ \left| \Hyp \left(H_t^{red,j}\right) \right| + \left| \Hyp \left(H_t^{red,j}\right) \right| = \abs{faces} -  \ \left| \Hyp \left(H_t^{red,j}\right) \right|
$$
and therefore 
\begin{equation}
\label{eq:numberFaces}
\abs{faces} = \left| \Hyp \left(H_t^{red,j}\right) \right| + 2.
\end{equation}
Since each face contains at least one singular point and since the total of singular points is maximally 26, we deduce
$$
  26 \geq \abs{faces} +  \left| \Hyp \left(H_t^{red,j}\right) \right| = 2 \ \left| \Hyp \left(H_t^{red,j}\right) \right| + 2.
$$
which results in 
$$
12 \geq \left| \Hyp \left(H_t^{red,j}\right) \right|.
$$

{\em Step 3: Location of singular points.} Recall that $M^{red, j}$ is diffeomorphic to a 2-sphere for $j \neq \{1, 2\}$ and homeomorphic to a 2-sphere for $j \in \{1, 2\}$. 
The chart $(U_1^{red, j}, \phi^{red, j}_1)$ covers all but one point of $M^{red, j}$ if $j< 1$ and all but two points if $j \geq 1$. By Proposition \ref{prop:dj}, all singular points $(x + iy, u + iv) $ in the chart $(U_1,\phi_1)$ satisfy $v=0$. This property descends to the reduced chart $\bigl(U_1^{red, j},  \phi^{red, j}_1\bigr)$. Therefore all these points lie on a line $L \subset \C$, more precisely in $L \cap\ \left(\phi_1^{red, j}\right) ^{-1}\bigl( U_1^{red, j} \bigr)$. The image $\phi_1(L)$ consists of a circle minus one or two points in $M^{red, j}$. We denote by $\mcS_L$ the closure of $\phi_1(L)$, i.e., it includes these missing points. Note that the points missing in $\phi_1(L)$ coincide with the points missed by the chart $\phi_1$. Thus any singular point on $M^{red, j}$ must lie on $\mcS_L$. 

{\em Step 4: Whereabouts of the faces.} Recall that each face contains a singular point. Thus each face has to intersect $\mcS_L$. 

{\em Step 5: Reflection along $\mcS_L$.} From the definition of $H_t$ and $H_t^{red,j}$, we obtain that $H^{red,j}_t(\phi_1^{red,j}(u+iv)) = H^{red,j}_t(\phi_1^{red,j}(u-iv))$, meaning, the function $ H^{red,j}_t \circ \phi_1^{red,j}$ is invariant under reflection along the line $L$ containing the singular points. Thus $ H^{red,j}_t$ is invariant under reflection along the circle $\mcS_L$ containing the singular points. Therefore the `graph' $\bigl(H^{red,j}_t \bigr)^{-1} (h)$ is invariant under this reflection.

{\em Step 6: Transverse intersections with $\mcS_L$.} 
A Hamiltonian solution $\ga$ of $H^{red,j}_t$ is said to be {\em heteroclinic} from the hyperbolic point $p^-$ to the hyperbolic point $p^+$ if $\lim_{s \to \pm \infty} \ga(s) = p^\pm$. A Hamiltonian solution is said to be {\em homoclinic} if it is heteroclinic with $p^- = p^+$.

Endow the line $L$ with the ordering induced by the ascending real coordinate and induce a (circular) ordering on $\mcS_L$.

Let $ \pti \in \mcS_L \cap\ \bigl(H^{red,j}_t \bigr)^{-1} (h)$ and let $V$ be a sufficiently small neighbourhood of $\pti$ in $\bigl(H^{red,j}_t \bigr)^{-1} (h)$. 
The number of connected components of $V \setminus\ \{\pti\}$ must be even since $\pti$ is either hyperbolic (causing four components) or regular (causing two components).

We now show that these connected components of $V \setminus\ \{\pti\}$ can never lie on $\mcS_L$. We argue by contradiction: Assume that $\pti \in \mcS_L \cap\ \bigl(H^{red,j}_t \bigr)^{-1} (h)$ where two of the components of $V \setminus\ \{ p'\} $ are contained in $\mcS_L$. 
If $\pti$ is regular then the flow line $s \mapsto \Phi_s^{H^{red,j}_t}(\pti)$ must stay in $\mcS_L$ for all $s \in \R$ due to the invariance of $\bigl(H^{red,j}_t \bigr)^{-1} (h)$ under reflection (otherwise the solution would have to split into two branches in contradiction to uniqueness of ODEs). Since $\dim_\R (\mcS_L)=1$ it is either heteroclinic or homoclinic to some hyperbolic points. 

In particular, $s \mapsto \Phi_s^{H^{red,j}_t}(\pti)$ lies on one branch of the 1-dimensional stable and unstable manifolds of these hyperbolic points. These branches therefore have to lie on $\mcS_L$. Since $\bigl(H^{red,j}_t \bigr)^{-1} (h)$ is invariant under the above mentioned reflection, the other branch has to lie on $\mcS_L$, too. Therefore the whole stable or unstable manifolds of the hyperbolic points adjacent to $\pti$ on the circle lie on $\mcS_L$. Iterating this argument, we obtain that $\mcS_L$ must be a subset of $\bigl(H^{red,j}_t \bigr)^{-1} (h)$. Therefore no face intersects $\mcS_L$. This leads to a contradiction with the result of {\em Step 4}.

Therefore all intersections of $\bigl(H^{red,j}_t \bigr)^{-1} (h)$ with $\mcS_L$ are transverse in the following sense: if the intersection point is hyperbolic, the stable and unstable manifolds both intersect the circle transversely. If the intersection point is regular then, due to invariance under reflection, the intersection is in fact perpendicular and the intersection point must lie on a homoclinic orbit.   

{\em Step 7: Existence of a homoclinic orbit.}
We want to show that there exists at least one homoclinic orbit. We argue by contradiction: assume that there is no homoclinic orbit. This means in particular that the only intersections of $\bigl(H^{red,j}_t \bigr)^{-1} (h)$ with $\mcS_L$ are precisely the hyperbolic points. These cut $\mcS_L$ into as many segments as there are hyperbolic points. Moreover, each segment between two adjacent hyperbolic points lies completely in a face. By equation \eqref{eq:numberFaces}, we have two more faces than hyperbolic points. Therefore there exist at least two faces which do not intersect the circle which contradicts the result of {\em Step 4}. 

{\em Step 8: The generalised bouquet.}
By Theorem \ref{theo:Hittingbranches}, every flow line begins and ends at a hyperbolic point, i.e., all flow lines are heteroclinic or homoclinic. Consider a hyperbolic point with a homoclinic orbit. Then either the remaining stable and unstable branch are connected by another homoclinic orbit or not. In the latter case, both are connected via heteroclinic orbits to hyperbolic points. Due to invariance under reflection, these hyperbolic points must coincide. Now repeat this argument for this hyperbolic point. Since there are only finitely many hyperbolic points this procedure terminates after finitely many steps with an homoclinic orbit.
Intuitively, $\bigl(H^{red,j}_t \bigr)^{-1} (h)$ thus looks like a `loop chain' as in Figure \ref {subfig:threeLoops}.

{\em Step 9: The shape of the fibre.}
By Theorem \ref{theo:main} and Theorem \ref{the:equalperiod}, we obtain the fibre $(J, H_t)^{-1}(j, h)$ by taking the product of the `loop chain' $\bigl(H^{red,j}_t \bigr)^{-1} (h)$ with $\mbS^1$. 

A `loop chain' containing precisely one hyperbolic point leads to a $2$-stacked torus. More generally, a `loop chain' with $k$ hyperbolic points leads to a $(k+1)$-stacked torus, see Figure \ref{fig:hypRegFibres} for $2$-stacked and $3$-stacked ones.

{\em Step 10: Working on $\phi_1$ is sufficient.}
Note that we need not look at other charts than $\phi_1$ since we accounted for all points missed by the chart $\phi_1$ in the steps above. 

{\em Step 11: Case of graphs with several connected components.}
Concerning {\em Step 2}, if a graph has $N$ connected components, Eulers formula becomes  
$$\abs{faces} - \abs{edges} + \abs{vertices} = N+1,$$
see Wilson \cite[Corollary 13.3]{wilson10}. Thus our estimates continue to hold true since $2 \leq N+1$ for $ 1 \leq N$. In all subsequent {\em Steps}, one may always work componentwise.
\end{proof}

The content of the proof of Theorem \ref{th:maxfibre} gives rise to an interesting observation:

\begin{corollary}
Each connected components of $ M^{red,j} \setminus \bigl(H^{red,j}_t\bigr)^{-1}(h)$ contains at least one singular point of $H_t^{red,j}$. 
\end{corollary}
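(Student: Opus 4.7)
The plan is to essentially extract Step 2 of the proof of Theorem \ref{th:maxfibre} as the standalone justification for this corollary. The idea is that the connected components (the ``faces'') of $M^{red,j} \setminus (H^{red,j}_t)^{-1}(h)$ are open subsets of the compact space $M^{red,j}$, and the boundary of each face is contained in the level set $(H^{red,j}_t)^{-1}(h)$ where $H^{red,j}_t$ takes the constant value $h$. A standard extremum argument on the closure will then force either constancy or an interior critical point.

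First I would fix a face $\mathcal{C}$ and pass to its closure $\overline{\mathcal{C}} \subset M^{red,j}$. Since $M^{red,j}$ is homeomorphic to a $2$-sphere by the discussion in Section \ref{sec:Reducedmanifold}, $\overline{\mathcal{C}}$ is compact, so $H^{red,j}_t$ attains its maximum and minimum on $\overline{\mathcal{C}}$. By definition of a face, the topological boundary satisfies $\overline{\mathcal{C}} \setminus \mathcal{C} \subset (H^{red,j}_t)^{-1}(h)$, hence $H^{red,j}_t \equiv h$ on this boundary. Therefore either $H^{red,j}_t$ is constant equal to $h$ on all of $\overline{\mathcal{C}}$, or at least one of the extrema is attained at an interior point of $\mathcal{C}$.

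In the latter case, at any point where $H^{red,j}_t$ is smooth, an interior extremum is automatically a critical point, so one obtains a singular point of $H^{red,j}_t$ inside $\mathcal{C}$. The only points at which $H^{red,j}_t$ can fail to be smooth on $M^{red,j}$ are the images of the invariant singular points from Theorem \ref{the:invariablesingpoint} (which descend to the singular loci $j \in \{1,2\}$ of the reduction); but such points are themselves singular points of $H^{red,j}_t$ in the relevant stratified sense, so they still count. In the former case (constancy on $\overline{\mathcal{C}}$), $\mathcal{C}$ is an open subset of the level set $(H^{red,j}_t)^{-1}(h)$, which is impossible unless $\mathcal{C}$ is empty, since a hyperbolic-regular level set in a $2$-dimensional reduced space has empty interior; alternatively, any point of such a nonempty open $\mathcal{C}$ would already be a singular point, as every point there is a critical point of $H^{red,j}_t$.

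The main obstacle I expect is the mildly delicate handling of the non-smooth points on $M^{red,j}$ for $j \in \{1,2\}$, where the reduced space is only homeomorphic (not diffeomorphic) to $\mathbb{S}^2$; one has to be careful that ``critical point'' still makes sense there, and that the invariant singular points listed in Theorem \ref{the:invariablesingpoint} are precisely accounted for so that the extremum argument does not produce a fake ``extremum at a non-singular non-smooth point''. Everything else is a direct compactness-plus-boundary-values argument.
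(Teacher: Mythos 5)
Your proposal is correct and is essentially the paper's own argument: the paper proves this corollary by pointing to \emph{Step 2} of the proof of Theorem \ref{th:maxfibre}, which is exactly your compactness-of-the-closure, boundary-values-equal-to-$h$, interior-extremum argument, with the non-smooth points identified as the invariant singular points of Theorem \ref{the:invariablesingpoint}. Your extra care with the case where $H^{red,j}_t$ is constant on $\overline{\mathcal{C}}$ only makes explicit what the paper leaves implicit, so there is nothing to add.
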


\begin{proof}
 See {\em Step 2} in the proof of Theorem \ref{th:maxfibre}.
\end{proof}


\section{Bifurcations of singular points}
\label{sec:singbif}

In this section, we will study how the position and type of singular points of $(J, H_t)$ change when the parameter $t=(t_1,t_2,t_3,t_4)$ varies. Here both the rank zero singular points and the rank one singular points display interesting behavior. We will not only consider non-degenerate but also degenerate singular points.


\subsection{Bifurcation diagram}

Given a $4$-dimensional completely integrable system $(M, \om, \mcF)$, the image of the momentum map $\mcF(M)$ is a subset of $\R^2$. When decorated with the singular values of the system, we rather refer to it as {\em bifurcation diagram}.   
The {\em leaf space} of $(M, \om, \mcF)$ is the space $\faktor{M}{\sim}$, where $p \sim q$ if and only if $p$ and $q$ belong to the same connected component of a fibre of $\mcF$. 

In the classification of toric and semitoric systems, the image of the momentum map plays a crucial role: for toric systems, it is the only invariant and, for semitoric systems, it is used to construct the so-called polygon invariant. For hypersemitoric systems or even more general $\mbS^1$-systems, there is not yet any symplectic classification in the spirit of the ones for toric and semitoric systems. In particular, it is not yet clear how the image of the momentum map would/will be incorporated: The main problem is that fibres of hypersemitoric systems or even more general $\mbS^1$-systems are not necessarily connected.

Since the fibres of toric and semitoric systems on connected manifolds are connected, the image of the momentum map can be identified with the leaf space.

\begin{definition}
\label{de:unfbifd}
    An {\em unfolded bifurcation diagram (short UBD)} of a $4$-dimensional completely integrable system $(M, \om, \mathcal{F})$ is a path connected topological space $\mathcal{UBD}$ together with a projection $\pi:\mathcal{UBD} \rightarrow \mathcal{F}(M)$ such that, for all $r \in \R^2$, the number of points in $\pi^{-1}(r)$ equals the number of connected components of $\mathcal{F}^{-1}(r)$.
\end{definition}

If there exists a continuous embedding $i:\mathcal{F}(M) \hookrightarrow M$ that is a right inverse of $\mathcal{F}$ then the leaf space forms a {\em canonical} unfolded bifurcation diagram. 

Unfolded bifurcation diagrams and leaf spaces are essential tools when studying the dynamics of hypersemitoric and proper $\mathbb{S}^1$-systems since they allow for visualization of features that the image of the momentum map cannot provide (for instance displaying so-called flaps and swallowtails, see Figure \ref{fig:flapAndSwallow} and Section \ref{subsec:flaps}).


\subsection{Flaps and swallowtails}
\label{subsec:flaps}

Intuitively, a flap (cf.\ Figure \ref{fig:unfolded flap} and Figure \ref{fig:flapInBifurDiagr}) can be thought of as triangular shaped momentum map image of a toric system that is attached to another integrable system along one side consisting of hyperbolic-regular values and ending in parabolic values. The other two sides of the triangle consist of elliptic-regular values that become an elliptic-elliptic value at the common vertex of these sides.

Thus flaps are easier visualised in an {\em unfolded} bifurcation diagram than just in the usual bifurcation diagram. The hyperbolic-regular values are thus located where the unfolded bifurcation diagram splits into different branches.
On the image of the momentum map, a flap looks like in Figure \ref{fig:flapInBifurDiagr}.

\begin{figure}[h]
\centering
    \includegraphics[scale = 0.5]{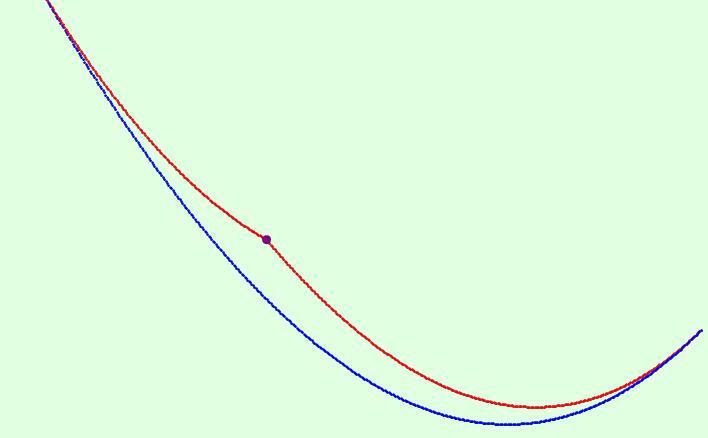}
\caption{This plot shows a flap in the bifurcation diagram of $(J,H_t)$ for $t=\bigl(\frac{7}{20},\frac{7}{20},\frac{7}{30}, \frac{7}{10}\bigr)$. The blue line consists of hyperbolic-regular values, the red lines of elliptic-regular values and the dark dot is an elliptic-elliptic value. The elliptic-elliptic point is $\left[\sqrt{2}, \ 0, \ 0, \sqrt{2}, \ 2, \ 2 \sqrt{2}, \sqrt{6}, \sqrt{6}\right]$.}
\label{fig:flapInBifurDiagr}
\end{figure}

\begin{example}
    \label{ex:singleflap}
For the parameter value $t=\left(\frac{7}{20},\frac{7}{20},\frac{7}{30}, \frac{7}{10} \right)$, the system $(J, H_t)$ has the flap plotted in Figure \ref{fig:flapInBifurDiagr}.
\end{example}

Dullin $\&$ Pelayo \cite{Dulinhyper} show how to create a small flap from a focus-focus point in a $4$-dimensional integrable system in the presence of an $\mbS^1$-action. Flaps play an essential role in Hohloch $\&$ Palmer \cite{hohloch2021extending} when extending $\mbS^1$-actions to (as nice as possible) $4$-dimensional integrable systems.
Note that, by using so-called blow-up or cutting techniques (see for instance Hohloch $\&$ Palmer \cite{hohloch2021extending}), a flap can be modified to contain several elliptic-elliptic points. This has a natural impact on its shape in the sense that the flap gets more vertices, see Figure \ref{fig:doubleflapunf}.

\begin{figure}[h]
    \centering
    \includegraphics[trim={3.5cm 4.5cm 3.5cm 3.5cm},clip,scale=.45]{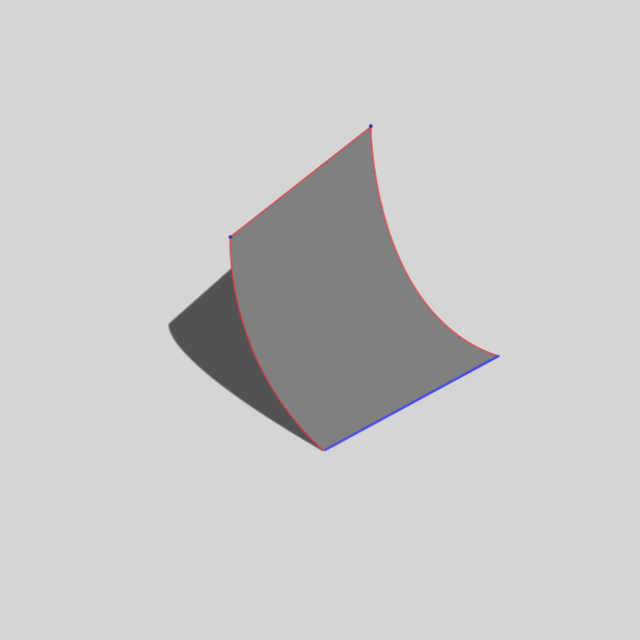}
    \caption{A flap displayed via the unfolded bifurcation diagram with two elliptic-elliptic values as vertices along the green line consisting of elliptic-regular values.}
    \label{fig:doubleflapunf}
\end{figure}

We will now see that the family $(J, H_t)$ contains, for certain values of $t$, flaps of various shapes:

\begin{example}
\label{ex:doubleflap}
For the parameter value $t=\left(\frac{ 9}{20},\frac{9}{20},\frac{9}{30}, \frac{9}{10} \right)$, the system $(J, H_t)$ has a flap with two elliptic-elliptic vertices as plotted in Figure \ref{fig:Bigflap}. This flap appears as the result of the collision of two flaps.
\end{example}

De Meulenaere $\&$ Hohloch \cite{meulenaere2019family} perturb the toric octagon system $(J, H)$ by means of a 1-dimensional family of parameters to generate focus-focus points from elliptic-elliptic ones and to observe the collision of two fibres with one focus-focus point contained in each in order to create one fibre containing 2 focus-focus points. Since their perturbation is included as $t=(t_1, 0, 0, 0) \in \R \times\ \{0\} \times\ \{0\}\times\ \{0\}$ in our more general family parametrised by $t=(t_1, t_2, t_3, t_4 ) \in \R^4$ it is only natural to expect the creation of focus-focus points also in our family $(J, H_t)$ for certain values of $t$.

\begin{figure}[h]
    \centering
    \includegraphics[scale = 1.]{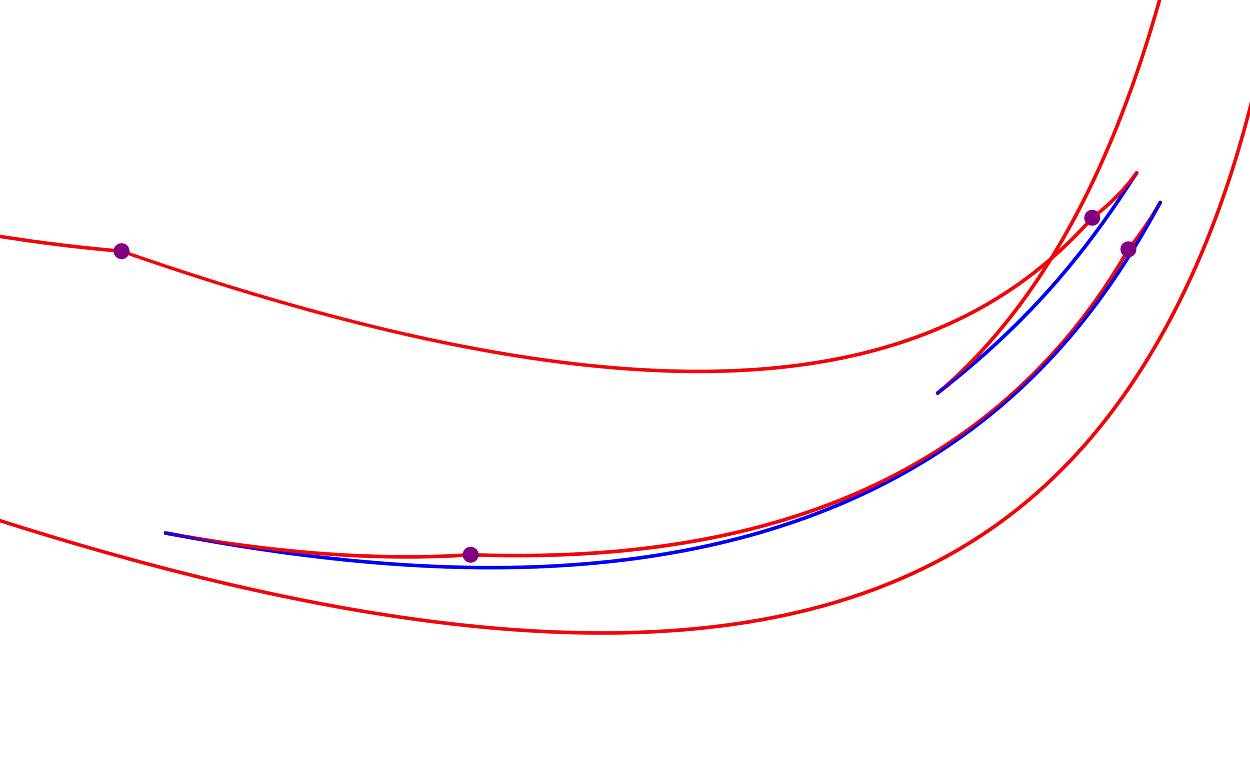}
    \caption{Part of the bifurcation diagram of $(J, H_t)$ for $t=\left(\frac{ 9}{20},\frac{9}{20},\frac{9}{30}, \frac{9}{10} \right)$ where two smaller flaps collided and a flap with two elliptic-elliptic vortices appears. The red lines consist of elliptic-regular values and the blue ones of hyperbolic-regular values and the dots mark elliptic-elliptic values.}
    \label{fig:Bigflap}
\end{figure}

In our system $(J, H_t)$, singular points with $J$-value equal to zero or three can never be of focus-focus type since their image never lies in the interior of the image of the momentum map (cf.\ Theorem \ref{th:locNF}).

Transitions between elliptic-elliptic and focus-focus singularities are often called {\em Hamiltonian Hopf bifurcations} and are, for instance, discussed in Van der Meer \cite{vandermeer1985hamiltonian}: Such a bifurcation is called {\em supercritical} when the focus-focus value is generated by passing of an elliptic-elliptic value from a boundary of the bifurcation diagram (see Figure \ref{fig:transfocfoc}) to the interior and {\em subcritical} when the focus-focus point transitions to an `elliptic-elliptic point on a flap' (see Figure \ref{fig:transhyp}).

\begin{figure}[h]
    \centering
\begin{subfigure}[t]{.98\textwidth}
\centering
   \includegraphics[scale = 1.]{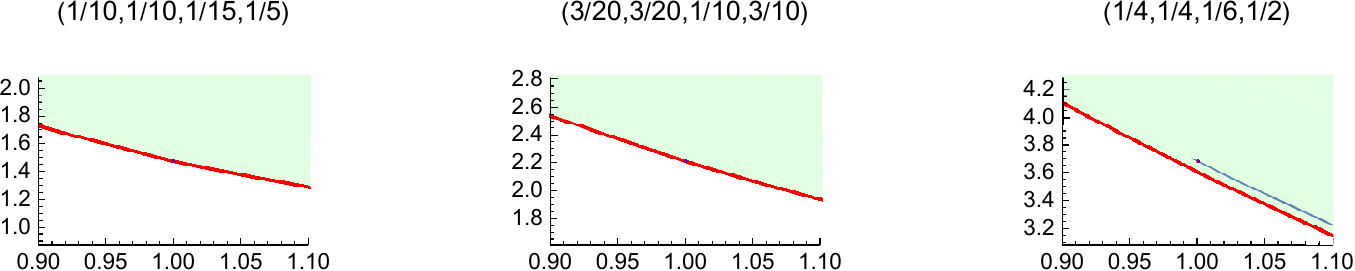}
    \caption{Transition from elliptic-elliptic to focus-focus.}
    \label{fig:transfocfoc}
 \end{subfigure}
\vspace{4mm}
 
\begin{subfigure}[t]{.98\textwidth}
\centering
\includegraphics[scale = 1.]{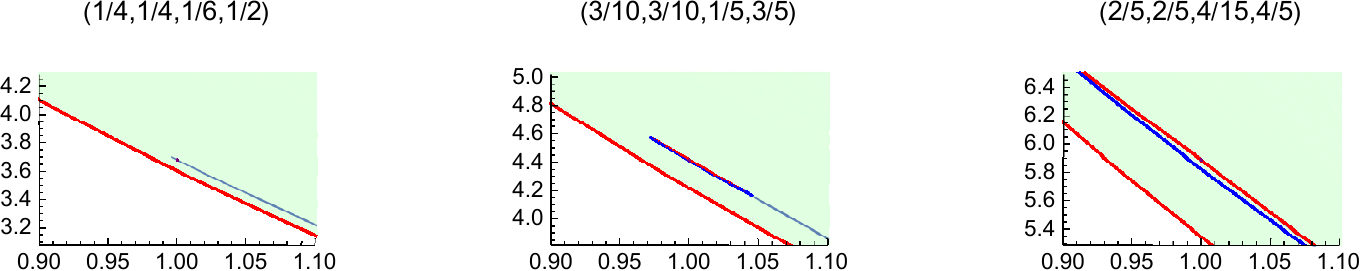}
\caption{Transition from focus-focus to elliptic-elliptic with flap.}
 \label{fig:transhyp}
\end{subfigure}

\caption{Transitions of the point $\phi_2(0,0)$. The associated values of the parameter $t=(t_1,t_2,t_3,t_4)$ are displayed above each subfigure.}
\label{fig:transitionsWithParam}
\end{figure}

Now we consider an example where all the invariant singular points become focus-focus type for a certain interval and then, when they transition back to elliptic-elliptic points, in addition a flap will be attached to them.

\begin{example}
Let $p:=\left[\sqrt{2}, \ 0, \ 0, \sqrt{2}, \ 2, \ 2 \sqrt{2}, \sqrt{6}, \sqrt{6}\right]$ (which is an invariant singular point) and consider the 1-parameter subfamily $\left(J,H_{\left(\frac{\tau}{2},\frac{\tau}{2},\frac{\tau}{3}, \tau \right)} \right)$ for $\tau \in \R$. 
For $\tau < \frac{25}{69}$, the point $p$ is elliptic-elliptic and transitions at $\tau = \frac{25}{69}$ to a focus-focus point (see Figure \ref{fig:transfocfoc}). Eventually at $\tau = \frac{5}{9}$, the point $p$ transitions to a elliptic-elliptic point but with a flap attached (see Figure \ref{fig:transhyp}). 
\end{example}

\begin{proof}
 To find the values of $\tau$ such that $\phi_2(0,0)$ is a degenerate singular point we calculate the eigenvalues of $\omega^{-1}_{\phi_2(0,0)}d^2 H_{\left(\frac{\tau}{2},\frac{\tau}{2},\frac{\tau}{3}, \tau \right)}$ at $\phi_2(0,0)$. These eigenvalues are given by
 {\small
 \begin{align*}
 \lambda_1 &= -\frac{1}{25} \sqrt{-\frac{625}{2} + 6000 \tau - \frac{91017 \tau^2}{2} + \left(
  \frac{25}{2}  - \frac{423}{2} \tau \right) \sqrt{625 - 2850 \tau + 3105 \tau^2}},\\
  \lambda_2 &= \frac{1}{25} \sqrt{-\frac{625}{2} + 6000 \tau - \frac{91017 \tau^2}{2} + 
  \left(  \frac{25}{2} - \frac{423}{2} \tau \right) \sqrt{625 - 2850 \tau + 3105 \tau^2}},\\
  \lambda_3 &= -\frac{1}{25} \sqrt{-\frac{625}{2} + 6000 \tau - \frac{91017 \tau^2}{2} -  
  \left( \frac{25}{2} -  \frac{423}{2} \tau \right) \sqrt{625 - 2850 \tau + 3105 \tau^2}},\\
  \lambda_4 &= \frac{1}{25} \sqrt{-\frac{625}{2} + 6000 \tau - \frac{91017 \tau^2}{2}
  - \left(  \frac{25}{2} - \frac{423}{2} \tau \right) \sqrt{625 - 2850 \tau + 3105 \tau^2}}.
 \end{align*}
}
These eigenvalues only vanish for parameter value $\tau =0$ where the system actually satisfies $(J,H_0)=(J, H)$, where, due to the contribution of $d^2 J$, the point remains nondegenerate. Thus $\phi_2(0,0)$ can only be degenerate if two or more of these eigenvalues coincide.
This is equivalent with $625 - 2850 \tau + 3105 \tau^2 =0$ of which the solutions are $\tau = \frac{25}{69}$ and $\tau=\frac{5}{9}$.

To verify that these two values corresponds to degenerate singularities we calculate the eigenvalues of $\om^{-1}(c_1 d^2J+ c_2 d^2H_t )$. The eigenvalues for $\tau = \frac{25}{69}$ are 
$$ \frac{\sqrt{-1250 c_1^2 + \frac{205000 c_1 c_2}{23} - \frac{8405000 c_2^2}{529}}}{25 \sqrt{2}}
\quad \mbox{and} \quad 
-\frac{\sqrt{-1250 c_1^2 + \frac{205000 c_1 c_2}{23} - \frac{8405000 c_2^2}{529}}}{25 \sqrt{2}}
$$
and for $\tau = \frac{5}{9}$ 
$$
\frac{\sqrt{-1250 c_1^2 + 13000 c_1 c_2 - 33800 c_2^2}}{ 25 \sqrt{2}}
\quad \mbox{and} \quad
-\frac{\sqrt{-1250 c_1^2 + 13000 c_1 c_2 - 33800 c_2^2}}{ 25 \sqrt{2}}.
$$
All the above eigenvalues appear in multiplicity two, thus proving degeneracy.
\end{proof}

Now consider the following statement which is of more general nature:

\begin{proposition}
\label{prop:noHypEll}
The invariant singular points of rank zero $\phi_2(0,0),\phi_3(0,0),\phi_6(0,0)$ and $\phi_7(0,0)$ from Theorem \ref{the:invariablesingpoint} can never be of hyperbolic-elliptic type, i.e., there is no parameter value $t \in \R^4$ such that these points are hyperbolic-elliptic fixed points of $(J, H_t)$.
\end{proposition}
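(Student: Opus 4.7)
The plan is to compute, at each invariant singular point $p \in \{\phi_2(0,0),\ \phi_3(0,0),\ \phi_6(0,0),\ \phi_7(0,0)\}$, the determinant of the matrix $c_1 A + c_2 B$, where $A := \om_p^{-1} d^2 J(p)$ and $B := \om_p^{-1} d^2 H_t(p)$, and to show it is non-negative for every $(c_1,c_2)\in\R^2$. Since a hyperbolic-elliptic linearisation has eigenvalues $\{\pm i\alpha',\pm\beta'\}$ and hence satisfies $\det = -(\alpha')^2(\beta')^2<0$, this will rule out hyperbolic-elliptic type for every $t$.

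First I would use the manifold equations of Section~\ref{section:coordCharts} to express $|z_1|^2$ in terms of the chart coordinates $(x,y,u,v)$, and verify that at each of the four invariant points one has $J \circ \phi_\nu = c_\nu \pm \tfrac{1}{2}(x^2+y^2-u^2-v^2)$ for a constant $c_\nu\in\{1,2\}$. By Lemma~\ref{lem:sympform} the matrix $A$ has eigenvalues $\pm i$, each of multiplicity two, and the linearised $\mbS^1$-action on $T_p M \cong \C^2$ rotates the two local complex coordinates $w_1 := x+iy$ and $w_2 := u+iv$ with opposite unit weights. Since $\{J, H_t\} = 0$ and $p$ is a fixed point of $\Phi^J_s$, the Hessian $d^2 H_t(p)$ is invariant under the conjugation action of $\exp(sA)$. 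Consequently it is a real linear combination of the Hessians of the four $\mbS^1$-invariant real quadratic monomials $|w_1|^2$, $|w_2|^2$, $\Re(w_1 w_2)$, $\Im(w_1 w_2)$, with coefficients $\alpha,\beta,\gamma,\delta \in \R$ depending on $t$. A direct cofactor expansion along the first row then gives
\begin{equation*}
 \det\bigl(c_1 A + c_2 B\bigr)\ =\ \bigl(PQ + R^2 + S^2\bigr)^{2},
\end{equation*}
with $P := c_1 + 2c_2\alpha$, $Q := c_1 - 2c_2\beta$, $R := c_2\gamma$, $S := c_2\delta$, which is manifestly non-negative.

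To conclude, every matrix in $\mathfrak{sp}(4,\R)$ has its eigenvalues in pairs $\pm\lambda_1,\pm\lambda_2$, so $\det = \lambda_1^2\lambda_2^2$; a hyperbolic-elliptic spectrum forces this product to be negative, contradicting the identity above. For $\phi_6(0,0)$ and $\phi_7(0,0)$, the sign convention in $J \circ \phi_\nu$ is flipped so that $A$ is replaced by $-A$, but this is absorbed by the reparametrisation $c_1 \mapsto -c_1$ in the pencil, so the same non-negativity holds. The main obstacle is the determinant identity itself: it can either be grinded out by the $4 \times 4$ cofactor expansion (each of the three relevant $3 \times 3$ minors factors neatly through $PQ + R^2 + S^2$) or, more conceptually, obtained by diagonalising the commuting pair $A, B$ simultaneously over $\C$, which shows that the four eigenvalues of $c_1 A + c_2 B$ are $\pm c_1 i + c_2 \mu_k$ for $k=1,2$, paired by complex conjugation, so that $\det(c_1 A + c_2 B) = |c_1 i + c_2 \mu_1|^2 \, |c_1 i + c_2 \mu_2|^2 \geq 0$ is automatic.
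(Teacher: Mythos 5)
Your proposal is correct and follows the same overall strategy as the paper: a hyperbolic-elliptic point would force $\det\bigl(\om_p^{-1}(c_1\,d^2J(p)+c_2\,d^2H_t(p))\bigr)=-\al^2\be^2<0$ for the defining linear combination, so it suffices to show this determinant is non-negative for all $(c_1,c_2)$ and all $t$. Where you differ is in how the non-negativity is obtained. The paper simply evaluates the determinant at the four invariant points by explicit ({\em Mathematica}) computation and observes that it equals $\tfrac{1}{390625}\,f_k(t,c_1,c_2)^2\geq 0$ for explicit polynomials $f_k$; no structural explanation is given for why a perfect square appears. You instead derive the square structurally: since $J\circ\phi_\nu$ is, up to a constant and an overall sign, exactly $\tfrac12(x^2+y^2-u^2-v^2)$ and the chart symplectic form is standard (Lemma \ref{lem:sympform}), the linearised $\mbS^1$-action at $p$ has weights $(+1,-1)$, the $\mbS^1$-invariance of $H_t$ (a consequence of $\{J,H_t\}=0$) together with $dH_t(p)=0$ forces $d^2H_t(p)$ into the four-dimensional space of invariant quadratics spanned by $|w_1|^2,|w_2|^2,\Re(w_1w_2),\Im(w_1w_2)$, and the resulting block structure makes the pencil determinant a perfect square; I checked the block computation and it indeed gives $\bigl(PQ+R^2+S^2\bigr)^2$ up to your sign conventions. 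This buys more than the paper's argument: it works for \emph{any} $\mbS^1$-invariant perturbation of $H$, shows in passing that a fixed point at which both weights of the $\mbS^1$-action are non-zero can only be elliptic-elliptic, focus-focus or degenerate, and explains why the paper's $f_k$ appear squared; the paper's route, in exchange, needs no invariance lemma and delivers the explicit polynomials $f_k$, which may be useful elsewhere. One small caveat in your alternative "conceptual" ending: $B=\om_p^{-1}d^2H_t(p)$ need not be diagonalisable, so you should phrase it as restricting $B$ to the two (conjugate, $B$-invariant) eigenspaces of $A$ and triangularising there; the conjugate-pairing of eigenvalues and the conclusion $\det=|c_1 i+c_2\mu_1|^2\,|c_1 i+c_2\mu_2|^2\geq 0$ then go through unchanged, and in any case your block-determinant identity already closes the proof on its own.
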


\begin{proof}
Recall from the discussion after Theorem \ref{th:locNF} that a non-degenerate singular point is hyperbolic-elliptic if the eigenvalues of 
$\om^{-1}(c_1 d^2J+ c_2 d^2H_t )$ are of the form $\alpha$, $-\alpha$, $i \beta$,  $- i \beta$ with $\al, \be \in \mathbb{R}^{\neq 0}$. 
Thus $\det\left(\om^{-1}(c_1 d^2J+ c_2 d^2H_t )\right)= -\al^2 \be^2 <0$ is strictly negative. 

Now consider $p \in \{\phi_2(0,0),\phi_3(0,0),\phi_6(0,0), \phi_7(0,0) \}$ and find the value of the determinant $\det\left(\om^{-1}_p \bigl(c_1 d^2J(p)+ c_2 d^2H_t(p) \bigr)\right)$. We get for $k \in \{2, 3, 6, 7\}$
\begin{align*}
    \det\left(\om^{-1}_{\phi_k(0,0)}  \bigl(c_1 d^2J+ c_2 d^2H_t \bigr)(\phi_k(0,0)) \right)
    & = \frac{1}{390625} \ \bigl(f_k(t_1,t_2,t_3,t_4,c_1,c_2)\bigr)^2 \geq 0
\end{align*}    
where $f_k$ are real polynomials for $k \in \{2, 3, 6, 7\}$ given by
\begin{align*}
   f_2 (t_1,t_2,t_3,t_4,c_1,c_2) 
   & = 625 c_1^2+25 c_1 c_2 (-25 +50 t_1-256t_2-384t_3-192 t_4)  \\
   & \quad +48 c_2 ^2(3t_1^2+320t_2^2+75t_3+720t_3^2  +75t_4+864t_3t_4 \\
   & \quad +144t_4^2  + t_2(50+960t_3+576t_4)-50t_1(2t_2+3(t_3+t_4))) , \\
 f_3(t_1,t_2,t_3,t_4,c_1,c_2)  
    &= 625 c_1^2-25 c_1 c_2 (-25+50t_1+168t_4) \\
    & \quad  +72 c_2^2(2t_1^2+50t_1t_4+t_4(-25+96t_4)), \\   
f_6(t_1,t_2,t_3,t_4,c_1,c_2)    
    &= 625 c_1^2+25 c_1 c_2 (-25+50t_1-432t_4) \\
    & \quad +48 c_2^2(3t_1^2-200t_1t_2+20t_2(5+48t_2)), \\
f_7(t_1,t_2,t_3,t_4,c_1,c_2)    
   &= 625 c_1^2 - 125 c_1 c_2 (-5 + 10 t_1 + 256 t_2)  \\
   & \quad +   48 c_2^2 (3 t_1^2 + 800 t_1 t_2 + 16 t_2 (-25 + 512 t_2)).
\end{align*}
Note that, for all $k \in\{2, 3, 6, 7\}$, the determinant above is always non-negative. Therefore $\phi_2(0,0)$, $\phi_3(0,0)$, $\phi_6(0,0)$ and $\phi_7(0,0)$ can never be of hyperbolic-elliptic type.
\end{proof}

After discussing flaps, let us now consider another bifurcation phenomenon. Intuitively, a {\em swallowtail} (or {\em pleat}) of a $4$-dimensional completely integrable system is created by `pulling a part of the elliptic-regular boundary' of the image of the momentum map over another part of the `elliptic-regular boundary' resulting in the situation displayed in Figure \ref{fig:swallowtail}.

\begin{figure}[h]
    \centering
    \includegraphics[angle = 0,scale = 1.21]{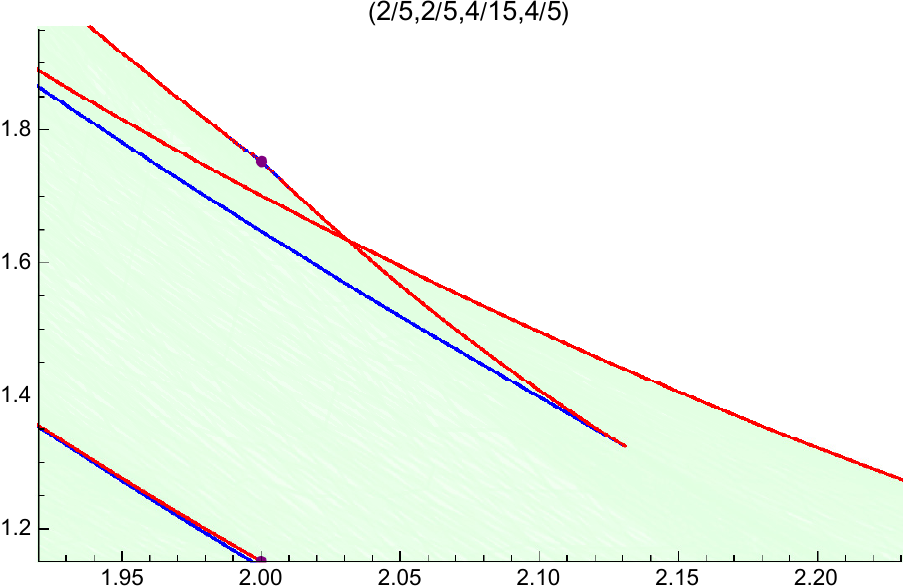}
    \caption{A swallowtail: red lines consist of elliptic-regular values and the blue one of hyperbolic-regular values. There is no elliptic-elliptic value where the two red lines pass over each other.}
    \label{fig:swallowtail}
\end{figure}

In a swallowtail, the point where the lines of elliptic-regular values seem to intersect in the bifurcation diagram is no elliptic-elliptic value but just the overlapping of two elliptic-regular values, i.e., in the unfolded bifurcation diagram, there is no intersection at all. The alternative name {\em pleat} becomes clear when considering a swallowtail in the unfolded bifurcation diagram as drawn in Figure \ref{fig:swallowtailunf}. 
Swallowtails are discussed in more detail in the works by Sadovskii $\&$ Zhilinskii \cite{swallowtailsSadov} and Efstathiou $\&$ Sugney \cite{Efstathiou_2010}.
Our family $(J, H_t)$ contains swallowtails for certain values of the parameter $t$:

\begin{example}
\label{ex:swallowtail}
 In $(J, H_t)$, swallowtails appear as plotted in Figure \ref{fig:swallowtailgeneration} in the 1-parameter family $\tau \mapsto t(\tau):= \left( \frac{\tau}{2} , \frac{\tau}{2} , \frac{\tau}{3} ,\tau \right)$. The swallowtail is found by observing the boundary of the bifurcation diagram and tracing where it gets pulled over itself. 
\end{example}

\begin{figure}[h]
    \centering
    \includegraphics[scale = 1.1]{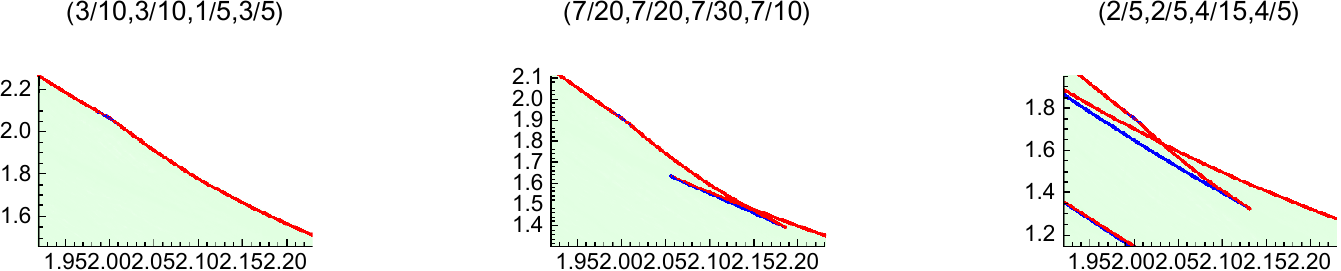}
    \caption{A swallowtail appearing along the family $\tau \mapsto t(\tau):= \left( \frac{\tau}{2} , \frac{\tau}{2} , \frac{\tau}{3} ,\tau \right)$.}
    \label{fig:swallowtailgeneration}
\end{figure}

Now we are interested in what happens when a flap and a swallowtail collide. In particular, we would like to know if this collision result in a swallowtail, a flap or maybe something entirely different.

\begin{example}
\label{ex:flapcol}
 Looking at Example \ref{ex:doubleflap} and Figure \ref{fig:Bigflap}, we see that there two smaller flaps collided and, after the collision, formed a bigger flap with more vertices. Later on, the swallowtail will meet the flap and the bifurcation diagram becomes very complicated, see Figure \ref{fig:collisions}. This collision happens between the parameter values $t_l = \bigl(\frac{119}{200},\frac{119}{200},\frac{119}{300},\frac{119}{100}\bigr)$ and $t_r = \bigl(\frac{3}{5},\frac{3}{5},\frac{2}{5},\frac{6}{5}\bigr)$.
\end{example}

\begin{figure}[h]
    \centering
    \includegraphics[scale=.07]{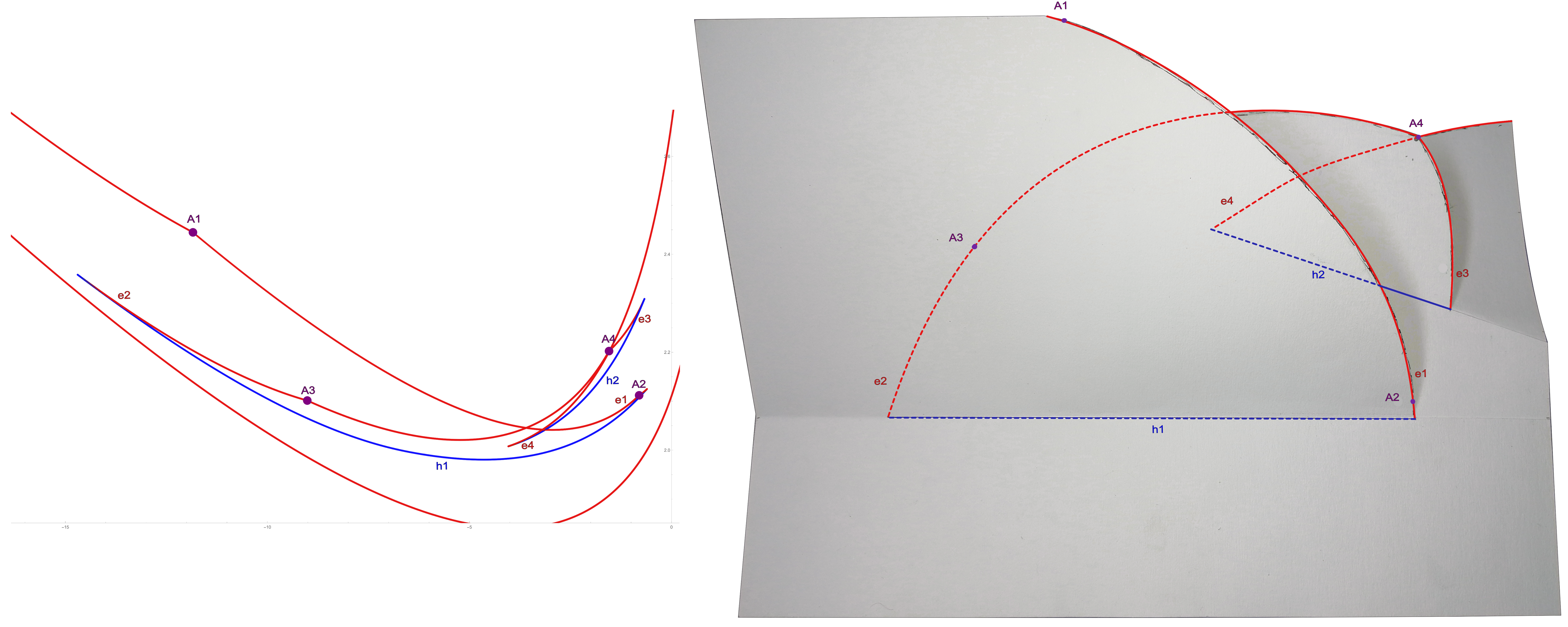}
    \caption{On the right, a schematic sketch of a part of the unfolded bifurcation diagram at $t=\left( \frac{3}{5}, \frac{3}{5}, \frac{2}{5},\frac{6}{5}\right)$ of $(J,H_t)$. On the left, a {\em Mathematica} plot of the associated singular points.}
    \label{fig:collisions}
\end{figure}


\subsection{Occurence of rank one singular points}

Recall that, on four-dimensional manifolds, there are only two types of non-degenerate rank one singular points possible, namely elliptic-regular and/or hyperbolic-regular ones. 
We will now show that both types appear for certain values of $t \in \R^4$ in $(J, H_t)$.

\begin{example}
 For $t=\left(\frac{7}{20}, \frac{7}{20},\frac{7}{30},\frac{7}{10} \right)$, the system $(J, H_t)$ has (among others) elliptic-regular and hyperbolic-regular singular points as sketched in Figure \ref{fig:singpoint}.
\end{example}

\begin{proof}
Fill the parameter values $t=\left(\frac{7}{20}, \frac{7}{20},\frac{7}{30},\frac{7}{10} \right)$ into equation \ref{eq:finalsing} which leads to 
 \begin{align*}
     0 = (\partial_{r_2}\Gamma_{\left(\frac{7}{20}, \frac{7}{20},\frac{7}{30},\frac{7}{10} \right)}(r_1,r_2))^2 - \left(\frac{7}{20}\right)^2(\partial_{r_2}\overline{\Omega}(r_1,r_2))^2.
 \end{align*}
Now calculate its zeros with {\em Mathematica}. This yields singular points which, by Theorem \ref{prop:dj}, lie on the hypersurface $u=0$. The situation is plotted in Figure \ref{fig:singpoint}.
\end{proof}

\begin{figure}[h]
    \centering
    \includegraphics[scale = 1.265]{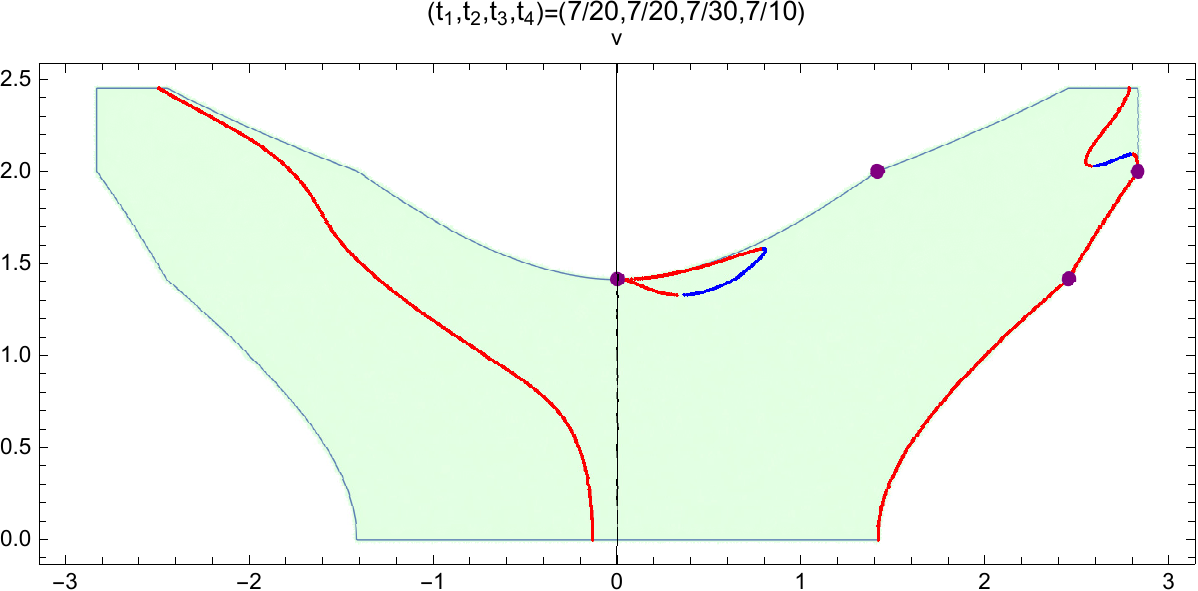}
    \caption{We consider the system for $t=\left(\frac{7}{20},\frac{7}{20}, \frac{7}{30}, \frac{7}{10} \right)$. The light green set is a plot of $\left(\phi_1^{-1}(U_1) \cap \{u=0\}\right) \slash \mbS^1$. The choice of the section $\{u=0\}$ is due to Lemma \ref{lem:omega}. The set of singular points is disconnected and consists of three connected components (one focus-focus point, one `loop' of different types of singular points, all remaining singular points). More precisely, the red lines represent the elliptic-regular points, the blue lines the hyperbolic-regular points, and the purple points are the invariant singular points.}
    \label{fig:singpoint}
\end{figure}


\subsection{Occurence of degenerate singular points}

In this subsection, we turn our attention to degenerate singular points. 
In hypersemitoric systems, the only admissible type of degenerate singular points are parabolic ones.
We will now show that $(J, H_t)$ contains, for certain $t \in \R^4$, degenerate points that are not parabolic. The family $(J,H_{(t_1,0,0,0)})$ with $t_1 \in \R$ coincides in fact with the perturbation of $(J, H)$ used in De Meulenaere $\&$ Hohloch \cite{meulenaere2019family}, but there the type of degeneracy was not investigated.

\begin{example}
For $t_1=\frac{25}{74}$ and $t_1=\frac{25}{26}$, the point $\phi_2(0,0)$ is a degenerate singular point of rank zero of $(J,H_{(t_1,0,0,0)})$ and therefore in particular not parabolic.
\end{example}

\begin{proof}
According to Theorem \ref{the:invariablesingpoint}, the point $\phi_2(0,0)$ is a rank zero singular point. Now we check if it is (non-)degenerate. To this aim, calculate the eigenvalues of
\begin{align*}
  \om^{-1}_{\phi_2(0,0)}\left(c_1 d^2 J + c_2 d^2H_{\left(\frac{25}{74},0,0,0\right)}\right)\bigl(\phi_2(0,0)\bigr) 
\end{align*}
which turn out to be, each with multiplicity two, 
\begin{align*}
    \pm \frac{\sqrt{-36 c_2^2 + 444 c_1 c_2 -1369 c_1^2}}{37} .
\end{align*}
Since the multiplicity of these eigenvalues is two and the eigenvalues of a non-degenerate point all have to be distinct it follows that $\phi_2(0,0)$ is degenerate for $\left(\frac{25}{74},0,0,0\right) \in \R^4$.
Similarly, the eigenvalues of
\begin{align*}
  \om^{-1}_{\phi_2(0,0)}\left(c_1 d^2 J + c_2 d^2H_{\left(\frac{25}{26},0,0,0\right)}\right)\bigl(\phi_2(0,0)\bigr) 
\end{align*}
turn out to be, each with multiplicity two, 
\begin{align*}
    \pm \frac{\sqrt{-36 c_2^2-156 c_1 c_2 -169 c_1^2}}{13} .
\end{align*}
Since the multiplicity of the eigenvalues is two it follows as above that $\phi_2(0,0)$ is also degenerate for $\left(\frac{25}{26},0,0,0\right) \in \R^4$.
\end{proof}

Therefore there are $t_1 \in \R$ in the family of systems $(J,H_{(t_1,0,0,0)})$, studied in De Meulenaere $\&$ Hohloch \cite{meulenaere2019family}, for which the system is not hypersemitoric. Thus, the same is certainly true for our family $(J, H_t)$ where $t$ varies in $\R^4$.

\begin{remark}
For $t=\left(\frac{7}{20},\frac{7}{20}, \frac{7}{30}, \frac{7}{10} \right)$, the system $(J, H_t)$ displays elliptic-regular points transitioning into hyperbolic-regular points by passing through a degenerate rank one singular point, see Figure \ref{fig:singpoint}. These degenerate points are located where the blue and red line meet.
\end{remark}

Parabolic degenerate points are sometimes also referred to as {\em cusps} which is due to their geometric shape, see Figure \ref{fig:parabolicExample}.

\begin{remark}
For $t=\left(\frac{1}{2}, \frac{1}{2}, \frac{1}{3}, 1 \right)$ and $j=0.664405$, the system $(J, H_t)$ has a loop of parabolic points $\theta \mapsto \phi_1(1.15274 \ e^{\theta i}, \ 0.470033 \ i)$. 
Descended to the (chart $U_1^{red, j}$ of the) reduced space $M^{red, j}$, the loop becomes the blue point at the `cusp' of the red curve in Figure \ref{fig:parabolicExample}.
\end{remark}


\bibliographystyle{alphaurl} 
\bibliography{refs} 

\vspace{3mm}

\noindent
Yannick Gullentops \\
Department of Mathematics \\
University of Antwerp \\
Middelheimlaan 1 \\
B-2020 Antwerp, Belgium \\
{\tt Yannick.Gullentops@uantwerpen.be} 

\vspace{3mm}

\noindent
Sonja Hohloch \\
Department of Mathematics \\
University of Antwerp \\
Middelheimlaan 1 \\
B-2020 Antwerp, Belgium \\
{\tt sonja.hohloch@uantwerpen.be} 

\end{document}